\documentclass[mnsc, nonblindrev]{informs3_hide}
\RequirePackage{tgtermes}

\OneAndAHalfSpacedXII

\usepackage[T1]{fontenc}
\usepackage{algorithm}
\usepackage{algpseudocode}
\usepackage{tikz}
\usepackage{booktabs}
\usepackage{graphicx}
\usepackage{subfigure}
\usepackage{hyperref}
\usepackage{url}
\usepackage{amsmath}
\usepackage{amssymb}
\usepackage{mathtools}
\usepackage[capitalize,noabbrev]{cleveref}
\usepackage[dvipsnames]{xcolor}

\usepackage{natbib}
\bibpunct[, ]{(}{)}{,}{a}{}{,}%
\def\bibfont{\small}%
\EquationsNumberedThrough

\TheoremsNumberedThrough
\ECRepeatTheorems

\usepackage[textsize=tiny]{todonotes}
\newcommand{\I}{\mathbf{1}}  

\MANUSCRIPTNO{IJDS-0001-2024.00}
\begin{document}

\RUNAUTHOR{Fu, Jiang, and Zhou}
\RUNTITLE{Online Bidding for Contextual First-Price Auctions}

\TITLE{Online Bidding for Contextual First-Price Auctions with Budgets under One-Sided Information Feedback}

\ARTICLEAUTHORS{%
\AUTHOR{$\text{Zeng Fu}^{\dag}$,~~$\text{Jiashuo Jiang}^{\ddag}$, ~~$\text{Yuan Zhou}^{\S\P}$}

\AFF{
$\dag~$~Qiuzhen College, Tsinghua University\\
$\ddag$~Department of Industrial Engineering and Decision Analytics, Hong Kong University of Science and Technology\\
$\S$~Yau Mathematical Sciences Center \& Department of Mathematical Sciences, Tsinghua University\\
$\P$~Beijing Institute of Mathematical Sciences and Applications
}
}

\ABSTRACT{%
In this paper, we study the problem of learning to bid in repeated first-price auctions with budget constraints. In each period, the decision maker needs to submit a bid to win the auction and maximize the total collected reward, subject to a budget constraint throughout the horizon. We focus on a one-sided information feedback setting in which the auction-winning bid is revealed after each auction. In our notation, when the learner loses, the auction-winning bid is the highest competing bid $d_t$ and is therefore observed; when the learner wins, the auction-winning bid is her own bid $b_t$, while the highest competing bid $d_t$ remains unobserved. Different from previous papers that assume homogeneous competitors' bids, we assume that the highest bid of other bidders depends on the context of the impression, which is initially unknown and needs to be learned over time. To tackle the learning difficulty, we propose a novel robust regression method based on conditional quantile invariance to learn the contextual parameter. Further combined with a dual update procedure, we develop a new bidding algorithm and prove that our algorithm achieves $\widetilde{O}(\sqrt{T})$ regret, matching the known \(\Omega(\sqrt T)\) lower bound for a non-contextual
non-budgeted special case up to logarithmic factors. We also discuss a structured multi-dimensional extension, where contexts lie on a monotone one-dimensional manifold. This extension illustrates how the quantile-balancing idea can be applied beyond scalar contexts while preserving the ordering structure required by the algorithm.
}
\maketitle


\section{Introduction}\label{sec:introduction}

Driven by the explosive growth of e-commerce, digital advertising has become the dominant marketing force in the global economy, with US spending exceeding \$129 billion in 2019. The engine generating this revenue is the online ad auction. While the truthful Second-Price (Vickrey) auction was historically the industry standard, the ecosystem has recently undergone a paradigm shift toward First-Price auctions. This transition,  completed by major exchanges like AppNexus, OpenX, and Google Ad Manager by 2019, now accounts for the majority of the display ad market~\cite{despotakis2019first}.

This shift fundamentally alters bidder incentives. Unlike in second-price settings, truth-telling is no longer a dominant strategy; bidders must strategically ``shade'' their bids below private values to generate surplus. Furthermore, platforms like Google have introduced new transparency mechanisms, providing the auction-winning bid after each auction, creating a complex ``one-sided feedback'' environment. Despite this, existing research often relies on static competitor distributions, ignoring the rich contextual data (e.g., user demographics) that drives modern pricing. In this paper, we address the challenge of learning to bid in repeated first-price auctions with one-sided information feedback, budgets, contextual competitors, and unknown noise.

\subsection{Problem Formulation}

We consider the online bidding problem for a bidder participating in a population of bidders over a horizon $T$ with total budget $B$. At each time $t=1,\dots,T$, the bidder observes an impression context $x_t$, where $\{x_t\}_{t=1}^T$ are i.i.d. draws from an unknown non-degenerate distribution, and receives a private value $v_t=f(x_t)$, where $f$ is invertible. Based on past observations, the bidder submits a bid $b_t$. Let $d_t$ be the highest bid of other bidders at time $t$.

If $b_t>d_t$, the bidder wins and pays $b_t$; otherwise, the bidder loses and pays nothing. The reward is
\begin{equation}
r_t = (v_t - b_t) \cdot \I\{b_t > d_t\}.
\end{equation}

\textbf{Feedback Model.} We study a one-sided auction-feedback model in which the auction-winning bid is revealed after each round. Since $d_t$ is the highest competing bid, it is observed only when the learner loses. Specifically, if $b_t\leq d_t$, the learner observes $d_t$; if $b_t>d_t$, the learner wins and observes only her own payment. Thus the available feedback is $\left(\mathbf{1}\{b_t>d_t\}, d_t\mathbf{1}\{b_t\leq d_t\}\right)$. This setting models modern exchange transparency mechanisms and is more challenging than the full-information feedback commonly adopted in prior work (e.g., \cite{badanidiyuru2023learning, han2025optimal}).

\textbf{Competitor Model.} We assume competing bids are linearly related to the context plus an independent noise term:
\begin{equation}
d_t = \alpha x_t + z_t, \quad z_t \stackrel{\textit{i.i.d.}}{\sim} G,
\end{equation}
where both $G$ and $\alpha\in\mathbb{R}$ are unknown. The one-sided feedback makes $\alpha$ difficult to estimate and requires a new algorithm for sub-linear regret. Unlike prior contextual auction models that assume competitors' bids are i.i.d. and independent of the context (e.g., \cite{badanidiyuru2023learning}), our model allows contextual dependence.

The expected reward at time $t$ is
\begin{equation}
\mathbb{E}_{v,d}\left[ R_t(x_t, v_t, b_t)\right] = (v_t - b_t) G(b_t - \alpha x_t).
\end{equation}

\textbf{Optimization Goal.} Let $\Pi$ denote the set of all budget-feasible strategies. The optimization problem is
\begin{align}
\max_{\pi \in \Pi} \quad R(\pi) = \mathbb{E}_{v,d}^{\pi} \left[ \sum_{t=1}^T R_t(x_t, v_t, b_t) \right], \qquad
\text{s.t.} \quad \sum_{t=1}^T b_t \cdot \I\{b_t > d_t\} \leq B.
\end{align}

Define $\pi^* = \arg\max_{\pi \in \Pi} R(\pi)$ as our benchmark strategy. The regret is
\begin{equation}
\text{Regret}(\pi) = R(\pi^*) - R(\pi).
\end{equation}

Throughout the paper, we make the following assumptions. In all theoretical
statements involving quantile balancing, we fix the high quantile level $p_0:=0.99.$

\begin{assumption}[Superlinear Growth]
\label{ass:Superlinear}
\(f(x_1)-f(x_2)>\alpha(x_1-x_2)\) for all \(x_1>x_2\), 
\end{assumption}

\begin{assumption}[Regularity of the Noise Distribution]
\label{ass:lipschitz}
Let $G$ denote the cumulative distribution function of $z_t$, and let $g$ denote its density. We assume that $g$ is bounded, bounded away from zero, and locally Lipschitz in a neighborhood of the target quantile $G^{-1}(p_0)$; that is, \(0<g_{\min}\leq g(z)\leq g_{\max}<\infty\) for all $z$ in the support. As a consequence, $G$ is Lipschitz continuous.
\end{assumption}

\begin{assumption}[Boundedness]
\label{ass:bounded}
For all $t=1,\dots,T$, the values, bids, contexts, and noise terms are bounded: $v_t\in[0,\bar v]$, $b_t\in[0,\bar v]$, $x_t\in[0,\bar x]$, and $|z_t|\leq \bar z$. The highest competing bid satisfies $d_t=\alpha x_t+z_t\geq 0$.
\end{assumption}

\begin{assumption}[Local quantile identifiability]
\label{ass:identifiability}
Let \(I_1=\{x\leq \mathrm{median}(x)\}\) and \(I_2=\{x>\mathrm{median}(x)\}\). For any candidate parameter $\tilde\alpha$, let $q_k(\tilde\alpha)$ denote the population $p_0$-quantile of the uncensored residual \(d-\tilde\alpha x\) conditional on $x\in I_k$, for $k=1,2$. There exist constants $\kappa>0$ and $r_0>0$ such that, for all $|\tilde\alpha-\alpha|\leq r_0$, \(\bigl|q_1(\tilde\alpha)-q_2(\tilde\alpha)\bigr|\geq \kappa|\tilde\alpha-\alpha|\).
\end{assumption}

\begin{assumption}[Visibility of high competing bids]
\label{ass:visible-quantile}
There exists a constant \(\Delta_q>0\) such that, for every context
\(x\in[0,\bar x]\),
$
\mathbb P\!\left(d_t\ge f(x)+\Delta_q\mid x_t=x\right)\ge 0.01 .
$

Equivalently, under the model \(d_t=\alpha x_t+z_t\), this condition can be
written as
\[
\mathbb P\!\left(
z_t\ge f(x)-\alpha x+\Delta_q
\right)\ge 0.01,
\qquad
\forall x\in[0,\bar x].
\]
Since \(1-p_0=0.01\), and provided that \(G^{-1}(p_0)\) lies in the
interior of the support of \(G\), the above condition implies
$
G^{-1}(p_0)
\ge
f(x)-\alpha x+\Delta_q,
\qquad
\forall x\in[0,\bar x].
$

\end{assumption}

The above assumptions combine standard regularity conditions with structural
conditions specific to one-sided feedback. Assumption~\ref{ass:Superlinear}
ensures that value growth dominates contextual shifts in competing bids,
consistent with linear contextual bidding models
(e.g., \citet{DBLP:journals/corr/abs-2109-03173}).
Assumptions~\ref{ass:lipschitz} and~\ref{ass:bounded} impose smoothness,
stable quantiles, and compactness, which are standard in online auction
learning
(e.g., \citet{DBLP:journals/corr/abs-2109-03173, wang2023learning}).
Assumption~\ref{ass:identifiability} enables local identification of
\(\alpha\) through quantile balancing. Assumption~\ref{ass:visible-quantile}
is a visibility condition: for every context, the highest competing bid exceeds
the learner's value by a fixed positive margin with probability at least
\(1-p_0=1\%\). This guarantees that the \(p_0\)-quantile of the residual
distribution lies above all potentially censored residuals by a positive
margin, so replacing censored residuals by \(-\infty\) does not affect the
quantile used by the estimator. Further discussion of sufficient conditions and
practical choices for Assumptions~\ref{ass:identifiability} and
\ref{ass:visible-quantile} is deferred to
Appendix~\ref{app:assumption-discussion}.
\subsection{Our Main Results and Contributions}

Our main contribution is a novel bidding algorithm for contextual first-price auctions with budget constraints (Algorithm~\ref{alg:budget-bidding}), achieving $\tilde{O}(\sqrt{T})$ regret (Theorem~\ref{thm:main-regret}). This result advances the literature in several ways:

\noindent \textbf{a) Contextual Bidding with Budgets:} \cite{han2025optimal} achieves $\tilde{O}(\sqrt{T})$ regret with one-sided feedback, but without budgets or contexts. \cite{wang2023learning} attains the same rate with budgets, but without contexts. Our setting captures realistic market heterogeneity where competition varies with observable contexts.

\noindent \textbf{b) Removing Distributional Assumptions:} While \cite{badanidiyuru2023learning} studied contextual first-price auctions, achieving $O(\sqrt{T})$-type regret, their analysis assumes the noise distribution $G$ is known. We remove this impractical assumption, operating with both $\alpha$ and $G$ unknown.

\noindent \textbf{c) Novel Estimation Technique:} Contextual pricing with one-sided feedback requires learning from censored observations. We develop a robust regression method based on conditional quantile invariance constraints (Algorithm~\ref{alg:quantile}), proving an estimation error bound of $\tilde{O}(|A|^{-1/2})$, matching the optimal rate. This technique is of independent interest and applicable to other problems.

In summary, this is the first work to simultaneously incorporate budget constraints, contextual competitors, and one-sided feedback in repeated first-price auctions. This intersection creates a challenging setting where standard estimators fail, which we address with a novel robust regression algorithm based on conditional quantile invariance constraints. This innovation enables us to reconstruct competitor behavior without knowing the underlying noise distribution, achieving optimal $\tilde{O}(\sqrt{T})$ regret where prior approaches are inapplicable.

\subsection{Other Related Work}

\textbf{The Shift to First-Price Auctions.}
With the shift from second-price auctions, truth-telling is no longer dominant. Classical game theory derives Nash equilibria from private-value distributions \citep{wilson1969communications,myerson1981optimal,riley1981optimal,wilson1985game}, but it is ill-suited for online advertising where competitors' values are unknown and learning occurs over repeated auctions.

\textbf{Online Learning with Budgets and Contexts.}
Recent online learning frameworks avoid modeling competitor values explicitly. Most prior work focuses on second-price auctions \citep{mohri2014learning,cesa2014regret,roughgarden2019minimizing,zhao2020online} or bidding without budgets \citep{mcafee2011design}. \cite{balseiro2019learning} incorporate budgets via dual gradient descent, but not in first-price auctions. For repeated first-price auctions, regret bounds range from $\tilde{\Theta}(T^{2/3})$ to $\tilde{\Theta}(T^{1/2})$ depending on feedback and competitor assumptions \citep{balseiro2021contextual,han2024optimal,han2020learning}.

\textbf{Budget Constraints and Broader Applications.}
Budget constraints require avoiding low-surplus bids that waste spending. While equilibrium characterizations exist \citep{che1996expected,che1998standard,kotowski2020first,balseiro2021contextual,wang2025adaptive}, the fundamental learning question remains open. We show adaptive bidding with $\tilde{O}(\sqrt{T})$ regret under one-sided feedback. Similar budget-constrained learning problems arise in bandit problems \citep{badanidiyuru2018bandits, liu2022non}, cloud storage \citep{das2011risk}, natural gas \citep{secomandi2014optimal}, cultural markets \citep{van2016aligning}, and electricity markets \citep{frangioni2024bilevel,rosemberg5043309strategic}.
\section{Quantile-based Estimation under Censored Feedback}
\label{sec:robust-regression}
The core statistical challenge is estimating the linear parameter $\alpha$ from censored observations $\{(x_t, d_t \cdot \I\{b_t \leq  d_t\})\}_{t=1}^n$. We only observe $d_t$ when we lose the auction ($b_t \leq d_t$). Traditional regression methods fail because the censoring depends on our adaptive bidding decisions, and when $b_t > d_t$ we observe only an upper bound, not the actual value.

\medskip
\noindent
\textbf{Quantile-based Estimator.}
Our approach exploits that a sufficiently high residual quantile remains
identifiable despite one-sided censoring. In the theoretical analysis, we use
the fixed quantile level \(p_0\). Under
Assumption~\ref{ass:visible-quantile}, for every context, the highest competing
bid exceeds the learner's value by a fixed positive margin with probability at
least \(1-p_0=1\%\). Equivalently, the \(p_0\)-quantile of the residual distribution
lies above the residuals that can be censored. Therefore, replacing censored
observations by \(-\infty\) does not affect the empirical \(p_0\)-quantile.
We split samples by the magnitude of \(x_t\) and identify \(\alpha\) via the
difference between group-wise residual quantiles.

For any candidate $\tilde{\alpha} \in \mathcal A$, define the residuals
\[
R_i(\tilde{\alpha}) =
\begin{cases}
d_i - \tilde{\alpha} x_i, & b_i \leq d_i,\\[4pt]
-\infty, & \text{otherwise}.
\end{cases}
\] 
Partition samples into $I_1 = \{i: x_i \le \mathrm{median}(x)\}$ and $I_2 = \{i: x_i > \mathrm{median}(x)\}$, compute empirical $p_0$-quantiles $\hat q_k(\tilde{\alpha})$ for $k=1,2$, and set $Q(\tilde{\alpha}) = |\hat q_1(\tilde{\alpha}) - \hat q_2(\tilde{\alpha})|$. The estimator is $\hat \alpha_n = \arg\min_{\tilde{\alpha} \in \mathcal A} Q(\tilde{\alpha})$.

\begin{algorithm}[htbp]
\caption{Quantile-Based Estimator}
\label{alg:quantile}
\begin{enumerate}
    \item \textbf{Input:} samples $\{(x_i,d_i\I\{b_i\leq d_i\})\}_{i=1}^n$, local parameter set $\mathcal A$, quantile level $p_0$.
    \item Split into two groups $I_1, I_2$ by the median of $x_i$.
    \item For each $\tilde{\alpha} \in \mathcal A$:
    \begin{enumerate}
        \item Construct residuals $R_i(\tilde{\alpha})$.
        \item Compute quantiles $\hat q_1(\tilde{\alpha}), \hat q_2(\tilde{\alpha})$.
        \item Compute $Q(\tilde{\alpha})$.
    \end{enumerate}
    \item Output $\hat \alpha_n = \arg\min_{\tilde{\alpha}\in\mathcal A} Q(\tilde{\alpha})$.
\end{enumerate}
\end{algorithm}

\medskip
\noindent
\textbf{Key Property: Quantile Invariance.}
Let $\mathcal A_0$ be the initial local search interval after exploration phase. With probability at least $1-\delta$, $\sup_{\tilde\alpha\in\mathcal A_0} |\tilde\alpha-\alpha| \le a_0$ where $a_0 = \widetilde O(T^{-1/4})$. Let $R_i^o(\tilde\alpha)=d_i-\tilde\alpha x_i$ be the uncensored residual and let $\hat q_k^o(\tilde\alpha)$ be the empirical $p_0$-quantile from $\{R_i^o(\tilde\alpha)\}_{i\in I_k}$. Under Assumption~\ref{ass:visible-quantile} and the fixed choice \(p_0=0.99\), censoring disappears uniformly over \(\mathcal A_0\): all censored observations fall strictly below the relevant empirical \(p_0\)-quantile of the uncensored residual distribution.

\begin{lemma}[Conditional Quantile Invariance]
\label{lem:quantile-invariance}
Fix a phase $i$ with sample set $A_i$ satisfying $|A_i|\ge \sqrt T$. Let $I_1, I_2$ be median-split groups, $\mathcal A_0$ the initial local search interval with $\sup_{\tilde\alpha\in\mathcal A_0} |\tilde\alpha-\alpha| \le a_0$, and $r_T(\delta) = C_q\sqrt{\frac{\log(T/\delta)}{\sqrt T}}$. Suppose Assumption~\ref{ass:visible-quantile} holds. For sufficiently large $T$, if all bids on $A_i$ satisfy $b_j\le v_j$ and $2\bar x a_0+4r_T(\delta)\le \Delta_q$, then with probability at least $1-\delta$, $\hat q_k(\tilde\alpha) = \hat q_k^o(\tilde\alpha)$ for all $\tilde\alpha\in\mathcal A_0,\ k=1,2$, where all empirical quantiles are $p_0$-quantiles.
\end{lemma}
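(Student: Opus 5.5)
The approach is to show that, uniformly over $\tilde\alpha\in\mathcal A_0$ and $k=1,2$, every censored residual lies strictly below the empirical $p_0$-quantile $\hat q_k^o(\tilde\alpha)$. Replacing the censored residuals by $-\infty$ then only moves points that were already below the $p_0$-quantile further down. It leaves unchanged both the number of points at or above any level exceeding them and the order statistic at rank $\lceil p_0|I_k|\rceil$, so $\hat q_k(\tilde\alpha)=\hat q_k^o(\tilde\alpha)$ follows deterministically on a good event.

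\textbf{Step 1 (upper bound on censored residuals).} An observation $j$ is censored only if $b_j>d_j$. Since $b_j\le v_j=f(x_j)$, this gives
\[
d_j<f(x_j),\qquad R_j^o(\tilde\alpha)=d_j-\tilde\alpha x_j<f(x_j)-\alpha x_j+|\tilde\alpha-\alpha|\bar x\le f(x_j)-\alpha x_j+a_0\bar x .
\]

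\textbf{Step 2 (lower bound on the empirical quantile).} For each $j\in I_k$ define the event $E_j=\{d_j\ge f(x_j)+\Delta_q\}$. By Assumption~\ref{ass:visible-quantile}, conditional on $x_j$ this event has probability at least $0.01=1-p_0$. On $E_j$ we have
\[
R_j^o(\tilde\alpha)\ge f(x_j)-\alpha x_j+\Delta_q-a_0\bar x .
\]
I would compare levels through the quantity $z$ rather than pointwise in $x$. Write $h(x)=f(x)-\alpha x$ and $M=\sup_x h(x)$. Assumption~\ref{ass:visible-quantile} gives $G^{-1}(p_0)\ge M+\Delta_q$. The residual satisfies $R_j^o(\tilde\alpha)=z_j+(\alpha-\tilde\alpha)x_j\ge z_j-a_0\bar x$, and every censored residual is below $M+a_0\bar x$. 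It therefore suffices to show that the empirical $p_0$-quantile of $\{z_j\}_{j\in I_k}$ is at least $G^{-1}(p_0)-2r_T(\delta)$. Monotonicity of order statistics under the perturbation $z_j\mapsto z_j+(\alpha-\tilde\alpha)x_j$ then gives, uniformly over $\tilde\alpha$,
\[
\hat q_k^o(\tilde\alpha)\ge G^{-1}(p_0)-2r_T(\delta)-a_0\bar x .
\]
Comparing with Step 1, the censored residuals are strictly below $\hat q_k^o(\tilde\alpha)$ whenever
\[
M+a_0\bar x<M+\Delta_q-2r_T(\delta)-a_0\bar x,
\]
which is implied by the hypothesis $2\bar x a_0+4r_T(\delta)\le\Delta_q$, with slack.

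\textbf{Step 3 (concentration).} The main obstacle is the empirical quantile bound for $z$, because $I_k$ is defined by the sample median of $x$, and the $z_j$ are independent of the $x_j$. Conditioning on $x_{1:n}$ fixes $I_k$ of size about $|A_i|/2\ge\sqrt T/2$. The $z_j$, $j\in I_k$, remain i.i.d.\ $G$ conditionally; the bids $b_j$ depend on the past, but $z_j$ is independent of $b_j$ given the history, so the count $\sum_{j\in I_k}\I\{z_j\ge G^{-1}(p_0)-2r_T\}$ admits a martingale (Freedman/Bernstein) bound. By Assumption~\ref{ass:lipschitz}, $G(G^{-1}(p_0)-2r_T)\le p_0-2g_{\min}r_T$. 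A DKW- or Hoeffding-type bound with $|I_k|\gtrsim\sqrt T$ then shows that, with probability $1-\delta/2$ per group, fewer than $p_0|I_k|$ of the $z_j$ lie below $G^{-1}(p_0)-2r_T$, once $C_q$ is large relative to $1/g_{\min}$. A union bound over $k=1,2$ gives probability $1-\delta$. Uniformity over $\tilde\alpha$ is free because Step 2 uses only the deterministic sandwich $|R_j^o(\tilde\alpha)-z_j|\le a_0\bar x$, and "sufficiently large $T$" absorbs rounding in $\lceil p_0|I_k|\rceil$.
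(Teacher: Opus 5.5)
Your proof is correct and has the same skeleton as the paper's. You bound censored residuals deterministically via $d_j<b_j\le f(x_j)$, you lower-bound $\hat q_k^o(\tilde\alpha)$, and you separate the two using the visibility margin $G^{-1}(p_0)\ge\sup_x\{f(x)-\alpha x\}+\Delta_q$.

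Where you genuinely differ is in how you get uniformity over $\tilde\alpha\in\mathcal A_0$. The paper invokes the uniform concentration event $\sup_{\tilde\alpha,k}|\hat q_k^o(\tilde\alpha)-q_k^o(\tilde\alpha)|\le 2r_T(\delta)$. This is the VC-type Lemma~\ref{lem:uniform-quantile-concentration}, which needs the residual density bounded below uniformly in $\tilde\alpha$. The paper then perturbs the \emph{population} quantile, $q_k^o(\tilde\alpha)\ge G^{-1}(p_0)-\bar x a_0$. You instead perturb at the sample level: $R_j^o(\tilde\alpha)\ge z_j-a_0\bar x$ together with monotonicity of order statistics. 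As a result, a single one-sided Hoeffding bound for the i.i.d.\ noises $\{z_j\}_{j\in I_k}$ at the one threshold $G^{-1}(p_0)-2r_T(\delta)$ suffices.

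Your route is more elementary and self-contained. It needs only the lower density bound of $G$ near $G^{-1}(p_0)$ and no function-class argument. It also gives a \emph{strict} separation between censored residuals and $\hat q_k^o(\tilde\alpha)$, which is exactly what makes the $-\infty$ replacement harmless. The paper's chain ends in a non-strict inequality, so a tie at the quantile is not formally excluded there. In exchange, the paper's route reuses the same good event $\mathcal E_2$ that Theorem~\ref{thm:main-estimation} needs anyway, so one concentration event serves both results.

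Three minor remarks:
\begin{itemize}
\item The events $E_j$ introduced at the start of your Step 2 are never used and can be dropped.
\item No martingale argument is needed in Step 3. Censoring enters only through the deterministic bound of Step 1, and conditional on the contexts the $z_j$ with $j\in I_k$ are simply i.i.d.\ $G$.
\item Like the paper, you tacitly need $|I_k|\ge c|A_i|$ from the median split. This requires a tie-breaking convention if the context distribution has an atom at the median.
\end{itemize}
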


This lemma formalizes that censored observations lie below the target quantile uniformly over a neighborhood of $\alpha$, so replacing them by $-\infty$ is harmless. The proof uses a deterministic bound on censored residuals and empirical quantile concentration; see Appendix~\ref{app:quantile-invariance}. This reduces the analysis to an effectively uncensored setting.

\medskip
\noindent
\textbf{Confidence Interval.}
The quantile invariance property yields the following finite-sample guarantee (proof in Appendix~\ref{app:main-estimation}).
\begin{theorem}[Estimation Error]
\label{thm:main-estimation}
For any $\delta\in(0,1)$, fix a phase $i$ with $|A_i|\ge \sqrt T$. Suppose Assumptions~\ref{ass:lipschitz}, \ref{ass:identifiability}, and \ref{ass:visible-quantile} hold, Algorithm~\ref{alg:quantile} is run on $A_i$, and the conditions of Lemma~\ref{lem:quantile-invariance} hold. Then there exists a constant $C>0$ such that
\[
|\hat\alpha_i-\alpha|
\le
C\sqrt{\frac{\log(|A_i|/\delta)}{|A_i|}}
\]
with probability at least $1-\delta$.
\end{theorem}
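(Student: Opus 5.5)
We reduce to the uncensored setting via Lemma~\ref{lem:quantile-invariance}, establish uniform concentration of the empirical group quantiles around their population counterparts $q_k(\tilde\alpha)$ over $\mathcal A_0$, and then convert the resulting near-balance of $Q$ at $\hat\alpha_i$ into an error bound using Assumption~\ref{ass:identifiability}. The steps are as follows.

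\textbf{Step 1 (remove censoring).} On the event of Lemma~\ref{lem:quantile-invariance}, which has probability at least $1-\delta/3$, we have $\hat q_k(\tilde\alpha)=\hat q_k^o(\tilde\alpha)$ for all $\tilde\alpha\in\mathcal A_0$ and $k=1,2$. Hence $Q(\tilde\alpha)=|\hat q^o_1(\tilde\alpha)-\hat q^o_2(\tilde\alpha)|$ on $\mathcal A_0$, and from here on only uncensored residuals need to be analyzed. I will take the search set $\mathcal A=\mathcal A_0$ to be either a grid of mesh $|A_i|^{-1}$ or the full interval. I also need $\alpha$ itself (or a grid point within $|A_i|^{-1}$ of it) to lie in $\mathcal A_0$; this holds on the exploration event.

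\textbf{Step 2 (uniform quantile concentration).} I want to show that, with probability at least $1-2\delta/3$,
\[
\sup_{\tilde\alpha\in\mathcal A_0,\,k=1,2}\bigl|\hat q_k^o(\tilde\alpha)-q_k(\tilde\alpha)\bigr|\le C_1\sqrt{\log(|A_i|/\delta)/|A_i|}.
\]
\begin{itemize}
\item \emph{Fixed $\tilde\alpha$ and $k$.} Conditional on the $x$'s, the residual $R^o_j(\tilde\alpha)=(\alpha-\tilde\alpha)x_j+z_j$ has conditional CDF $G(\cdot-(\alpha-\tilde\alpha)x_j)$. On $I_k$ its mixture CDF therefore has density at least $g_{\min}$ near the $p_0$-quantile. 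The DKW inequality (or Hoeffding applied at the two points $q_k\pm\epsilon$), combined with the density lower bound from Assumption~\ref{ass:lipschitz}, then gives deviation $\epsilon\asymp g_{\min}^{-1}\sqrt{\log(1/\delta)/|I_k|}$ with $|I_k|\approx|A_i|/2$.
\item \emph{Randomness of the median split.} The empirical median differs from the population median, which perturbs the groups. I handle this by noting that only an $O(\sqrt{\log(1/\delta)/|A_i|})$ fraction of points is misassigned. Since quantiles are Lipschitz under mixture perturbation when the density is bounded below, this costs only a comparable amount.
\item \emph{Uniformity over $\tilde\alpha$.} I use a union bound over a grid of mesh $|A_i|^{-1}$ in $\mathcal A_0$. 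This contributes the $\log|A_i|$ factor. Between grid points, $\hat q^o_k(\cdot)$ and $q_k(\cdot)$ are each $\bar x$-Lipschitz in $\tilde\alpha$, because every residual shifts by at most $\bar x|\Delta\tilde\alpha|$.
\end{itemize}

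\textbf{Step 3 (identification).} At the true parameter, $R^o_j(\alpha)=z_j$ regardless of $x_j$, so $q_1(\alpha)=q_2(\alpha)=G^{-1}(p_0)$. Write $\varepsilon=C_1\sqrt{\log(|A_i|/\delta)/|A_i|}$.
\begin{itemize}
\item Step 2 gives $Q(\alpha)\le 2\varepsilon$, up to an additional $\bar x/|A_i|$ if only a grid point near $\alpha$ is available.
\item Since $\hat\alpha_i$ minimizes $Q$, it follows that $Q(\hat\alpha_i)\le Q(\alpha)\le 2\varepsilon+\bar x/|A_i|$.
\item Applying Step 2 again, $|q_1(\hat\alpha_i)-q_2(\hat\alpha_i)|\le 4\varepsilon+\bar x/|A_i|$.
\item Because $|\hat\alpha_i-\alpha|\le a_0\le r_0$ for large $T$, Assumption~\ref{ass:identifiability} yields $\kappa|\hat\alpha_i-\alpha|\le 4\varepsilon+\bar x/|A_i|$.
\end{itemize}
This gives the claimed bound with $C=(4C_1+\bar x)/\kappa$. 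Combining the three events by a union bound, the total failure probability is at most $\delta$.

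\textbf{Main obstacle.} I expect Step 2 to be the hard part. The samples in $A_i$ are collected adaptively, but the uncensored residuals depend only on the i.i.d.\ pairs $(x_j,z_j)$ and not on the bids, so concentration should still hold. The delicate points are the data-dependent median split and the uniformity in $\tilde\alpha$. If the split proves awkward, I would first condition on the $x$'s: given them, the $z$'s remain i.i.d., and a Bernstein/Hoeffding bound applies to each fixed group.
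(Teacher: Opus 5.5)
Your proposal is correct and follows the same route as the paper: you remove censoring via Lemma~\ref{lem:quantile-invariance}, prove a uniform bound on the group quantiles over $\mathcal A_0$, and then sandwich $Q(\hat\alpha_i)$ between $Q(\alpha)\le 2\varepsilon$ and $\kappa|\hat\alpha_i-\alpha|-2\varepsilon$ using Assumption~\ref{ass:identifiability}. The differences are only technical: the paper gets uniformity by conditioning on the realized contexts and applying a VC-type bound to the threshold class $\{\mathbf 1\{z\le y-(\alpha-\tilde\alpha)x\}\}$, with the conditional mixture quantile as its population target, whereas you use a grid of mesh $|A_i|^{-1}$ with $\bar x$-Lipschitzness in $\tilde\alpha$, target the population quantiles of Assumption~\ref{ass:identifiability} directly, and explicitly handle the random median split, $\alpha\in\mathcal A_0$, and $a_0\le r_0$, all of which the paper leaves implicit.
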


\section{Algorithm Design}
\label{sec:algorithm}

We design the algorithm through a Lagrangian relaxation of the budget constraint. Instead of enforcing the hard budget pathwise, consider the soft constraint $\mathbb{E}_{v,d}^{\pi}[\sum_{t=1}^T \I\{b_t^{\pi}>d_t\}b_t^{\pi}]\le \rho T$. For a multiplier $\lambda\ge0$, the corresponding per-round Lagrangian reward is $\I\{b_t^{\pi}>d_t\}(v_t-(1+\lambda)b_t^{\pi})+\lambda\rho$. Since $b_t^\pi$ is chosen before observing $d_t$, this can be written in expectation as $(v_t-(1+\lambda)b_t^\pi)G(b_t^\pi-\alpha x_t)+\lambda\rho$. By weak duality, the relaxed benchmark upper bounds the hard-budget optimum:
\begin{equation}
\label{eq:weak-duality}
\max_{\pi} R(\pi)
\le
T\cdot
\min_{\lambda\ge0}
\left\{
\mathbb E_v\left[
\max_b (v-(1+\lambda)b)G(b-\alpha x)
\right]
+\lambda\rho
\right\}.
\end{equation}
The algorithm approximately optimizes this relaxed objective while enforcing the actual budget by stopping once the remaining budget is below $\bar v$. The resulting loss is due to early stopping and is controlled in the regret analysis.

The distribution $G$ and the contextual parameter $\alpha$ are unknown, so the algorithm estimates the reward and cost components $r(v,b,x)=(v-b)G(b-\alpha x)$ and $c(b,x)=bG(b-\alpha x)$. The main idea is to work in the shifted coordinate $\tilde b=b-\hat\alpha x$, which removes the context-dependent linear shift and allows observations from different contexts to be compared on a common scale.

\paragraph{Discretization and phases.}
Let $M=K=\lceil\sqrt T\rceil$, $\Delta_v=\bar v/M$, and $\Delta_b=\bar v/K$. Define the value grid $V=\{v^m=m\Delta_v:m=0,1,\ldots,M\}$ and the shifted-bid grid $\widetilde{\mathcal B}=\{\tilde b^k=k\Delta_b:k=0,1,\ldots,K\}$. Since $v=f(x)$ and $f$ is invertible, each value grid point has representative context $x^m=f^{-1}(v^m)$. For each value bin $m$, the algorithm maintains an active shifted-bid set $\mathcal B_i^m\subseteq\widetilde{\mathcal B}$, initialized as $\mathcal B_0^m=\widetilde{\mathcal B}$.

The horizon is divided into an initial exploration block and doubling phases. Let $h=\lceil\sqrt T\rceil$ and $T_0=\{1,\ldots,2h\}$. For each phase $i\ge1$, set $\ell_i=2^{i-1}h$ and divide the phase into an estimation block $A_i$ and an update block $B_i$, where $A_i$ has length $\ell_i$ and $B_i$ has length $\ell_i$ except possibly in the final phase. The block $A_i$ is used to update the contextual parameter estimate, while $B_i$ is used to update the reward and cost estimators and the active shifted-bid sets.

\paragraph{Initial exploration and parameter estimation.}
During $T_0$, the algorithm bids $b_t=0$, so $d_t$ is observed under the losing-feedback model. It then forms the OLS estimate
\begin{equation}
\label{eq:alpha-estimate}
\hat\alpha_0
=
\frac{\sum_{i\in T_0}(x_i-\bar x_{T_0})(d_i-\bar d_{T_0})}{\sum_{i\in T_0}(x_i-\bar x_{T_0})^2},
\end{equation}
where $\bar x_{T_0}$ and $\bar d_{T_0}$ are the sample averages. By standard concentration, with probability at least $1-\delta$, $|\hat\alpha_0-\alpha|\le C_0T^{-1/4}\sqrt{\log(T/\delta)}$. We initialize the local search interval $\mathcal A_0$ around $\hat\alpha_0$ with this radius, up to a sufficiently large constant. In later phases, $\hat\alpha_i$ is updated by the quantile-based estimator in Algorithm~\ref{alg:quantile} using the samples in $A_i$.

\paragraph{Shifted-coordinate estimators.}
For a sample $s\in B_i$, define $\tilde b_s=b_s-\hat\alpha_i x_s$ and, whenever $d_s$ is observed, $\tilde d_s=d_s-\hat\alpha_i x_s$. The product indicator $\I\{\tilde b^k\ge\tilde b_s\}\I\{\tilde b^k\ge\tilde d_s\}$ is implementable from one-sided feedback: if $\tilde b^k<\tilde b_s$ it is zero, while if $\tilde b^k\ge\tilde b_s$, then winning rounds imply the second indicator is one and losing rounds reveal $d_s$ directly. Therefore, for any $(v^m,\tilde b^k)$ and evaluation context $x_t$, define
\begin{align}
\label{eq:r-estimator}
\hat r_i(v^m,\tilde b^k;x_t)
&=
\frac{1}{n_i^k}
\sum_{s\in B_i}
\I\{\tilde b^k\ge\tilde b_s\}
\I\{\tilde b^k\ge\tilde d_s\}
\bigl(v^m-(\tilde b^k+\hat\alpha_i x_t)\bigr),
\\
\label{eq:c-estimator}
\hat c_i(v^m,\tilde b^k;x_t)
&=
\frac{1}{n_i^k}
\sum_{s\in B_i}
\I\{\tilde b^k\ge\tilde b_s\}
\I\{\tilde b^k\ge\tilde d_s\}
(\tilde b^k+\hat\alpha_i x_t),
\end{align}
where $n_i^k=\sum_{s\in B_i}\I\{\tilde b^k\ge\tilde b_s\}$. When evaluated at the representative context $x^m=f^{-1}(v^m)$, we write $\hat r_i(v^m,\tilde b^k)$ and $\hat c_i(v^m,\tilde b^k)$ for short.

\paragraph{Action selection and filtering.}
The active-set update is justified by a monotonicity property of the oracle shifted bid. For each value bin $m$, let $\tilde b_m^\star\in\arg\max_{\tilde b}\tilde r(v^m,\tilde b)$, where $\tilde r(v^m,\tilde b)=(v^m-(\tilde b+\alpha x^m))G(\tilde b)$. Under Assumptions~\ref{ass:Superlinear} and~\ref{ass:lipschitz}, there exists a selection of oracle shifted bids such that $m_1<m_2$ implies $\tilde b_{m_1}^\star\le\tilde b_{m_2}^\star$; the proof is provided in Appendix~\ref{app:shifted-bid-monotonicity}.

This monotonicity justifies the cross-bin elimination rule: after shifting by the contextual component, smaller shifted bids eliminated by a lower value bin cannot be needed by a larger value bin. Selecting the smallest feasible active shifted bid promotes exploration, while monotone filtering removes bids inconsistent with the value ordering. Together, these mechanisms shrink the active sets while preserving near-optimal bids, which is essential for the regret guarantees.

\noindent\textbf{Remark.} While the algorithm uses online gradient descent for updating $\lambda$ and an active-set mechanism for bid selection, following~\cite{wang2023learning}, the core new component is the contextual parameter estimation step under one-sided feedback.

\begin{algorithm}[htbp]
\caption{Bidding Algorithm for Contextual First-Price Auctions with Budgets}
\label{alg:budget-bidding}
\begin{enumerate}
    \item \textbf{Input and initialization:} Given $T$, $B=\rho T$, $\delta$, $\eta=T^{-1/2}$, and quantile level $p_0$, construct the grids, active sets, and phases above. Initialize $\hat r_0=\hat c_0=0$, $w_0^m=1$, $B_1=B$, and $\lambda_{|T_0|}=0$.

    \item \textbf{Initial exploration:} For $t\in T_0$, set $b_t=0$ and observe $d_t$. Estimate $\hat\alpha_0$ by~\eqref{eq:alpha-estimate} and initialize the local search interval $\mathcal A_0$ around $\hat\alpha_0$.

    \item \textbf{For each phase $i=1,\ldots,n$:}
    \begin{enumerate}
        \item Use the estimates and active sets from the previous phase: $\hat\alpha_{i-1}$, $\hat r_{i-1}$, $\hat c_{i-1}$, and $\{\mathcal B_{i-1}^m\}_{m=0}^M$.

        \item For each round $t\in A_i\cup B_i$, choose $v^{m(t)}=\max\{u\in V:u\le v_t/(1+\lambda_t)\}$ and $x^{m(t)}=f^{-1}(v^{m(t)})$. Select the smallest feasible active shifted bid
        \[
        \tilde b_t
        =
        \inf\{\tilde b\in\mathcal B_{i-1}^{m(t)}:
        0\le\tilde b+\hat\alpha_{i-1}x^{m(t)}\le v^{m(t)}\},
        \]
        submit $b_t=\tilde b_t+\hat\alpha_{i-1}x^{m(t)}$, update $\lambda_{t+1}=\max\{0,\lambda_t-\eta(\rho-\hat c_{i-1}(v^{m(t)},\tilde b_t))\}$ and $B_{t+1}=B_t-b_t\I\{b_t>d_t\}$. If $B_{t+1}<\bar v$, terminate.

        \item At the end of $A_i$, update $\hat\alpha_i$ by Algorithm~\ref{alg:quantile} using the one-sided samples in $A_i$ over the local interval $\mathcal A_0$.

        \item At the end of $B_i$, update $\hat r_i$ and $\hat c_i$ by~\eqref{eq:r-estimator}--\eqref{eq:c-estimator}. For each value bin $m$, first apply the monotone filtering rule $\mathcal B_{i-1}^m\leftarrow\{\tilde b^k\in\mathcal B_{i-1}^m:\tilde b^k\ge\max_{s<m}\inf\mathcal B_{i-1}^s\}$, and then keep bids whose estimated rewards are within $2w_i^m$ of the empirical best bid, where $w_i^m=\sqrt{\log(T/\delta)/(N_i^m\vee1)}$ and $N_i^m=\min_{\tilde b^k\in\mathcal B_{i-1}^m}n_i^k$.
    \end{enumerate}
\end{enumerate}
\end{algorithm}

\section{Regret Analysis}
\label{sec:regret-analysis}

In this section, we analyze Algorithm~\ref{alg:budget-bidding} under one-sided information feedback. The regret is measured against the optimal hard-budget policy. The proof first upper bounds this benchmark by a relaxed Lagrangian benchmark, and then shows that the algorithm is within $\widetilde O(\sqrt T)$ of the relaxed benchmark. The key challenge is that the highest competing bid is unobserved on winning rounds; the shifted-coordinate estimators and confidence-based active-set elimination control the resulting censored-feedback errors uniformly over phases.

\begin{theorem}[Main Result]
\label{thm:main-regret}
Under Assumptions~\ref{ass:Superlinear}, \ref{ass:lipschitz}, \ref{ass:bounded}, \ref{ass:identifiability}, and~\ref{ass:visible-quantile}, Algorithm~\ref{alg:budget-bidding} achieves
\[
\mathbb E[\mathrm{Regret}(\pi)]
\le
C \sqrt{T}\log T\,
\bigl(\bar v+\bar\lambda+\bar x(1+\bar\lambda)\bigr),
\]
where $C>0$ is a universal constant and $\bar\lambda:=2\bar v/\rho+1$. 
\end{theorem}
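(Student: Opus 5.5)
The plan is to bound the hard-budget optimum by the Lagrangian benchmark in \eqref{eq:weak-duality} and then compare the algorithm's realized reward with $T\cdot D(\lambda)$, where $D(\lambda)=\mathbb E_v[\max_b (v-(1+\lambda)b)G(b-\alpha x)]+\lambda\rho$. The first step is a good event $\mathcal E$ of probability at least $1-O(\delta\log T)$, obtained by a union bound over the $O(\log T)$ phases with $\delta=1/T$. On $\mathcal E$ the following hold simultaneously:
\begin{itemize}
\item[(i)] the OLS bound for $\hat\alpha_0$, so that $\mathcal A_0$ has radius $a_0=\widetilde O(T^{-1/4})$ and the condition $2\bar x a_0+4r_T(\delta)\le\Delta_q$ of Lemma~\ref{lem:quantile-invariance} holds for large $T$ (the algorithm always bids $b_t\le v^{m(t)}\le v_t$);
\item[(ii)] Theorem~\ref{thm:main-estimation} in every phase, giving $|\hat\alpha_i-\alpha|\le C\sqrt{\log(T/\delta)/\ell_i}$;
\item[(iii)] uniform concentration of $\hat r_i,\hat c_i$ over the $O(T)$ grid pairs $(m,k)$.
\end{itemize}
The complement of $\mathcal E$ contributes at most $\bar v T\cdot O(\delta\log T)=O(\log T)$ to the expected regret.

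The second step controls the estimators in shifted coordinates. For a fixed $\tilde b^k$ the summands in \eqref{eq:r-estimator} are i.i.d. conditionally on the past, and their mean is the probability that $z\le \tilde b^k+(\hat\alpha_i-\alpha)x$. By Lipschitzness of $G$ (Assumption~\ref{ass:lipschitz}) this mean differs from $G(\tilde b^k)$ by at most $g_{\max}\bar x|\hat\alpha_i-\alpha|$. Replacing the evaluation context $x_t$ by $x^m$ and $\hat\alpha_i$ by $\alpha$ in the payment term costs at most $(1+\bar\lambda)\bar x|\hat\alpha_i-\alpha|$, which is the source of the $\bar x(1+\bar\lambda)$ factor. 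Hoeffding's inequality then gives $|\hat r_i-\tilde r|\le w_i^m+O(\bar x\sqrt{\log(T/\delta)/\ell_i})$, and the same bound holds for $\hat c_i$. Using this bound together with the monotone selection of oracle shifted bids (Appendix~\ref{app:shifted-bid-monotonicity}), I will show inductively that on $\mathcal E$:
\begin{itemize}
\item[(a)] the near-optimal grid bid for each $m$ is never eliminated, neither by the $2w_i^m$ rule nor by the monotone filter;
\item[(b)] every surviving bid is $O(w_i^m+\bar x\sqrt{\log T/\ell_i})$-suboptimal.
\end{itemize}
Discretization of $v/(1+\lambda)$ and $\tilde b$ at mesh $\bar v/\sqrt T$ adds $O(\bar v(1+\bar\lambda)/\sqrt T)$ per round.

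The third step, which I expect to be the main obstacle, is summing the per-round suboptimality. Because the algorithm picks the smallest active bid, $n_i^k\ge$ the number of rounds in $B_i$ whose bin played at or below $\tilde b^k$. Under one-sided feedback every such round informs all larger $\tilde b^k$, so $N_i^m$ grows like the visit count of the bins. The cross-bin filtering is what prevents an unvisited bin from holding a low bid with a tiny count. I would mimic the argument of \cite{wang2023learning}: bound $\sum_{t\in\text{phase }i} w_{i-1}^{m(t)}\lesssim \sum_m \sqrt{\ell_i\cdot(\text{visits of }m)}$ and avoid the naive $\sqrt{M\ell_i}$ bound by using the ordering so that effectively one chain of counts is shared. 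This yields $\widetilde O(\sqrt{\ell_i})$ per phase and $\widetilde O(\sqrt T)$ in total over the doubling phases. The contextual error sums to $\sum_i \ell_i\,\bar x(1+\bar\lambda)\sqrt{\log T/\ell_i}=O(\bar x(1+\bar\lambda)\sqrt{T\log T})$.

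Finally, the dual analysis handles $\lambda_t$. Standard online gradient descent with $\eta=T^{-1/2}$ and gradients bounded by $\bar v+\rho$ keeps $\lambda_t\le\bar\lambda$, and gives $\sum_t(\lambda_t-\lambda)(\rho-\hat c(\cdot))\le \lambda^2/(2\eta)+\eta T(\bar v+\rho)^2/2$, which is $O(\bar\lambda^2\sqrt T)$ after normalization. Combining this with the per-round near-optimality of $b_t$ for the Lagrangian at $\lambda_t$ gives $\sum_t r_t\ge T D(\lambda^\ast)-\widetilde O(\sqrt T)$ plus the estimator errors above. The budget side is handled by the same OGD inequality with $\lambda=\bar\lambda$. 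It shows that cumulative cost exceeds $\rho t$ by at most $\widetilde O(\sqrt T)$, so the stopping time $\tau$ satisfies $T-\tau=\widetilde O(\sqrt T)$, and the early-stopping loss is at most $\bar v\,\widetilde O(\sqrt T)$. Collecting the terms with the $\log T$ factors gives $C\sqrt T\log T(\bar v+\bar\lambda+\bar x(1+\bar\lambda))$.
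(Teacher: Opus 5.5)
Your architecture is the paper's. It uses a weak-duality benchmark and a good event built from the OLS localization together with Lemma~\ref{lem:quantile-invariance} and Theorem~\ref{thm:main-estimation} (using $b_t\le v^{m(t)}\le v_t$). It then adds shifted-coordinate confidence bounds with plug-in error $O(\bar x|\hat\alpha_i-\alpha|)$, monotone active sets, doubling-phase sums, and OGD with $\eta=T^{-1/2}$ and $\lambda_t\le\bar\lambda$.

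The genuinely different step is the dual accounting.
\begin{itemize}
\item The paper fixes a stationary comparator $b^\star$ at a dual minimizer $\lambda^\star$. It therefore needs Lemma~\ref{lem:relaxed-stationary-benchmark} to supply both $\mathbb E[c(v,b^\star)]\le\rho$ and complementary slackness. This gives $R^\star_{\mathrm{rel}}=T\,\mathbb E[r(v,b^\star)]$. The mismatch $\lambda_t(c(v_t,b_t^\star)-\hat c_t)$ then splits into an OGD term against comparator $0$ plus $\lambda_t(\rho-c(v_t,b_t^\star))$. The latter is nonnegative in expectation because $\lambda_t$ is predictable and $(v_t,x_t)$ is fresh (Lemma~\ref{lem:c-bound}).
\item You instead compare round $t$ with the pointwise Lagrangian optimum at $\lambda_t$ itself. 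Conditioning on the past lower-bounds the expected reward by $D(\lambda_t)-\lambda_t\rho+\lambda_t\,\mathbb E[c(v_t,b_t)\mid\mathcal F_{t-1}]$ minus the per-round error. The inequality $D(\lambda_t)\ge\min_{\lambda\ge0}D(\lambda)$ then replaces complementary slackness, so only the comparator-$0$ OGD inequality is needed.
\end{itemize}
Your route is more robust. You never need a budget-feasible measurable maximizer that satisfies complementary slackness. The paper extracts one from a subgradient condition, which is delicate when $D$ is not differentiable at $\lambda^\star$.

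On the budget side, your comparator-$\bar\lambda$ regret inequality alone does not bound cumulative cost, because it leaves $\sum_t\lambda_t(\hat c_t-\rho)$ on the right-hand side. You have two ways to close this:
\begin{itemize}
\item telescope $\lambda_{t+1}\ge\lambda_t+\eta(\hat c_t-\rho)$ up to $\tau$ as the paper does in Lemma~\ref{lem:stopping-time-bound}, then apply Azuma to pass from expected to realized spend; or
\item run your reward chain only up to $\tau$ and add the comparator-$\mu$ inequality with $\mu\rho\ge\bar v$. This pays for early stopping without bounding $T-\tau$ separately.
\end{itemize}

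Two further steps of your sketch should be tightened along the paper's lines.

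\emph{Third step.} The intermediate quantity $\sum_m\sqrt{\ell_i\cdot(\text{visits of }m)}$ is at least $\ell_i$, so it cannot yield $\widetilde O(\sqrt{\ell_i})$. What you need is the cumulative-count statement you allude to, which the paper proves in two lemmas:
\begin{itemize}
\item Lemma~\ref{lem:rank-lower-bound}: the monotone lower envelope $\inf\mathcal B^{m_1}_{i-1}\le\inf\mathcal B^{m_2}_{i-1}$, together with the smallest-bid rule, makes $N_i(t)$ at least the number of historical samples with value at most $v_t$.
\item Lemma~\ref{lem:rank-counting-bound}: the rank of a fresh value among $H_i$ i.i.d.\ historical values is uniform, so $\mathbb E[R_{i,t}^{-1/2}]\le 2(H_i+1)^{-1/2}$. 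This gives $O(\sqrt{|A_i\cup B_i|})$ per phase.
\end{itemize}

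\emph{Second step.} The summands of \eqref{eq:r-estimator} are not i.i.d. The indicator $\mathbf 1\{\tilde b^k\ge\tilde b_s\}$, and hence $n_i^k$, depends on adaptively chosen bids, and the conditional win probabilities vary with $x_s$. The paper uses Freedman's inequality on the martingale differences $I_s^k(\mathbf 1\{\tilde b^k\ge\tilde d_s\}-p_s^k)$. A plain Hoeffding bound would need at least a union bound over the random count.
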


We give the main proof idea below. The complete proof is deferred to Appendix~\ref{app:proof-main-regret}.

\paragraph{Relaxed benchmark.}
By weak duality, the optimal hard-budget policy is upper bounded by the Lagrangian relaxation in~\eqref{eq:weak-duality}. On the good estimation event, the dual iterates satisfy $0\le \lambda_t\le \bar\lambda$. Let $R^\star_{\mathrm{rel}}$ denote the corresponding relaxed value restricted to $\lambda\in[0,\bar\lambda]$, and let $b_t^\star$ be the associated pointwise optimizer. Then $R(\pi^\star)\le R^\star_{\mathrm{rel}}$. Since the relaxed value upper bounds the original hard-budget benchmark, it remains to compare Algorithm~\ref{alg:budget-bidding} with this relaxed benchmark. The contribution of the complement of the good event is absorbed into the final expectation bound by boundedness.
\paragraph{Regret decomposition.}
Let $\tau$ be the stopping time of the algorithm. We decompose regret into early stopping, initial exploration, and the doubling phases:
\begin{align}
\mathbb E[\mathrm{Regret}(\pi)]
&\le
\bar v\,\mathbb E[T-\tau]
+
\mathbb E\!\sum_{t\in T_0}
\bigl(r(v_t,b_t^\star)-r(v_t,b_t)\bigr)
+
\mathbb E\!\sum_{i=1}^n\sum_{t\in A_i\cup B_i}
\bigl(r(v_t,b_t^\star)-r(v_t,b_t)\bigr).
\label{eq:regret-decomposition}
\end{align}
The stopping-time bound gives $\mathbb E[T-\tau]\le C\sqrt T\log T$, and the initial exploration block has length $O(\sqrt T)$. Hence these two terms contribute at most $O(\bar v\sqrt T\log T)$.

\paragraph{Commit phases.}
It remains to bound the regret on $A_i\cup B_i$. For $t\in A_i\cup B_i$, let $m(t)$ be the corresponding value bin and write $\hat c_t=\hat c_{i-1}(v^{m(t)},\tilde b_t)$. The shifted-coordinate reward and cost estimators satisfy uniform confidence bounds over all active value--bid pairs: for every active pair $(v^m,\tilde b^k)$, both estimation errors are bounded by
$Cw_i^m+C|\hat\alpha_i-\alpha|\bar x$, where
$w_i^m=\sqrt{\log(T/\delta)/(N_i^m\vee1)}$. The first term is the martingale concentration error under one-sided feedback, while the second is the plug-in error from using $\hat\alpha_i$ in the shifted coordinate.
Here $r_\alpha$ and $c_\alpha$ denote the true shifted-coordinate reward and cost evaluated at the true contextual parameter $\alpha$.

Combining these bounds with active-set elimination and the Lagrangian update yields the one-step bound
$ r(v_t,b_t^\star)-r(v_t,b_t)
\le
C w_{i-1}^{m(t)}
+C|\hat\alpha_{i-1}-\alpha|\bar x
+C\frac{\bar v}{\sqrt T}
+\lambda_t\bigl(c(v_t,b_t^\star)-\hat c_t\bigr).
\label{eq:one-step-lagrangian-bound}$

The discretization error contributes the term $C\bar v/\sqrt T$, and the last term is the Lagrangian cost mismatch controlled by the dual update.

Summing over all phases, the confidence radii contribute $O(\sqrt T\log T)$, Theorem~\ref{thm:main-estimation} gives
\[
\mathbb E\!\left[
\sum_{i=1}^n\sum_{t\in A_i\cup B_i}
|\hat\alpha_{i-1}-\alpha|
\right]
\le C\sqrt T\log T,
\]
and the dual update with $\eta=T^{-1/2}$ controls the cumulative Lagrangian cost mismatch. Therefore,
\[
\mathbb E\!\left[\sum_{i=1}^n\sum_{t\in A_i\cup B_i}
\bigl(r(v_t,b_t^\star)-r(v_t,b_t)\bigr)\right]
\le
C\sqrt T\log T\,
\bigl(\bar v+\bar\lambda+\bar x(1+\bar\lambda)\bigr).
\]
Together with~\eqref{eq:regret-decomposition}, this proves Theorem~\ref{thm:main-regret}.
\section{Structured Multi-dimensional Extension}
\label{sec:multi-extension}

We briefly discuss a structured multi-dimensional extension of the contextual bidding model. Let the highest competing bid satisfy $d_t=\langle \alpha,x_t\rangle+z_t$, where $\alpha\in\mathbb R^d$, $d>1$, and $z_t$ is i.i.d. noise drawn from an unknown distribution $G$. A direct extension to arbitrary high-dimensional contexts is difficult because the componentwise order is only partial and $f^{-1}(v)$ is generally not uniquely defined. We therefore focus on a structured setting where contexts lie on a monotone one-dimensional manifold $x_t=h(s_t)$, $s_t\in[0,1]$, with each coordinate of $h$ strictly increasing. This preserves the ordering structure used by the scalar algorithm while allowing competing bids to depend on multiple context features.

Let $\phi(s):=f(h(s))$. We assume that $\phi$ is strictly increasing and hence invertible on its range, and that the scalar superlinear-growth condition holds along the manifold:
\[
\phi(s_1)-\phi(s_2)>
\langle \alpha,h(s_1)-h(s_2)\rangle,\qquad s_1>s_2.
\]
Thus, for each value-grid point $v^m$, the representative context is well defined as $x^m=h(\phi^{-1}(v^m))$, which replaces $f^{-1}(v^m)$ in the scalar algorithm.

To estimate the vector $\alpha$, we use a joint quantile-balancing estimator across multiple ordered context bins. Divide the latent interval $[0,1]$ into $L\ge d+1$ ordered bins $\mathcal I_1,\ldots,\mathcal I_L$, with $L=O(d)$. For a candidate parameter $\tilde\alpha$, define the censored residual $R_i(\tilde\alpha)=d_i-\langle \tilde\alpha,x_i\rangle$ if $b_i\le d_i$, and $R_i(\tilde\alpha)=-\infty$ otherwise. Let $\hat q_\ell(\tilde\alpha)$ be the empirical $p_0$-quantile of the residuals in bin $\ell$, and let $\bar q(\tilde\alpha)=L^{-1}\sum_{\ell=1}^L\hat q_\ell(\tilde\alpha)$. Define $Q_d(\tilde\alpha)
=
\sqrt{\frac1L\sum_{\ell=1}^L
\bigl(\hat q_\ell(\tilde\alpha)-\bar q(\tilde\alpha)\bigr)^2}.$

The multi-dimensional estimator is $\hat\alpha=\arg\min_{\tilde\alpha\in\mathcal A_0}Q_d(\tilde\alpha)$.
At the true parameter, the uncensored residual equals $z_i$, whose distribution is invariant across context bins. Under the visibility of high competing bid condition, the same quantile-invariance argument used in the scalar case applies to the censored observations. We further assume a local identifiability condition for the population version, namely that it grows at least linearly in $\|\tilde\alpha-\alpha\|_2$ in a neighborhood of $\alpha$.

The structured multi-dimensional bidding algorithm is obtained from
Algorithm~\ref{alg:budget-bidding} by replacing \(f^{-1}(v^m)\) with
\(h(\phi^{-1}(v^m))\) and replacing the scalar estimator with the joint
quantile-balancing estimator above. The full pseudo-code is given in
Appendix~\ref{app:multi-dimensional-guarantee}.
\begin{proposition}[Structured multi-dimensional guarantee]
\label{prop:multi-dimensional-guarantee}
Under Assumptions~\ref{ass:lipschitz}, \ref{ass:bounded}, and
\ref{ass:visible-quantile}, together with the monotone-manifold, superlinear-growth, and local
multi-dimensional identifiability conditions described above, the structured multi-dimensional
bidding algorithm achieves
\[
\mathrm{Regret}(\pi)=\widetilde O(d\sqrt{T}).
\]
\end{proposition}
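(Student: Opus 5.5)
The plan is to reduce the multi-dimensional algorithm to the scalar analysis of Theorem~\ref{thm:main-regret}, working in the latent coordinate $s_t$. The only two places where the scalar proof uses scalar structure are (i) the monotonicity of oracle shifted bids across value bins, and (ii) the estimation rate for $\alpha$ in Theorem~\ref{thm:main-estimation}. Everything else carries over verbatim once $f^{-1}(v^m)$ is replaced by $x^m=h(\phi^{-1}(v^m))$ and $\alpha x$ by $\langle\alpha,x\rangle$. This covers the Lagrangian relaxation~\eqref{eq:weak-duality}, the regret decomposition~\eqref{eq:regret-decomposition}, the stopping-time bound, the dual update with $\eta=T^{-1/2}$, and the shifted-coordinate estimators.

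\textbf{Step 1: monotonicity.} Set $\psi(s)=\langle\alpha,h(s)\rangle$. Then the shifted reward becomes $\tilde r(v^m,\tilde b)=(\phi(s^m)-\psi(s^m)-\tilde b)G(\tilde b)$. The manifold superlinear-growth condition says exactly that $\phi-\psi$ is strictly increasing in $s$, and $s^m=\phi^{-1}(v^m)$ is increasing in $m$. Hence the effective value $\phi(s^m)-\psi(s^m)$ is increasing in $m$. The same single-crossing argument as in the scalar case (Appendix~\ref{app:shifted-bid-monotonicity}) then gives a monotone selection of the $\tilde b_m^\star$, which justifies the filtering rule.

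\textbf{Step 2: estimation.} The plug-in error in the reward and cost estimators becomes $|\langle\hat\alpha_i-\alpha,x\rangle|\le\|\hat\alpha_i-\alpha\|_2\sup\|x\|_2$. We therefore need $\|\hat\alpha_i-\alpha\|_2\le C\sqrt{d\log(|A_i|/\delta)/|A_i|}$, or at worst a bound with a factor $d$. The argument has three parts.

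\emph{Initial estimate.} The exploration OLS estimate now gives radius $a_0=\widetilde O(\sqrt d\,T^{-1/4})$.

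\emph{Quantile invariance.} The argument of Lemma~\ref{lem:quantile-invariance} goes through bin by bin. A censored residual satisfies $d_i\le b_i\le v_i$, so $R^o_i(\tilde\alpha)\le f(x_i)-\langle\alpha,x_i\rangle+\|\tilde\alpha-\alpha\|\sup\|x\|$. By Assumption~\ref{ass:visible-quantile}, this lies below the $p_0$-quantile by margin $\Delta_q$ minus small terms. A union bound over the $L$ bins costs only a $\log L$ factor.

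\emph{Uniform concentration.} We need the empirical quantiles to concentrate uniformly over $\tilde\alpha\in\mathcal A_0$. The class $\{\mathbf 1\{d-\langle\tilde\alpha,x\rangle\le q\}\}$ consists of half-spaces in $\mathbb R^{d+1}$, so it has VC dimension $O(d)$. This gives $\sup_{\tilde\alpha,\ell}|\hat q_\ell(\tilde\alpha)-q_\ell(\tilde\alpha)|\le C\sqrt{dL\log(n/\delta)/n}$, using $g\ge g_{\min}$ to convert CDF error to quantile error.

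\emph{Conclusion of Step 2.} Because $Q_d(\alpha)$ is at most this deviation and $Q_d$ is $1$-Lipschitz with respect to the sup-norm in the vector $(\hat q_\ell)$, the population identifiability assumption yields $\kappa\|\hat\alpha-\alpha\|\le 2\sup|\hat Q_d-Q_d|$. This is the $\widetilde O(\sqrt{d/n})$ bound, with $L=O(d)$ possibly contributing another $\sqrt d$.

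\textbf{Step 3: summation.} Summing over doubling phases as in the proof of Theorem~\ref{thm:main-regret}, the confidence radii still contribute $O(\sqrt T\log T)$. The estimation term contributes $\sum_i\ell_i\,\widetilde O(d/\sqrt{\ell_i})=\widetilde O(d\sqrt T)$, and this dominates. The stated $\widetilde O(d\sqrt T)$ follows.

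The main obstacle is Step 2's uniform concentration over the $d$-dimensional set $\mathcal A_0$. The empirical quantile map is discontinuous in $\tilde\alpha$, so one cannot simply take a net over $\mathcal A_0$ and use Lipschitz continuity. I would first try the VC/half-space argument above. If it proves awkward, the fallback is a $\epsilon$-net of $\mathcal A_0$ at resolution $1/n$ combined with the bound $|\hat q_\ell(\tilde\alpha)-\hat q_\ell(\tilde\alpha')|\le\|\tilde\alpha-\tilde\alpha'\|\sup\|x\|$, which holds because every residual moves by at most that amount. That yields a $d\log n$ union-bound cost, still within $\widetilde O(d\sqrt T)$.
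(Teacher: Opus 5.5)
Your proposal is correct at the paper's level of rigor and follows essentially the same route. You inherit the scalar regret decomposition, replace the plug-in error by $\bar x\|\hat\alpha_i-\alpha\|_2$, bound the $L$-bin quantile-balancing estimator's error by $d\sqrt{\log(n/\delta)/n}$ (via bin-wise quantile invariance, uniform concentration, the $1$-Lipschitz property of $Q_d$, and local identifiability), and sum over doubling phases to get $\widetilde O(d\sqrt T)$. You also spell out the oracle-bid monotonicity by noting that $\phi-\langle\alpha,h\rangle$ is increasing, which the paper only asserts.

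The paper's concentration step is exactly your fallback: a $1/n$-net combined with the fact that every residual moves by at most $\bar x\|\tilde\alpha-\tilde\alpha'\|$. This works directly because empirical quantiles, being order statistics, are $1$-Lipschitz in the data. So your remark that the empirical quantile is discontinuous in $\tilde\alpha$ is mistaken, and that step is not really an obstacle.
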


The proof follows the same relaxed-benchmark and active-set decomposition as
Theorem~\ref{thm:main-regret}; the only additional ingredient is the
$d$-dimensional quantile error bound. Details are deferred to
Appendix~\ref{app:multi-dimensional-guarantee}.
\section{Numerical Results}
\label{sec:numerical-results}

\begin{figure}[htbp]
    \centering
    \includegraphics[
        width=0.80\linewidth,
        trim={1.8cm 9cm 1cm 9.0cm},
        clip
    ]{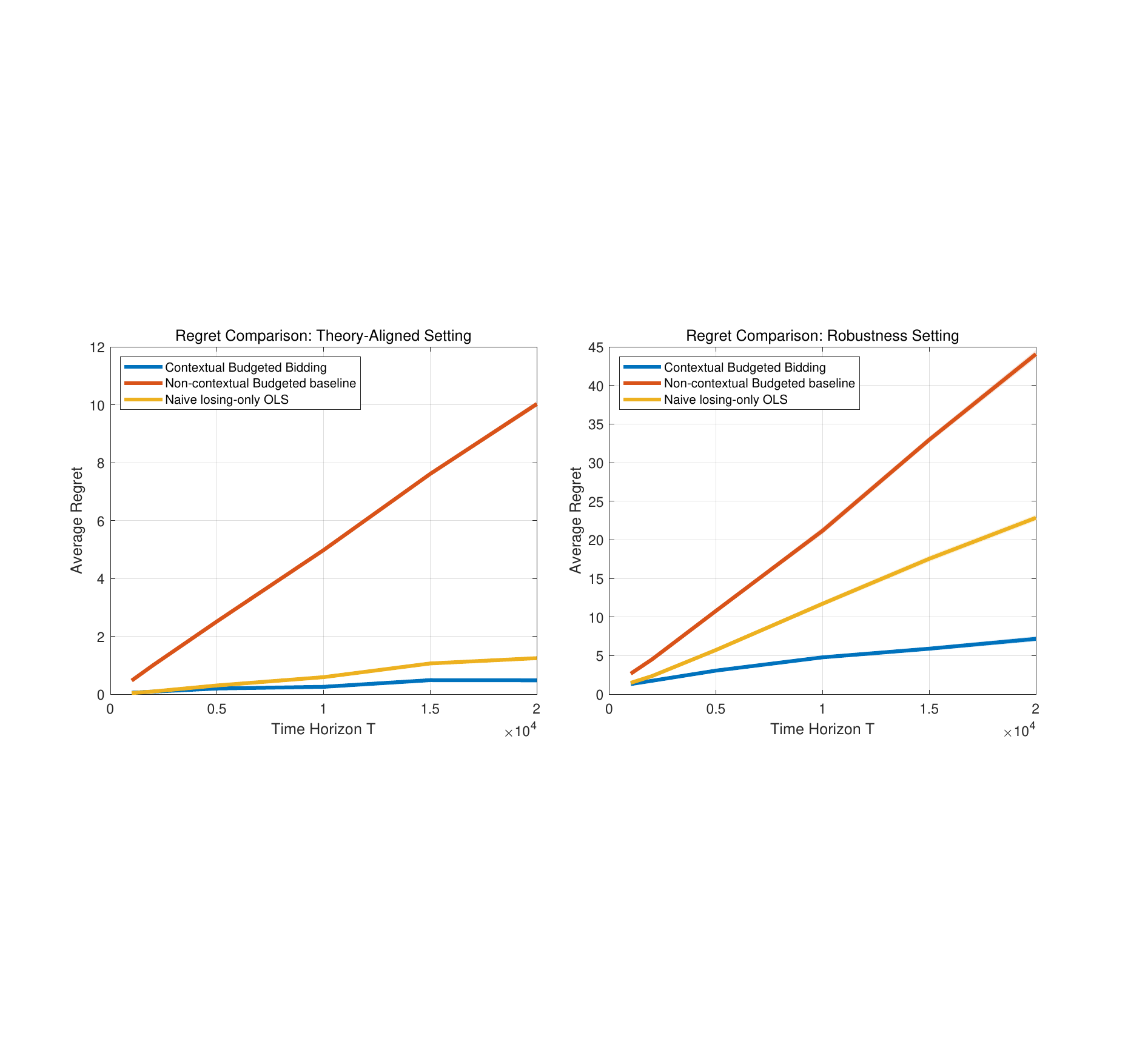}
    \caption{Regret comparison in the one-dimensional setting.}
    \label{fig:onedim-comparison}
\end{figure}

\begin{figure}[htbp]
    \centering
    \includegraphics[
        width=0.80\linewidth,
        trim={1.5cm 9cm 1cm 9.0cm},
        clip
    ]{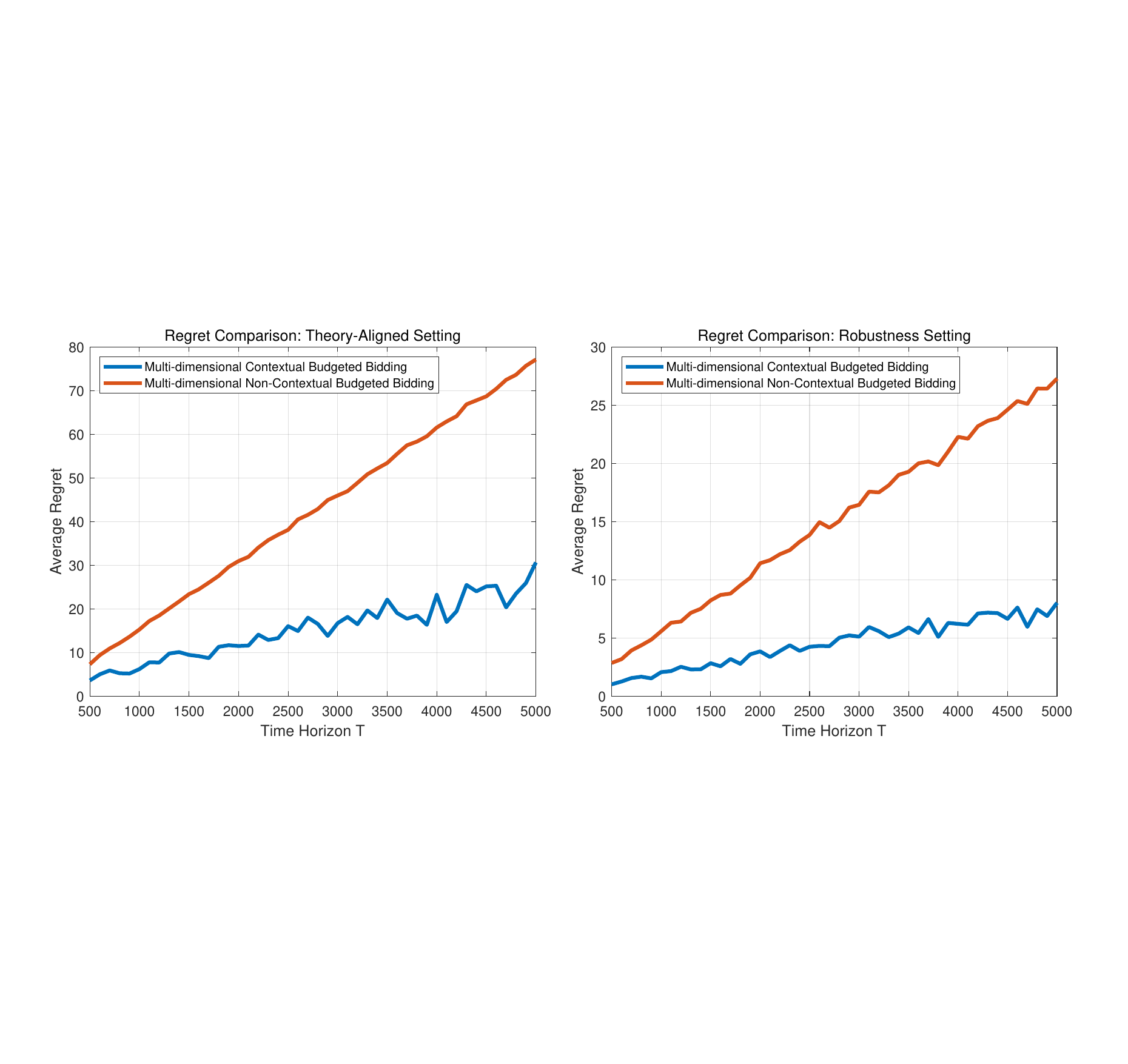}
    \caption{Regret comparison in the structured two-dimensional setting.}
    \label{fig:twodim-result}
\end{figure}

We conduct numerical experiments to evaluate the empirical performance of the proposed contextual bidding algorithms. In the one-dimensional setting, we compare Algorithm~\ref{alg:budget-bidding}, referred to as \emph{Contextual Budgeted Bidding}, with two baselines: a non-contextual baseline adapted from~\cite{wang2023learning}, referred to as \emph{Non-contextual Budgeted Bidding}, and a naive contextual baseline, referred to as \emph{Naive Losing-only OLS}. The latter estimates the contextual parameter using only losing rounds and then plugs this estimate into the contextual bidding rule, thereby ignoring the selection bias induced by one-sided feedback.

Throughout the experiments, we consider two regimes. The \emph{theory-aligned setting} is chosen to satisfy the structural and regularity assumptions used in the analysis, including the superlinear-growth and visibility of high competing bid conditions. The \emph{robustness setting} intentionally goes beyond these sufficient assumptions, for example by using misspecified value functions, noise distributions, or contextual structures. Performance is measured by cumulative regret with respect to an oracle policy that knows the true contextual parameter and the noise distribution. Detailed parameter choices, horizon values, and normalized-regret plots are deferred to Appendix~\ref{app:numerical-details}.

\paragraph{One-dimensional experiments.}
We first consider the scalar contextual model $d_t=\alpha x_t+z_t$. Figure~\ref{fig:onedim-comparison} shows that the proposed contextual algorithm consistently outperforms both the non-contextual baseline and Naive Losing-only OLS. The comparison with Naive Losing-only OLS highlights the value of the quantile-based correction, since directly regressing on losing samples suffers from selection bias under one-sided feedback.

\paragraph{Structured multi-dimensional experiments.}
We also evaluate the structured multi-dimensional extension with $x_t\in\mathbb R^2$ and $d_t=\langle\alpha,x_t\rangle+z_t$. Figure~\ref{fig:twodim-result} shows that the contextual algorithm maintains a clear advantage over the corresponding non-contextual baseline.

\paragraph{Results.}
Overall, Figures~\ref{fig:onedim-comparison} and~\ref{fig:twodim-result} suggest that the proposed contextual parameter estimation procedure is effective under the theory-aligned models and remains empirically robust under moderate misspecification. The normalized regret $\mathrm{Regret}(T)/\sqrt T$ remains stable as $T$ increases, consistent with the predicted $\widetilde O(\sqrt T)$ behavior.
\section{Conclusion}
\label{sec:conclusion}
We studied contextual first-price auctions with budget constraints under one-sided information feedback. We proposed a novel algorithm that combines online gradient descent with robust quantile-based parameter estimation. Our algorithm achieves $\widetilde{O}(\sqrt{T})$ regret, which is optimal up to logarithmic factors. 

\textbf{Limitations and Extensions.} Our theoretical analysis focuses on a scalar contextual model and a structured multi-dimensional extension where contexts lie on a monotone one-dimensional manifold. In addition, the quantile-based identification argument relies on regularity and visibility conditions. Relaxing these assumptions and developing algorithms for fully general high-dimensional contexts are important directions for future research, as are extensions to adversarial contexts and more limited feedback models.
\bibliographystyle{plainnat}
\bibliography{references}

\begin{APPENDICES}
\section{Discussion of Assumptions}
\label{app:assumption-discussion}

In this appendix, we provide additional discussion of
Assumptions~\ref{ass:identifiability} and~\ref{ass:visible-quantile}. These two
assumptions are specific to the one-sided feedback problem. The former is a
local identification condition for the contextual coefficient, while the latter
is a visibility condition ensuring that the high residual quantile used by the
estimator is not affected by censoring.

\paragraph{Discussion of Assumption~\ref{ass:identifiability}.}
Assumption~\ref{ass:identifiability} requires that the residual quantile gap
between the two context groups grows locally at least linearly with
\(|\tilde\alpha-\alpha|\). This is a standard local non-degeneracy condition.
It rules out pathological cases in which changing the contextual coefficient
does not change the residual distributions across the two groups.

To see why this condition is mild, write
\[
d-\tilde\alpha x
=
z+(\alpha-\tilde\alpha)x .
\]
At the true parameter \(\tilde\alpha=\alpha\), the residual equals \(z\), whose
distribution is independent of the group assignment. Hence the two group-wise
population quantiles coincide:
\[
q_1(\alpha)=q_2(\alpha)=G^{-1}(p_0).
\]
When \(\tilde\alpha\neq\alpha\), the residual in group \(k\) is shifted by
\[
(\alpha-\tilde\alpha)x.
\]
Since the median split creates two groups with different distributions of
\(x\), the group-wise residual distributions shift by different amounts.

More concretely, suppose that \(G\) has a positive and smooth density in a
neighborhood of \(G^{-1}(p_0)\), and that the context distribution is
non-degenerate. Let
\[
I_1=\{x\le \mathrm{median}(x)\},
\qquad
I_2=\{x>\mathrm{median}(x)\}.
\]
For a continuous non-degenerate context distribution, we have
\[
\mathbb E[x\mid x\in I_2]
>
\mathbb E[x\mid x\in I_1].
\]
A local expansion of the group-wise quantiles around
\(\tilde\alpha=\alpha\) gives
\[
q_k(\tilde\alpha)
=
G^{-1}(p_0)
+
(\alpha-\tilde\alpha)\mathbb E[x\mid x\in I_k]
+
o(|\tilde\alpha-\alpha|),
\qquad k=1,2.
\]
Therefore,
\[
q_1(\tilde\alpha)-q_2(\tilde\alpha)
=
(\alpha-\tilde\alpha)
\left(
\mathbb E[x\mid x\in I_1]
-
\mathbb E[x\mid x\in I_2]
\right)
+
o(|\tilde\alpha-\alpha|).
\]
Thus the quantile gap is locally proportional to
\(|\tilde\alpha-\alpha|\), as required by
Assumption~\ref{ass:identifiability}.

For example, if \(x\sim \mathcal U(0,1)\), then the median split gives
\[
\mathbb E[x\mid x\le 1/2]=1/4,
\qquad
\mathbb E[x\mid x>1/2]=3/4.
\]
Hence, locally,
\[
|q_1(\tilde\alpha)-q_2(\tilde\alpha)|
\approx
\frac12|\tilde\alpha-\alpha|.
\]
Therefore Assumption~\ref{ass:identifiability} holds with a positive local
constant \(\kappa\). Similar reasoning applies to many common continuous
context distributions, such as truncated normal, beta, or any distribution with
positive density on an interval. The assumption would fail only in degenerate
cases where the two context groups have essentially the same distribution of
\(x\), or where the residual quantile is not locally stable.

\paragraph{Discussion of Assumption~\ref{ass:visible-quantile}.}
Assumption~\ref{ass:visible-quantile} is a visibility condition tailored to
one-sided auction feedback. The learner observes \(d_t\) only when she loses,
i.e., when \(b_t\le d_t\). When she wins, \(d_t\) is censored. The
quantile-based estimator replaces censored residuals by \(-\infty\). For this
replacement to be harmless, the target residual quantile must lie above all
residuals that can be censored by the learner's value-bounded bidding rule.

Since the algorithm submits bids satisfying \(b_t\le v_t=f(x_t)\), any censored
observation must satisfy
\[
d_t<b_t\le f(x_t).
\]
Therefore, censored residuals are always below
\[
f(x_t)-\alpha x_t.
\]
Assumption~\ref{ass:visible-quantile} requires that, for every context, the
highest competing bid exceeds the learner's value by a fixed margin with
probability at least
\[
1-p_0=0.01.
\]
Equivalently,
\[
G^{-1}(p_0)
\ge
f(x)-\alpha x+\Delta_q,
\qquad
\forall x\in[0,\bar x].
\]
This ensures that the \(p_0\)-quantile of the residual distribution lies above
the censored region by a positive margin.

This condition is natural in first-price auction markets. It does not require
competitors to systematically outbid the learner. It only requires that, with a
small but non-negligible probability, the impression is more valuable to some
other bidder than to the learner. In online advertising, this is common because
advertisers have heterogeneous objectives and information. For example, a user
may be a generic impression for one advertiser but a high-value retargeting
opportunity for another; a brand advertiser may value exposure differently from
a performance advertiser; and campaign pacing or budget pressure may make some
competitors bid aggressively on a small fraction of impressions. These sources
of heterogeneity naturally create a right tail in the highest competing bid
distribution.

Mathematically, Assumption~\ref{ass:visible-quantile} is also not restrictive.
For example, suppose \(z\sim \mathcal U[0,\bar z]\). Then
\[
G^{-1}(p_0)=p_0\bar z.
\]
The visibility condition holds whenever
\[
p_0\bar z
\ge
\sup_{x\in[0,\bar x]}\{f(x)-\alpha x\}
+
\Delta_q.
\]
Equivalently, the upper tail of the noise distribution only needs to extend
slightly beyond the largest possible value-minus-contextual-shift term.

As another example, suppose the noise is generated from a normal distribution
and then truncated to a bounded interval, as is common in numerical simulations.
Then the condition holds whenever the truncated distribution has enough mass
above
\[
\sup_{x\in[0,\bar x]}\{f(x)-\alpha x\}.
\]
Since we only require a \(1\%\) upper-tail probability, this is a weak
tail-overlap requirement rather than a strong dominance assumption.

The specific choice \(p_0=0.99\) is made for theoretical clarity. The algorithm
does not need to know the numerical value of \(G^{-1}(p_0)\) or the margin
\(\Delta_q\); it only uses the quantile index \(p_0\). In applications, the
same idea can be checked empirically from the initial exploration block, during
which bidding zero reveals the highest competing bid. If the empirical upper
tail of the observed competing bids lies safely above the learner's values,
then the high-quantile visibility condition is plausible.

\section{Proofs for Section \ref{sec:robust-regression}}
\label{app:robust-regression}

\subsection{Proof of Lemma \ref{lem:quantile-invariance}}
\label{app:quantile-invariance}

\begin{proof}
We decompose the argument into two parts. First, we show that all censored
residuals are uniformly below the target population quantile, up to the local
perturbation error. Second, we use a uniform empirical quantile concentration
event to show that they are also below the corresponding empirical quantile.

Recall that for any candidate parameter $\tilde\alpha$, the uncensored residual
is
\[
R_j^o(\tilde\alpha)
=
d_j-\tilde\alpha x_j,
\]
whereas the censored residual used by the estimator is
\[
R_j(\tilde\alpha)
=
\begin{cases}
d_j-\tilde\alpha x_j, & b_j\leq d_j,\\
-\infty, & b_j > d_j.
\end{cases}
\]
Thus, it suffices to show that whenever an observation is censored, its
uncensored residual lies below the empirical $p_0$-quantile of the uncensored
residuals.

Consider any censored sample $j\in A_i$, i.e., $b_j>d_j$. For any
$\tilde\alpha\in A_0$, where
\[
A_0=\{\tilde\alpha:|\tilde\alpha-\alpha|\le T^{-1/4}\},
\]
we have
\[
R_j^o(\tilde\alpha)
=
d_j-\tilde\alpha x_j
\le
b_j-\tilde\alpha x_j.
\]
By the value-bounded bidding rule in Algorithm~\ref{alg:budget-bidding},
\[
b_j\le v_j=f(x_j).
\]
Therefore,
\[
R_j^o(\tilde\alpha)
\le
f(x_j)-\tilde\alpha x_j
=
f(x_j)-\alpha x_j+(\alpha-\tilde\alpha)x_j.
\]
Since $\tilde\alpha\in A_0$ and $x_j\le \bar x$,
\[
|(\alpha-\tilde\alpha)x_j|
\le
\bar xT^{-1/4}.
\]
Hence every censored residual satisfies the deterministic upper bound
\[
R_j^o(\tilde\alpha)
\le
f(x_j)-\alpha x_j+
\bar xT^{-1/4},
\qquad
\forall \tilde\alpha\in A_0.
\]

It remains to lower bound the empirical $p_0$-quantile of the uncensored
residuals. Let
\[
\hat q_k^o(\tilde\alpha)
\]
be the empirical $p_0$-quantile of $\{R_j^o(\tilde\alpha):j\in I_k\}$, and let
\[
q_k^o(\tilde\alpha)
\]
be its population counterpart, for $k=1,2$.

By the uniform empirical quantile concentration event, with probability at
least $1-\delta$, for all $\tilde\alpha\in A_0$ and $k=1,2$,
\[
\hat q_k^o(\tilde\alpha)
\ge
q_k^o(\tilde\alpha)-2r_T(\delta).
\]

Since
\[
R_j^o(\tilde\alpha)
=
z_j+(\alpha-\tilde\alpha)x_j,
\]
and $|\tilde\alpha-\alpha|\le T^{-1/4}$ with $x_j\le \bar x$, the population
$p_0$-quantile satisfies
\[
q_k^o(\tilde\alpha)
\ge
G^{-1}(p_0)-\bar xT^{-1/4}.
\]
Therefore,
\[
\hat q_k^o(\tilde\alpha)
\ge
G^{-1}(p_0)-\bar xT^{-1/4}-2r_T(\delta).
\]
Absorbing constants into the definition of $r_T(\delta)$, we write this as
\[
\hat q_k^o(\tilde\alpha)
\ge
G^{-1}(p_0)-\bar xT^{-1/4}-4r_T(\delta).
\]

By Assumption~\ref{ass:visible-quantile},
\[
G^{-1}(p_0)
\ge
f(x_j)-\alpha x_j+\Delta_q.
\]
If $T$ is large enough such that
\[
2\bar xT^{-1/4}+4r_T(\delta)\le \Delta_q,
\]
then
\[
\hat q_k^o(\tilde\alpha)
\ge
f(x_j)-\alpha x_j+\bar xT^{-1/4}.
\]
Combining this with the deterministic upper bound on censored residuals, we
obtain
\[
R_j^o(\tilde\alpha)
\le
f(x_j)-\alpha x_j+\bar xT^{-1/4}
\le
\hat q_k^o(\tilde\alpha),
\]
for every censored sample $j\in A_i$, every $\tilde\alpha\in A_0$, and both
groups $k=1,2$.

Thus, all observations that are censored and replaced by $-\infty$ lie below the
empirical $p_0$-quantile of the corresponding uncensored residual sample.
Replacing these observations by $-\infty$ therefore does not change the
empirical $p_0$-quantile. Hence,
\[
\hat q_k(\tilde\alpha)
=
\hat q_k^o(\tilde\alpha),
\qquad
\forall \tilde\alpha\in A_0,\quad k=1,2.
\]
This proves the lemma.
\end{proof}

\subsection{Proof of Theorem \ref{thm:main-estimation}}
\label{app:main-estimation}

We first state the following auxiliary lemma:

\begin{lemma}[Uniform quantile concentration for uncensored residuals]
\label{lem:uniform-quantile-concentration}
Fix a phase $i$, and let $A_i$ be the sample set used to estimate
$\alpha$. Let $n=|A_i|$, and let $I_1,I_2$ be the median-split groups
used by Algorithm~\ref{alg:quantile}. For any
$\tilde\alpha\in\mathcal A_0$, define the uncensored residual
\[
R_j^o(\tilde\alpha)
=
d_j-\tilde\alpha x_j
=
z_j+(\alpha-\tilde\alpha)x_j .
\]
Let $\hat q_k^o(\tilde\alpha)$ be the empirical $p_0$-quantile of
$\{R_j^o(\tilde\alpha):j\in I_k\}$, and let
$q_k^o(\tilde\alpha)$ be its population counterpart, for $k=1,2$.
Suppose Assumption~\ref{ass:lipschitz} holds and the density of
$R_j^o(\tilde\alpha)$ is uniformly bounded away from zero in a
neighborhood of its $p_0$-quantile, uniformly over
$\tilde\alpha\in\mathcal A_0$ and $k=1,2$. Then there exists a constant
$C_q>0$ such that, for any $\delta\in(0,1)$, with probability at least
$1-\delta$,
\[
\sup_{\tilde\alpha\in\mathcal A_0}
\max_{k=1,2}
\left\|
\hat q_k^o(\tilde\alpha)
-
q_k^o(\tilde\alpha)
\right\|
\le
C_q
\sqrt{
\frac{\log(n/\delta)}{n}
}.
\]
\end{lemma}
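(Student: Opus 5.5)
We prove the uniform bound in three steps: a DKW-type concentration of the empirical CDF of the residuals, uniformly over the parameter; an inversion step from CDF deviations to quantile deviations using the density lower bound; and a cheap uniformization over the interval $\mathcal A_0$ via a grid.

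\textbf{Step 1 (pointwise concentration).} Conditionally on the median split, the indices in each group $I_k$ have size $n_k\ge \lfloor n/2\rfloor$. Within a group, the pairs $(x_j,z_j)$ are exchangeable and i.i.d. from the conditional law given $x\in I_k$. The median split makes them only approximately independent. We handle this either by splitting at the population median, whose empirical misclassification costs $O(\sqrt{\log(1/\delta)/n})$ in fraction, or by noting that conditioning on the order statistics leaves the within-group samples i.i.d. Fix $\tilde\alpha$ and write $F_{k,\tilde\alpha}$ for the CDF of $R^o(\tilde\alpha)=z+(\alpha-\tilde\alpha)x$ given $x\in I_k$. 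The DKW inequality gives, with probability at least $1-\delta'$,
\[
\sup_u|\hat F_{k,\tilde\alpha}(u)-F_{k,\tilde\alpha}(u)|\le \sqrt{\log(2/\delta')/(2n_k)}=:\epsilon .
\]

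\textbf{Step 2 (quantile inversion).} By hypothesis, $F_{k,\tilde\alpha}$ has density at least $c>0$ on a neighborhood $[q-\eta,q+\eta]$ of $q=q_k^o(\tilde\alpha)$. If $\epsilon<c\eta$, then
\[
F(q+\epsilon/c)\ge p_0+\epsilon \quad\text{and}\quad F(q-\epsilon/c)\le p_0-\epsilon .
\]
On the DKW event this forces $\hat q\in[q-\epsilon/c,\,q+\epsilon/c]$, up to a rounding term of order $1/n$ from the definition of the empirical quantile. Hence $|\hat q_k^o-q_k^o|\le C\sqrt{\log(1/\delta')/n}$. The density of the shifted residual is a mixture over $x$ of $g(\cdot-(\alpha-\tilde\alpha)x)$, which is at least $g_{\min}$. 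So Assumption~\ref{ass:lipschitz} already supplies the lower bound, and the hypothesis is consistent with it.

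\textbf{Step 3 (uniformity over $\mathcal A_0$).} This is the main obstacle: the supremum is over a continuum of $\tilde\alpha$. We take a grid $\mathcal N\subset\mathcal A_0$ of mesh $\gamma=1/n$, so $|\mathcal N|\le 2a_0 n+1\le n$ for large $T$, and apply Steps 1--2 with $\delta'=\delta/(2|\mathcal N|)$ and a union bound over $\mathcal N$ and $k$. This produces the $\log(n/\delta)$ factor.

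For off-grid $\tilde\alpha$ with nearest grid point $\tilde\alpha'$, we have $|R_j^o(\tilde\alpha)-R_j^o(\tilde\alpha')|\le \bar x\gamma$ for every $j$. Since order statistics are $1$-Lipschitz in the sup-norm of the data, $|\hat q_k^o(\tilde\alpha)-\hat q_k^o(\tilde\alpha')|\le\bar x/n$. Similarly, the population quantile is $\bar x$-Lipschitz in $\tilde\alpha$, because $R^o(\tilde\alpha)$ and $R^o(\tilde\alpha')$ differ by at most $\bar x|\tilde\alpha-\tilde\alpha'|$ almost surely. Both discretization errors are therefore $O(1/n)$, which is dominated by $\sqrt{\log(n/\delta)/n}$.

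Combining the three steps gives the claim with $C_q$ depending on $g_{\min}$ and $\bar x$.
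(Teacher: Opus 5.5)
Your proof is correct, but it is organized differently from the paper's.

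\textbf{The paper's route.} The paper conditions on all realized contexts $\{x_j\}_{j\in A_i}$, so the median split is deterministic and only the i.i.d.\ noise is random. Its target is the quantile of the averaged conditional CDF $F^o_{k,\tilde\alpha}(y)=n_k^{-1}\sum_{j\in I_k}G\bigl(y-(\alpha-\tilde\alpha)x_j\bigr)$. It gets uniformity over $(\tilde\alpha,y)$ in one shot from a VC-type deviation inequality for the two-parameter class $\{\mathbf 1\{z\le y-(\alpha-\tilde\alpha)x\}\}$, with independent but non-identically distributed summands. It then applies the same density-based inversion as your Step~2.

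\textbf{Your route.} You apply DKW at each fixed $\tilde\alpha$ and union-bound over a $1/n$-grid. You then extend off the grid deterministically: every residual moves by at most $\bar x\gamma$, so both the empirical and the population quantile do too.

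\textbf{What each buys.} Your argument is more elementary: it needs no VC theory, only the one-dimensional Lipschitz structure in $\tilde\alpha$. It also targets the genuine conditional quantile given $x\in I_k$. That is the object in Assumption~\ref{ass:identifiability}, which is used in Step~4 of the proof of Theorem~\ref{thm:main-estimation}. The paper's conditioning removes the median-split dependence more cleanly, but its target quantile depends on the realized contexts.

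\textbf{A point to tighten in Step~1.} The conditioning that restores i.i.d.\ structure is on the median order statistic $\hat m$ alone. Given $\hat m$, the points below it are i.i.d.\ from the law truncated at $\hat m$. Moving the truncation point from $\hat m$ to the population median $m$ then costs $O(\sqrt{\log(1/\delta)/n})$ in CDF. Conditioning on all order statistics would instead fix the context multiset. That puts you back in the non-identically-distributed setting, where plain DKW does not apply.

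Your population-median alternative also works. The two splits are nested and, by Hoeffding, differ by $O(\sqrt{n\log(1/\delta)})$ indices. This perturbs each group's empirical CDF by $O(\sqrt{\log(1/\delta)/n})$, uniformly in $\tilde\alpha$ and $y$.
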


\begin{proof}[Proof of Lemma~\ref{lem:uniform-quantile-concentration}]
We condition on the realized contexts $\{x_j:j\in A_i\}$. After this
conditioning, the median-split groups $I_1,I_2$ are fixed, and the only
remaining randomness comes from the independent noises $\{z_j:j\in A_i\}$.

Fix a group $I_k$, and write $n_k=|I_k|$. Since the groups are formed by
a median split, $n_k\ge n/3$ for $n$ sufficiently large, up to a
deterministic tie-breaking rule. For any $\tilde\alpha\in\mathcal A_0$,
define the conditional population distribution function
\[
F_{k,\tilde\alpha}^o(y)
=
\frac{1}{n_k}
\sum_{j\in I_k}
\mathbb P\!\left(
z_j+(\alpha-\tilde\alpha)x_j\le y
\,\middle|\,
x_j
\right),
\]
and its empirical counterpart
\[
\hat F_{k,\tilde\alpha}^o(y)
=
\frac{1}{n_k}
\sum_{j\in I_k}
\mathbf 1
\left\{
z_j+(\alpha-\tilde\alpha)x_j\le y
\right\}.
\]
Then $q_k^o(\tilde\alpha)$ is the $p_0$-quantile of
$F_{k,\tilde\alpha}^o$, and $\hat q_k^o(\tilde\alpha)$ is the empirical
$p_0$-quantile of $\hat F_{k,\tilde\alpha}^o$.

Consider the class of indicator functions
\[
\mathcal F
=
\left\{
(z,x)\mapsto
\mathbf 1\{z+(\alpha-\tilde\alpha)x\le y\}
:
\tilde\alpha\in\mathcal A_0,\ y\in\mathbb R
\right\}.
\]
Equivalently,
\[
\mathbf 1\{z+(\alpha-\tilde\alpha)x\le y\}
=
\mathbf 1\{z\le y-(\alpha-\tilde\alpha)x\},
\]
so $\mathcal F$ is a class of threshold sets with two real parameters
$(\tilde\alpha,y)$. Its VC dimension is bounded by a universal constant.
Therefore, by a VC-type uniform concentration inequality for bounded
independent random variables, applied conditionally on the realized
contexts, there exists a constant $C>0$ such that, with probability at
least $1-\delta$,
\[
\max_{k=1,2}
\sup_{\tilde\alpha\in\mathcal A_0}
\sup_{y\in\mathbb R}
\left\|
\hat F_{k,\tilde\alpha}^o(y)
-
F_{k,\tilde\alpha}^o(y)
\right\|
\le
C
\sqrt{
\frac{\log(n/\delta)}{n}
}.
\]
Denote the right-hand side by $\varepsilon_n$.

It remains to convert this uniform CDF concentration into a uniform
quantile concentration bound. By the assumed lower bound on the residual
density around the $p_0$-quantile, there exists $g_{\min}>0$ such that,
for every $\tilde\alpha\in\mathcal A_0$ and $k=1,2$, the distribution
$F_{k,\tilde\alpha}^o$ satisfies
\[
F_{k,\tilde\alpha}^o(q_k^o(\tilde\alpha)+u)
\ge
p+g_{\min}u,
\qquad u\ge0
\]
and
\[
F_{k,\tilde\alpha}^o(q_k^o(\tilde\alpha)-u)
\le
p-g_{\min}u,
\qquad u\ge0,
\]
for all sufficiently small $u$. Taking
$u=\varepsilon_n/g_{\min}$, the uniform CDF concentration implies
\[
\hat F_{k,\tilde\alpha}^o(q_k^o(\tilde\alpha)+u)\ge p_0
\]
and
\[
\hat F_{k,\tilde\alpha}^o(q_k^o(\tilde\alpha)-u)<p_0 .
\]
By the definition of the empirical $p_0$-quantile,
\[
q_k^o(\tilde\alpha)-u
\le
\hat q_k^o(\tilde\alpha)
\le
q_k^o(\tilde\alpha)+u .
\]
Thus,
\[
\left\|
\hat q_k^o(\tilde\alpha)-q_k^o(\tilde\alpha)
\right\|
\le
\frac{\varepsilon_n}{g_{\min}} .
\]
Taking the supremum over $\tilde\alpha\in\mathcal A_0$ and $k=1,2$, and
absorbing constants into $C_q$, gives
\[
\sup_{\tilde\alpha\in\mathcal A_0}
\max_{k=1,2}
\left\|
\hat q_k^o(\tilde\alpha)
-
q_k^o(\tilde\alpha)
\right\|
\le
C_q
\sqrt{
\frac{\log(n/\delta)}{n}
}.
\]
Finally, since the preceding argument was conditional on the realized
contexts, integrating over the contexts gives the same unconditional
probability bound.
\end{proof}

\begin{proof}[Proof of Theorem~\ref{thm:main-estimation}]
Recall that the estimator is defined by
\[
\hat \alpha_n
=
\arg\min_{\tilde{\alpha}\in\mathcal A_0}
\hat Q_n(\tilde{\alpha}),
\]
where
\[
\hat Q_n(\tilde{\alpha})
=
\left\|
\hat q_1(\tilde{\alpha})
-
\hat q_2(\tilde{\alpha})
\right\|.
\]
Here $\hat q_k(\tilde{\alpha})$ denotes the empirical $p_0$-quantile
computed from the censored residuals in group $k=1,2$.

The proof combines two high-probability events. The first event is the
quantile-invariance event from Lemma~\ref{lem:quantile-invariance},
which allows us to replace the censored empirical quantiles by their
uncensored counterparts. The second event is the uniform concentration
event for the uncensored empirical quantiles, established in
Lemma~\ref{lem:uniform-quantile-concentration}.

Let $\mathcal E_1$ be the quantile-invariance event in
Lemma~\ref{lem:quantile-invariance}, namely,
\[
\hat q_k(\tilde{\alpha})
=
\hat q_k^o(\tilde{\alpha}),
\qquad
\forall \tilde{\alpha}\in\mathcal A_0,\; k=1,2.
\]
By Lemma~\ref{lem:quantile-invariance}, under
Assumption~\ref{ass:visible-quantile}, the value-bounded bidding rule
\[
b_j\le v_j,\qquad j\in A_i,
\]
and the condition
\[
2\bar x T^{-1/4}
+
4r_T(\delta/2)
\le
\Delta_q,
\]
we have
\[
\mathbb P(\mathcal E_1)\ge 1-\delta/2.
\]

Next, let $n=|A_i|$, and define
\[
r_n(\delta)
:=
C_q
\sqrt{
\frac{\log(n/\delta)}{n}
}.
\]
Let $\mathcal E_2$ be the uniform concentration event for the uncensored
empirical quantiles:
\[
\mathcal E_2
:=
\left\{
\sup_{\tilde{\alpha}\in\mathcal A_0}
\max_{k=1,2}
\left\|
\hat q_k^o(\tilde{\alpha})
-
q_k^o(\tilde{\alpha})
\right\|
\le
r_n(\delta/2)
\right\}.
\]
By Lemma~\ref{lem:uniform-quantile-concentration},
\[
\mathbb P(\mathcal E_2)\ge 1-\delta/2.
\]
Therefore, by a union bound,
\[
\mathbb P(\mathcal E_1\cap\mathcal E_2)
\ge
1-\delta.
\]
In the remainder of the proof, we work on the event
$\mathcal E_1\cap\mathcal E_2$.

\medskip
\noindent
\textbf{Step 1: Reduction to uncensored residuals.}
On $\mathcal E_1$, for every $\tilde{\alpha}\in\mathcal A_0$ and
$k=1,2$, the empirical quantile computed from the censored residuals
coincides with the empirical quantile computed from the uncensored
residuals:
\[
\hat q_k(\tilde{\alpha})
=
\hat q_k^o(\tilde{\alpha}).
\]

For completeness, we recall why this event follows from
Lemma~\ref{lem:quantile-invariance}. For any censored sample $j\in A_i$,
we have $d_j\le b_j\le v_j$. Hence, for any
$\tilde\alpha\in\mathcal A_0$,
\[
d_j-\tilde\alpha x_j
\le
v_j-\tilde\alpha x_j
=
f(x_j)-\alpha x_j+(\alpha-\tilde\alpha)x_j
\le
f(x_j)-\alpha x_j+\bar xT^{-1/4}.
\]
By Assumption~\ref{ass:visible-quantile},
\[
G^{-1}(p_0)\ge f(x_j)-\alpha x_j+\Delta_q,
\]
and the condition
\[
2\bar xT^{-1/4}+4r_T(\delta/2)\le \Delta_q
\]
ensures that all censored residuals lie below the empirical
$p_0$-quantile of the corresponding uncensored residuals. Therefore,
replacing censored observations by $-\infty$ does not change the
empirical $p_0$-quantiles.

Consequently, for every $\tilde{\alpha}\in\mathcal A_0$,
\[
\hat Q_n(\tilde{\alpha})
=
\left\|
\hat q_1(\tilde{\alpha})
-
\hat q_2(\tilde{\alpha})
\right\|
=
\left\|
\hat q_1^o(\tilde{\alpha})
-
\hat q_2^o(\tilde{\alpha})
\right\|.
\]
Thus it suffices to analyze the estimator through the uncensored
residuals.

Define the population objective
\[
Q^o(\tilde{\alpha})
:=
\left\|
q_1^o(\tilde{\alpha})
-
q_2^o(\tilde{\alpha})
\right\|.
\]

\medskip
\noindent
\textbf{Step 2: Uniform approximation of the objective.}
On $\mathcal E_2$, for every $\tilde{\alpha}\in\mathcal A_0$,
\[
\begin{aligned}
\left\|
\hat Q_n(\tilde{\alpha})
-
Q^o(\tilde{\alpha})
\right\|
&=
\left\|
\left\|
\hat q_1^o(\tilde{\alpha})
-
\hat q_2^o(\tilde{\alpha})
\right\|
-
\left\|
q_1^o(\tilde{\alpha})
-
q_2^o(\tilde{\alpha})
\right\|
\right\|\\
&\le
\left\|
\hat q_1^o(\tilde{\alpha})
-
q_1^o(\tilde{\alpha})
\right\|
+
\left\|
\hat q_2^o(\tilde{\alpha})
-
q_2^o(\tilde{\alpha})
\right\|\\
&\le
2r_n(\delta/2).
\end{aligned}
\]
Therefore,
\[
\sup_{\tilde{\alpha}\in\mathcal A_0}
\left\|
\hat Q_n(\tilde{\alpha})
-
Q^o(\tilde{\alpha})
\right\|
\le
2r_n(\delta/2)
=: \epsilon_n.
\]

\medskip
\noindent
\textbf{Step 3: Value of the population objective at the true parameter.}
At $\tilde{\alpha}=\alpha$, the uncensored residual reduces to
\[
R_i^o(\alpha)
=
d_i-\alpha x_i
=
z_i.
\]
Since the distribution of $z_i$ does not depend on the group assignment,
the two group-wise population $p_0$-quantiles coincide:
\[
q_1^o(\alpha)
=
q_2^o(\alpha).
\]
Therefore,
\[
Q^o(\alpha)=0.
\]

Since $\hat \alpha_n$ minimizes $\hat Q_n(\tilde{\alpha})$ over
$\mathcal A_0$, we have
\[
\hat Q_n(\hat \alpha_n)
\le
\hat Q_n(\alpha).
\]
Combining this with the uniform approximation bound yields
\[
\hat Q_n(\hat \alpha_n)
\le
\hat Q_n(\alpha)
\le
Q^o(\alpha)+\epsilon_n
=
\epsilon_n.
\]

\medskip
\noindent
\textbf{Step 4: Lower bound via identifiability.}
By Assumption~\ref{ass:identifiability}, for every
$\tilde{\alpha}\in\mathcal A_0$,
\[
Q^o(\tilde{\alpha})-Q^o(\alpha)
\ge
\kappa \|\tilde{\alpha}-\alpha\|.
\]
Since $Q^o(\alpha)=0$, this gives
\[
Q^o(\tilde{\alpha})
\ge
\kappa \|\tilde{\alpha}-\alpha\|.
\]

Applying this inequality to $\hat\alpha_n$, and using the uniform
approximation bound again, we obtain
\[
\hat Q_n(\hat\alpha_n)
\ge
Q^o(\hat\alpha_n)-\epsilon_n
\ge
\kappa\|\hat\alpha_n-\alpha\|-\epsilon_n.
\]

\medskip
\noindent
\textbf{Step 5: Conclude the estimation error bound.}
Combining the upper and lower bounds on $\hat Q_n(\hat\alpha_n)$, we get
\[
\kappa\|\hat\alpha_n-\alpha\|-\epsilon_n
\le
\hat Q_n(\hat\alpha_n)
\le
\epsilon_n.
\]
Therefore,
\[
\|\hat\alpha_n-\alpha\|
\le
\frac{2\epsilon_n}{\kappa}.
\]

Substituting $\epsilon_n=2r_n(\delta/2)$, we obtain
\[
\|\hat\alpha_n-\alpha\|
\le
\frac{4}{\kappa}r_n(\delta/2)
=
C
\sqrt{
\frac{\log(2n/\delta)}{n}
}.
\]

Since $n=|A_i|$, this gives
\[
\|\hat\alpha_n-\alpha\|
\le
C
\sqrt{
\frac{\log(|A_i|/\delta)}{|A_i|}
}.
\]
This completes the proof. \hfill $\square$
\end{proof}
\section{Proofs for Section \ref{sec:algorithm}}
\subsection{Proof of Shifted-bid Monotonicity}
\label{app:shifted-bid-monotonicity}

We first state the monotonicity property of the oracle shifted bid used in
Section~\ref{sec:algorithm}.

\begin{lemma}[Monotonicity of oracle shifted bids]
\label{lem:residual-monotonicity}
For each value bin \(m\), let
\[
\tilde b_m^\star
\in
\arg\max_{\tilde b\in\widetilde{\mathcal B}}
\tilde r(v^m,\tilde b),
\qquad
\tilde r(v^m,\tilde b)
=
\bigl(v^m-(\tilde b+\alpha x^m)\bigr)G(\tilde b),
\]
where \(x^m=f^{-1}(v^m)\). Under
Assumptions~\ref{ass:Superlinear} and~\ref{ass:lipschitz}, there exists a
selection of oracle shifted bids such that
\[
m_1<m_2
\quad\Longrightarrow\quad
\tilde b_{m_1}^\star\le \tilde b_{m_2}^\star .
\]
\end{lemma}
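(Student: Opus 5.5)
The plan is a monotone comparative statics argument, via supermodularity and Topkis' theorem. Write the objective as
\[
\tilde r(v^m,\tilde b)=\bigl(c_m-\tilde b\bigr)G(\tilde b),
\qquad
c_m:=v^m-\alpha x^m=f(x^m)-\alpha x^m .
\]
Here $\tilde b$ ranges over the finite chain $\widetilde{\mathcal B}$, so a maximizer always exists. The whole dependence on $m$ is then carried by the scalar $c_m$.

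\textbf{Step 1: $c_m$ is increasing in $m$.} Since $v^{m_1}<v^{m_2}$ and $f$ is invertible, we expect $x^{m_1}\neq x^{m_2}$. I would take $f$ to be increasing (implicit in the model, since $x^m=f^{-1}(v^m)$ must respect the ordering of value bins), which gives $x^{m_1}<x^{m_2}$. Assumption~\ref{ass:Superlinear} then yields
\[
f(x^{m_2})-f(x^{m_1})>\alpha(x^{m_2}-x^{m_1}),
\]
that is, $c_{m_2}>c_{m_1}$. This is the step I expect to be the main obstacle. The paper never states explicitly that $f$ is increasing, so I would either add this as a standing convention or argue it from the superlinear condition together with the requirement that the value grid maps monotonically to contexts.

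\textbf{Step 2: increasing differences.} For $\tilde b'>\tilde b$ in the chain and $c'>c$, compute
\[
\bigl[(c'-\tilde b')G(\tilde b')-(c'-\tilde b)G(\tilde b)\bigr]
-\bigl[(c-\tilde b')G(\tilde b')-(c-\tilde b)G(\tilde b)\bigr]
=(c'-c)\bigl(G(\tilde b')-G(\tilde b)\bigr).
\]
This quantity is $\ge 0$ because $G$ is a CDF, hence nondecreasing. In fact it is strictly positive wherever $g\ge g_{\min}>0$ on the support, which is guaranteed by Assumption~\ref{ass:lipschitz}. So $\tilde r(v^m,\cdot)$ has increasing differences in $(\tilde b,m)$.

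\textbf{Step 3: apply Topkis.} Topkis' monotonicity theorem on a chain implies that the largest maximizer $\bar b_m:=\max\arg\max_{\tilde b}\tilde r(v^m,\tilde b)$ is nondecreasing in $m$, and likewise the smallest maximizer. I would prove this directly to stay self-contained. Suppose $m_1<m_2$ but $\bar b_{m_1}>\bar b_{m_2}$. Optimality at $m_1$ gives
\[
\tilde r(v^{m_1},\bar b_{m_1})\ge \tilde r(v^{m_1},\bar b_{m_2}).
\]
Adding the increasing-differences inequality transfers this to
\[
\tilde r(v^{m_2},\bar b_{m_1})\ge \tilde r(v^{m_2},\bar b_{m_2}).
\]
Hence $\bar b_{m_1}$ is also a maximizer at $m_2$ and is strictly larger than $\bar b_{m_2}$, contradicting the definition of $\bar b_{m_2}$ as the largest maximizer. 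Choosing the selection $\tilde b^\star_m:=\bar b_m$ therefore gives the claim. Ties are exactly why the lemma speaks of ``a selection'' rather than every selection.
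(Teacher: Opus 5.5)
Your proposal is correct and is essentially the paper's argument: you use the same reduction to the scalar $A_m=f(x^m)-\alpha x^m$, the same cross-difference identity $(A_2-A_1)\bigl(G(\tilde b_2)-G(\tilde b_1)\bigr)$, and the same exchange contradiction. The one difference is in handling ties: the paper invokes strict monotonicity of $G$ (Assumption~\ref{ass:lipschitz}) to conclude that \emph{every} pair of maximizers is ordered, whereas your largest-maximizer selection needs only that $G$ is nondecreasing. Your version is actually the safer one, because grid points of $\widetilde{\mathcal B}$ can lie outside the support of $G$, where $G$ is flat and maximizers can tie.

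Your Step~1 concern is a step the paper also takes without comment (``hence $x^{m_1}<x^{m_2}$''). It is settled whenever $\alpha\ge0$, which the paper assumes elsewhere, because Assumption~\ref{ass:Superlinear} then gives $f(x_1)-f(x_2)>\alpha(x_1-x_2)\ge0$ for $x_1>x_2$.
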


\begin{proof}
For each value bin \(m\), define the shifted surplus level
\[
A_m:=v^m-\alpha x^m .
\]
Since \(x^m=f^{-1}(v^m)\), we have
\[
A_m=f(x^m)-\alpha x^m .
\]
By Assumption~\ref{ass:Superlinear}, for any \(m_1<m_2\), we have
\(v^{m_1}<v^{m_2}\), hence \(x^{m_1}<x^{m_2}\), and
\[
f(x^{m_2})-f(x^{m_1})
>
\alpha(x^{m_2}-x^{m_1}).
\]
Therefore,
\[
A_{m_2}-A_{m_1}
=
\bigl(f(x^{m_2})-\alpha x^{m_2}\bigr)
-
\bigl(f(x^{m_1})-\alpha x^{m_1}\bigr)
>
0.
\]
Thus \(A_m\) is strictly increasing in the value-bin index \(m\).

Using \(A_m\), the oracle shifted-coordinate reward can be written as
\[
\tilde r(v^m,\tilde b)
=
(A_m-\tilde b)G(\tilde b).
\]
For notational simplicity, define
\[
\Phi(A,\tilde b):=(A-\tilde b)G(\tilde b).
\]
We show that the maximizers of \(\Phi(A,\tilde b)\) are monotone in \(A\).

Take two bins \(m_1<m_2\), and write
\[
A_1:=A_{m_1},
\qquad
A_2:=A_{m_2}.
\]
Then \(A_1<A_2\). Let
\[
\tilde b_1\in\arg\max_{\tilde b\in\widetilde{\mathcal B}}\Phi(A_1,\tilde b),
\qquad
\tilde b_2\in\arg\max_{\tilde b\in\widetilde{\mathcal B}}\Phi(A_2,\tilde b).
\]
We claim that one can select maximizers satisfying \(\tilde b_1\le \tilde b_2\).

Suppose, toward a contradiction, that there exist maximizers with
\[
\tilde b_1>\tilde b_2.
\]
By optimality of \(\tilde b_1\) at \(A_1\),
\[
\Phi(A_1,\tilde b_1)\ge \Phi(A_1,\tilde b_2).
\]
By optimality of \(\tilde b_2\) at \(A_2\),
\[
\Phi(A_2,\tilde b_2)\ge \Phi(A_2,\tilde b_1).
\]
Adding the two inequalities gives
\[
\Phi(A_2,\tilde b_2)-\Phi(A_2,\tilde b_1)
+
\Phi(A_1,\tilde b_1)-\Phi(A_1,\tilde b_2)
\ge 0.
\]
On the other hand, by the definition of \(\Phi\),
\[
\begin{aligned}
&\Phi(A_2,\tilde b_2)-\Phi(A_2,\tilde b_1)
+
\Phi(A_1,\tilde b_1)-\Phi(A_1,\tilde b_2)
\\
&=
(A_2-A_1)\bigl(G(\tilde b_2)-G(\tilde b_1)\bigr).
\end{aligned}
\]
Since \(A_2>A_1\) and \(G\) is nondecreasing, the assumption
\(\tilde b_1>\tilde b_2\) implies
\[
G(\tilde b_2)-G(\tilde b_1)\le 0.
\]
Therefore,
\[
(A_2-A_1)\bigl(G(\tilde b_2)-G(\tilde b_1)\bigr)\le 0.
\]
Combining this with the previous lower bound, the expression must equal zero.
Because Assumption~\ref{ass:lipschitz} states that the density \(g\) is bounded
away from zero on the relevant support, \(G\) is strictly increasing on the
relevant shifted-bid domain. Hence
\[
G(\tilde b_2)=G(\tilde b_1)
\]
implies \(\tilde b_2=\tilde b_1\), contradicting \(\tilde b_1>\tilde b_2\).

Thus the maximizer correspondence is monotone: when \(A\) increases, the
oracle shifted bid cannot decrease. Since \(A_m\) is strictly increasing in
\(m\), there exists a selection of oracle shifted bids
\(\{\tilde b_m^\star\}_{m=0}^M\) such that
\[
m_1<m_2
\quad\Longrightarrow\quad
\tilde b_{m_1}^\star\le \tilde b_{m_2}^\star .
\]
This proves the lemma.
\end{proof}
\section{Proofs for Section \ref{sec:regret-analysis}}
\label{app:algorithm}
\subsection{Auxiliary Lemmas for Theorem~\ref{thm:main-regret}}
\label{app:main-regret-auxiliary-lemmas}

We first collect the auxiliary lemmas used in the proof of
Theorem~\ref{thm:main-regret}. The lemma names and labels are kept unchanged
from the previous version. Their proofs are deferred to the subsequent appendix
subsections.

Throughout this section, for \(t\in A_i\cup B_i\), write
\[
\widehat c_t
:=
\widehat c_{i-1}(v^{m(t)},\tilde b_t;x^{m(t)}),
\qquad
\widehat r_t
:=
\widehat r_{i-1}(v^{m(t)},\tilde b_t;x^{m(t)}),
\]
where the estimators are evaluated at the shifted bid selected by
Algorithm~\ref{alg:budget-bidding}. Also define
\[
\beta_i
:=
C_\alpha
\sqrt{\frac{\log(n|A_i|/\delta)}{|A_i|}},
\qquad i\ge 1,
\]
and
\[
\beta_0
:=
C_0T^{-1/4}\sqrt{\log(T/\delta)} .
\]
The event
\[
\mathcal E_\alpha
:=
\left\{
|\widehat\alpha_i-\alpha|\le \beta_i,
\quad i=0,1,\ldots,n
\right\}
\]
is the parameter-estimation good event.

\begin{lemma}[Dual variable boundedness]
\label{lem:dual-bound}
On the good estimation event \(\mathcal E_\alpha\), the dual iterates generated
by Algorithm~\ref{alg:budget-bidding} satisfy
\[
0\le \lambda_t\le \bar\lambda,
\qquad
t=1,\ldots,T,
\]
where
\[
\bar\lambda:=\frac{2\bar v}{\rho}+1 .
\]
The proof is given in Appendix~\ref{app:dual-bound}.
\end{lemma}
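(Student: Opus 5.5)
The bound $\lambda_t\ge 0$ is immediate from the projection $\max\{0,\cdot\}$ in the dual update, so the proof concerns only the upper bound. The update is
\[
\lambda_{t+1}=\max\{0,\lambda_t-\eta(\rho-\widehat c_t)\},
\]
so $\lambda$ increases only when $\widehat c_t>\rho$, and each step changes $\lambda$ by at most $\eta\,\sup_t|\rho-\widehat c_t|$. By Assumption~\ref{ass:bounded} and the construction of $\widehat c_{i-1}$ in~\eqref{eq:c-estimator}, every estimated cost is an average of terms $(\tilde b^k+\widehat\alpha_{i-1}x^{m})$ lying in $[0,\bar v]$; this uses the feasibility constraint $0\le\tilde b+\widehat\alpha_{i-1}x^{m(t)}\le v^{m(t)}$ in the bid selection. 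Hence $|\rho-\widehat c_t|\le \bar v+\rho$, and a single step moves $\lambda$ by at most $\eta(\bar v+\rho)$, which is at most $1$ once $T$ is large. This per-step bound accounts for the additive ``$+1$'' in $\bar\lambda$.

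The main step is a threshold argument. I would show that whenever $\lambda_t>2\bar v/\rho$, the next step does not increase $\lambda$, i.e.\ $\widehat c_t\le\rho$. With $\lambda_t$ that large, the algorithm bids in the bin $v^{m(t)}\le v_t/(1+\lambda_t)<\rho/2$ (taking $\rho\le\bar v$ without loss of generality). Any feasible bid satisfies $b_t=\tilde b_t+\widehat\alpha_{i-1}x^{m(t)}\le v^{m(t)}<\rho/2$. Because the estimator $\widehat c_{i-1}(v^{m(t)},\tilde b_t)$ is evaluated at the representative context $x^{m(t)}$ with the same shift $\tilde b_t+\widehat\alpha_{i-1}x^{m(t)}$, it is bounded by that bid times an empirical frequency of at most one, so $\widehat c_t\le\rho/2<\rho$. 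A subtlety is that the estimator uses $\widehat\alpha_{i-1}$ while the bid also uses $\widehat\alpha_{i-1}$, so the two shifts coincide. This coincidence is where the event $\mathcal E_\alpha$ enters: it keeps the relevant shifts within $\bar x\beta_{i-1}$ so that feasibility and the $[0,v^{m}]$ range are consistent, and any residual mismatch of order $\bar x\beta_i=o(1)$ can be absorbed into the $\rho/2$ slack.

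To conclude, I would run induction on $t$ starting from $\lambda_{|T_0|}=0$. Suppose $\lambda_t\le\bar\lambda$. If $\lambda_t\le 2\bar v/\rho$, then $\lambda_{t+1}\le 2\bar v/\rho+\eta(\bar v+\rho)\le\bar\lambda$. If $\lambda_t\in(2\bar v/\rho,\bar\lambda]$, the threshold step gives $\lambda_{t+1}\le\lambda_t\le\bar\lambda$. The main obstacle I expect is the second step, specifically verifying rigorously that the estimated cost (rather than the true cost) stays below $\rho$ when $\lambda$ is large. This requires the feasibility constraint in the bid selection to bound the estimator's argument, and it requires handling the edge case where the feasible set is empty, which I would treat by defaulting to $b_t=0$ so that $\widehat c_t=0$.
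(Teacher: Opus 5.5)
Your proposal is correct and follows the paper's argument: above a threshold on $\lambda_t$ one has $\widehat c_t\le\bar v/(1+\lambda_t)<\rho$, so $\lambda$ cannot increase, and below it a single step overshoots by at most $\eta\cdot O(\bar v+\rho)\le 1$. The paper uses the threshold $2\bar v/\rho-1$ instead of $2\bar v/\rho$. It also bounds $\widehat c_t\le b_t+\bar x|\widehat\alpha_{i-1}-\alpha|$ and uses $\mathcal E_\alpha$ to keep the extra term below $\rho/2$. Your observation that the multiplier $\tilde b_t+\widehat\alpha_{i-1}x^{m(t)}$ in $\widehat c_{i-1}$ is exactly $b_t$ gives $\widehat c_t\le b_t$ directly, so the good event is not actually needed for this step.
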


\begin{lemma}[Relaxed stationary benchmark]
\label{lem:relaxed-stationary-benchmark}
Let \(\pi^\star\) be an optimal policy for the original hard-budget problem.
Define the relaxed benchmark value
\[
R^\star_{\mathrm{rel}}
=
T\cdot
\min_{\lambda\in[0,\bar\lambda]}
\left\{
\mathbb E_v
\left[
\max_{0\le b\le v}
(v-(1+\lambda)b)G(b-\alpha x)
\right]
+\lambda\rho
\right\}.
\]
Then
\[
R(\pi^\star)\le R^\star_{\mathrm{rel}}.
\]
Moreover, for each fixed \(\lambda\), the inner maximization admits a stationary
pointwise optimizer depending only on the current \((v,x)\).

Let \(\lambda^\star\) be a minimizer of the relaxed dual objective, and let
\[
b^\star(v,x)
\in
\arg\max_{0\le b\le v}
(v-(1+\lambda^\star)b)G(b-\alpha x)
\]
be a measurable maximizer. Then \(b^\star\) can be chosen so that
\[
\mathbb E[c(v_t,b^\star(v_t,x_t))]\le \rho .
\]
In addition, the pair \((\lambda^\star,b^\star)\) satisfies the complementary
slackness condition
\[
\lambda^\star
\left(
\rho-\mathbb E[c(v_t,b^\star(v_t,x_t))]
\right)=0.
\]
Consequently,
\[
R^\star_{\mathrm{rel}}
=
T\,\mathbb E[r(v_t,b^\star(v_t,x_t))].
\]
The proof is given in Appendix~\ref{app:relaxed-stationary-benchmark}.
\end{lemma}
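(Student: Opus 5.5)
The plan is to treat the relaxed objective as a one-dimensional convex dual function and read every claim off its first-order conditions. Define
\[
D(\lambda):=\mathbb E_v\Bigl[\max_{0\le b\le v}\bigl(v-(1+\lambda)b\bigr)G(b-\alpha x)\Bigr]+\lambda\rho ,
\qquad \lambda\ge 0 .
\]

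\textbf{Step 1: weak duality.} Take any budget-feasible $\pi$ and any $\lambda\ge0$. Pathwise $\sum_t b_t\I\{b_t>d_t\}\le B=\rho T$, so
\[
R(\pi)\le \mathbb E\sum_t\Bigl[\I\{b_t>d_t\}\bigl(v_t-(1+\lambda)b_t\bigr)+\lambda\rho\Bigr].
\]
Condition on the history and $x_t$; the bid $b_t$ is chosen before $d_t$ is drawn, and $z_t$ is independent. Each summand is therefore at most $\max_b (v_t-(1+\lambda)b)G(b-\alpha x_t)+\lambda\rho$, where restricting to $0\le b\le v$ loses nothing because bids above $v$ give negative reward. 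Since $x_t$ is i.i.d., summing gives $R(\pi)\le T D(\lambda)$ for every $\lambda\ge 0$. Minimizing over $\lambda\in[0,\bar\lambda]$ then yields $R(\pi^\star)\le R^\star_{\mathrm{rel}}$. The inner problem depends only on $(v,x)$. Continuity of $G$ and compactness of $[0,v]$ give a maximizer, and a measurable selection theorem gives a measurable one (for instance the smallest or largest maximizer); this is the stationary pointwise optimizer.

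\textbf{Step 2: first-order conditions.} $D$ is convex, as a pointwise supremum of affine functions of $\lambda$ integrated and plus a linear term, and it is continuous. Hence a minimizer $\lambda^\star$ on $[0,\bar\lambda]$ exists. By Danskin's theorem, the one-sided derivatives are
\[
D'_{+}(\lambda)=\rho-\mathbb E[c(v,\underline b_\lambda)],\qquad D'_{-}(\lambda)=\rho-\mathbb E[c(v,\overline b_\lambda)],
\]
where $\underline b_\lambda,\overline b_\lambda$ are the smallest and largest maximizers.

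I then rule out $\lambda^\star=\bar\lambda$. Any maximizer with positive payoff satisfies $(1+\lambda)b\le v$, so $c\le b\le \bar v/(1+\bar\lambda)<\rho/2$. Thus $D'_{-}(\bar\lambda)>0$, and $D$ strictly increases near $\bar\lambda$. There are two remaining cases.
\begin{itemize}
\item If $\lambda^\star=0$, optimality gives $D'_{+}(0)\ge 0$. Choosing $b^\star=\underline b_0$ then gives $\mathbb E c\le\rho$, and slackness holds trivially because $\lambda^\star=0$.
\item If $\lambda^\star\in(0,\bar\lambda)$, then $\rho\in[\mathbb E c(\underline b),\mathbb E c(\overline b)]$.
\end{itemize}

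\textbf{Step 3: interior case and the reward identity.} In the interior case I would choose $b^\star$ as the lower maximizer on part of the $(v,x)$-space and the upper maximizer on the rest. Sweeping a threshold in $v$ (or in $x$, using non-atomicity of the context distribution) varies $\mathbb E c$ continuously between the two endpoints, so some threshold attains $\mathbb E c=\rho$ exactly. This gives both feasibility and complementary slackness. Finally,
\[
R^\star_{\mathrm{rel}}=T\Bigl(\mathbb E[r(b^\star)]-\lambda^\star\,\mathbb E[c(b^\star)]+\lambda^\star\rho\Bigr)=T\,\mathbb E[r(b^\star)]
\]
by slackness.

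\textbf{Main obstacle.} The hard step is this tie-breaking selection in the interior case. If maximizers are unique almost everywhere, which is plausible since $G$ is strictly increasing, the two endpoints coincide and no selection is needed. Otherwise the threshold construction above is what I would try first, with a randomized selection as the fallback.
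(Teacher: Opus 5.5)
Your proposal is correct and follows essentially the same route as the paper: Lagrangian weak duality giving $R(\pi^\star)\le T D(\lambda)$ for every $\lambda\ge 0$, convexity of $D$, exclusion of the endpoint $\bar\lambda$, and the subgradient optimality condition at $\lambda^\star$ followed by complementary slackness. The paper excludes $\bar\lambda$ by the value comparison $D(\bar\lambda)\ge\bar\lambda\rho>\bar v\ge D(0)$ rather than through $D'_-(\bar\lambda)>0$. Your threshold/intermediate-value selection in the interior case supplies a step the paper only asserts: the existence of a maximizer with $\rho-\mathbb E[c]\in\partial D(\lambda^\star)$ does not by itself give $\mathbb E[c]=\rho$. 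As you note, this selection step needs an atomless value/context distribution, or else a randomized tie-breaking rule.
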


\begin{lemma}[Stopping-time bound]
\label{lem:stopping-time-bound}
Let \(\tau\) be the stopping time of Algorithm~\ref{alg:budget-bidding}. Then
\[
\mathbb E[T-\tau]\le C\sqrt T\log T,
\]
where \(C>0\) is a universal constant.
The proof is given in Appendix~\ref{app:stopping-time-bound}.
\end{lemma}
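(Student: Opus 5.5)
The plan is the standard ``dual-variable accounting'' argument. The multiplier $\lambda_t$ grows whenever the estimated spending exceeds the target rate $\rho$. Its boundedness (Lemma~\ref{lem:dual-bound}) therefore caps how far cumulative estimated spending can outrun $\rho t$. We then transfer this cap from estimated to realized spending and compare with the stopping rule $B_{\tau+1}<\bar v$.

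\textbf{Step 1 (good event).} Fix $\delta=1/T$ and work on the intersection $\mathcal E$ of three events:
\begin{itemize}
\item the estimation event $\mathcal E_\alpha$;
\item the uniform confidence event for $\hat c_i$ over all active value--bid pairs in all phases;
\item an Azuma event for the realized-cost martingale.
\end{itemize}
By a union bound over the $O(\log T)$ phases, $\mathbb P(\mathcal E^c)=O(\delta\log T)$. Since $T-\tau\le T$, the complement contributes at most $O(\log T)$ to $\mathbb E[T-\tau]$.

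\textbf{Step 2 (dual telescoping).} From $\lambda_{t+1}=\max\{0,\lambda_t-\eta(\rho-\hat c_t)\}\ge\lambda_t+\eta(\hat c_t-\rho)$ and $\lambda_{|T_0|}=0$, summing over the commit rounds up to $\tau$ gives
\[
\sum_{t\le\tau,\,t\notin T_0}(\hat c_t-\rho)\le \frac{\lambda_{\tau+1}}{\eta}\le \bar\lambda\sqrt T,
\]
using Lemma~\ref{lem:dual-bound} and $\eta=T^{-1/2}$. The exploration rounds in $T_0$ bid $0$ and spend nothing.

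\textbf{Step 3 (from $\hat c_t$ to realized spending).} Write
\[
b_t\I\{b_t>d_t\}-\hat c_t=\bigl(b_t\I\{b_t>d_t\}-c(b_t,x_t)\bigr)+\bigl(c(b_t,x_t)-\hat c_t\bigr).
\]
The first bracket is a bounded martingale difference with respect to the history, because $b_t$ is predictable and $d_t$ is fresh. Azuma--Hoeffding (with a maximal inequality to handle the random horizon $\tau$) bounds its sum by $O(\bar v\sqrt{T\log T})$.

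The second bracket has three sources of error:
\begin{itemize}
\item the discretization error from using $v^{m(t)}$ and $x^{m(t)}$, which is $O(\bar v/\sqrt T)$ per round by Lipschitzness of $G$ (Assumption~\ref{ass:lipschitz});
\item the plug-in error $C\bar x|\hat\alpha_{i-1}-\alpha|$;
\item the confidence width $Cw_{i-1}^{m(t)}$.
\end{itemize}
Summed over the doubling phases, the plug-in error gives $\sum_i \ell_i\beta_{i-1}=O(\sqrt T\log T)$ by Theorem~\ref{thm:main-estimation}. The first phase uses $\beta_0=\widetilde O(T^{-1/4})$ on a block of length $O(\sqrt T)$, so its contribution is $O(T^{1/4})$. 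The confidence widths sum to $O(\sqrt T\log T)$ by the usual $\sum_s 1/\sqrt{N}$ argument, since each phase length doubles and $N_i^m$ counts samples in $B_i$.

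\textbf{Step 4 (conclusion).} If $\tau<T$, the stopping rule gives
\[
\sum_{t\le\tau}b_t\I\{b_t>d_t\}>B-\bar v=\rho T-\bar v.
\]
Combining Steps 2 and 3,
\[
\rho T-\bar v\le\rho\tau+\bar\lambda\sqrt T+O(\sqrt T\log T),
\]
hence $T-\tau\le C\sqrt T\log T/\rho$ on $\mathcal E$. Adding the contribution of $\mathcal E^c$ yields the claim, with the problem constants $\rho,\bar v,\bar\lambda,\bar x$ absorbed into $C$.

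\textbf{Main obstacle.} I expect Step 3 to be the hardest. Bounding $\sum_t(c(b_t,x_t)-\hat c_t)$ requires the uniform confidence bounds for $\hat c_{i-1}$ at the \emph{selected} active bid, and these are built from one-sided, adaptively censored feedback. The estimator evaluates cost at the representative context $x^{m}$ and uses $\hat\alpha$ from the previous phase, so the shift error and the bin-discretization error must be controlled jointly. Concretely, I would prove a per-pair bound $|\hat c_{i-1}-c|\le Cw_{i-1}^{m}+C\bar x\beta_{i-1}+C\bar v/\sqrt T$. To do so, I would apply Freedman's inequality to the product-indicator estimator, using the observation that it is implementable from one-sided feedback. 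The bound then follows by a union bound over the $O(T)$ grid pairs and $O(\log T)$ phases.
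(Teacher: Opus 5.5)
Your skeleton matches the paper's proof: dual telescoping $\sum_{t\le\tau,\,t\notin T_0}(\hat c_t-\rho)\le\bar\lambda/\eta$, transfer from $\hat c_t$ to true expected cost via the confidence bounds, Azuma--Hoeffding for realized minus expected spend, and the stopping rule $B_{\tau+1}<\bar v$.

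\textbf{Gap: summing the confidence widths.} You assert $\sum_t w_{i-1}^{m(t)}=O(\sqrt T\log T)$ ``by the usual $\sum 1/\sqrt N$ argument, since each phase length doubles and $N_i^m$ counts samples in $B_i$.'' That justification does not work. $N_i^m=\min_{\tilde b^k\in\mathcal B_{i-1}^m}n_i^k$ counts only those $s\in B_i$ whose shifted bid $\tilde b_s$ lies at or below the smallest active shifted bid of bin $m$. It is not of order $|B_i|$, and nothing in your argument lower-bounds it. Even if $N_i^m$ were the number of visits to bin $m$, pigeonholing over $M=\lceil\sqrt T\rceil$ value bins would give only $\sqrt{MT}\approx T^{3/4}$.

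The missing idea is the cross-bin structure that the paper packages as Lemma~\ref{lem:confidence-sum-bound}. The monotone filtering rule, justified by Lemma~\ref{lem:residual-monotonicity}, makes the selected shifted bids nondecreasing in value (Lemma~\ref{lem:selected-shifted-bid-monotonicity}). So every historical sample with a smaller value is counted in round $t$'s effective sample size. This gives $N_i(t)\ge R_{i,t}-1$, where $R_{i,t}$ is the rank of $v_t$ among the historical values (Lemma~\ref{lem:rank-lower-bound}). Since $v_t$ is a fresh draw, its rank is essentially uniform, so $\sum_t R_{i,t}^{-1/2}$ is $O(\sqrt{|A_i\cup B_i|})$ per phase in expectation (Lemma~\ref{lem:rank-counting-bound}). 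In Step 3, invoke that lemma (or reproduce the rank argument) instead of the generic count.

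\textbf{Smaller point: the discretization claim.} Your claim that using $x^{m(t)}$ costs only $O(\bar v/\sqrt T)$ per round by Lipschitzness of $G$ is false as stated. The grid point $v^{m(t)}$ discretizes the shaded value $v_t/(1+\lambda_t)$, not $v_t$. So $x^{m(t)}=f^{-1}(v^{m(t)})$ can be $\Theta(1)$ away from $x_t$ whenever $\lambda_t$ is bounded away from zero. Then $G(b_t-\alpha x_t)$ can differ by $\Theta(1)$ from what $\hat c_t$ estimates. (The paper's own Step 2 asserts the same two-sided bound.)

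For this lemma you only need an upper bound on the true cost, and that does survive. With $\alpha\ge0$ (which the paper uses elsewhere), Assumption~\ref{ass:Superlinear} makes $f$ increasing. Hence $x^{m(t)}\le x_t$ and $b_t-\alpha x_t\le\tilde b_t+(\hat\alpha_{i-1}-\alpha)x^{m(t)}$. Therefore $c(b_t,x_t)\le b_t\,G\bigl(\tilde b_t+(\hat\alpha_{i-1}-\alpha)x^{m(t)}\bigr)$. This is exactly the plug-in target that $\hat c_t$ tracks up to $Cw_{i-1}^{m(t)}+C\bar x\beta_{i-1}$, so the discretization term can simply be dropped from the spending bound.
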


\begin{lemma}[Confidence representation]
\label{lem:confidence-representation}
Fix a phase \(i\). On the common good event, for every value bin \(m\), every
active shifted bid \(\tilde b^k\in\mathcal B_{i}^{m}\), and every evaluation
context \(x\in[0,\bar x]\), the shifted-coordinate estimators satisfy
\[
\left|
\widehat r_i(v^m,\tilde b^k;x)
-
r_\alpha(v^m,\tilde b^k;x)
\right|
\le
Cw_i^m+C\bar x|\widehat\alpha_i-\alpha|,
\]
and
\[
\left|
\widehat c_i(v^m,\tilde b^k;x)
-
c_\alpha(v^m,\tilde b^k;x)
\right|
\le
Cw_i^m+C\bar x|\widehat\alpha_i-\alpha|,
\]
where
\[
r_\alpha(v,\tilde b;x)
=
\bigl(v-(\tilde b+\alpha x)\bigr)G(\tilde b),
\qquad
c_\alpha(\tilde b;x)
=
(\tilde b+\alpha x)G(\tilde b),
\]
and
\[
w_i^m
=
\sqrt{\frac{\log(T/\delta)}{N_i^m\vee 1}} .
\]

Furthermore, for every round \(t\in A_i\cup B_i\), the bid selected by
Algorithm~\ref{alg:budget-bidding} satisfies the one-step Lagrangian comparison
\[
r(v_t,b_t)-\lambda_t c(v_t,b_t)
\ge
r(v_t,b)-\lambda_t c(v_t,b)
-
C(1+\bar\lambda)
\left(
w_{i-1}^{m(t)}
+
\bar x\beta_{i-1}
+
\frac{\bar v}{\sqrt T}
\right)
\]
for every comparator bid \(b\in[0,v_t]\). In particular, taking
\(b=b_t^\star=b^\star(v_t,x_t)\) gives
\[
r(v_t,b_t^\star)-r(v_t,b_t)
\le
C(1+\bar\lambda)w_{i-1}^{m(t)}
+
C(1+\bar\lambda)\bar x\beta_{i-1}
+
C\frac{(1+\bar\lambda)\bar v}{\sqrt T}
-
\lambda_t
\bigl(c(v_t,b_t)-c(v_t,b_t^\star)\bigr).
\]
The proof is given in Appendix~\ref{app:confidence-representation}.
\end{lemma}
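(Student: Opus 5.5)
The statement to be proved is the last lemma above, Lemma~\ref{lem:confidence-representation}. I would split it into two parts: a statistical confidence bound for $\widehat r_i,\widehat c_i$, and then a deterministic one-step comparison built from the elimination rule and the Lagrangian bid choice.

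\textbf{Confidence bound.} Fix a phase $i$, a bin $m$ and an active $\tilde b^k$. I would introduce the oracle version of the estimators, in which $\widehat\alpha_i$ is replaced by $\alpha$ in the shifted coordinates. For $s\in B_i$ we have $\tilde d_s=d_s-\alpha x_s=z_s$, which is independent of the past and of $\tilde b_s$. Hence, conditionally on $\{\tilde b^k\ge\tilde b_s\}$, the indicator $\I\{\tilde b^k\ge z_s\}$ has mean $G(\tilde b^k)$. The centered sum over the $n_i^k$ rounds with $\tilde b^k\ge\tilde b_s$ is therefore a martingale difference sequence with bounded increments. Freedman or Azuma applied with a random number of terms, plus a union bound over $k\le K$, $m\le M$ and phases $i\le\log T$, gives deviation at most $C\sqrt{\log(T/\delta)/n_i^k}\le Cw_i^m$, since $n_i^k\ge N_i^m$ for active $k$.

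The plug-in error comes from replacing $\alpha$ by $\widehat\alpha_i$ in two places. In the multiplier $\tilde b^k+\widehat\alpha_i x$ it costs at most $\bar x|\widehat\alpha_i-\alpha|$. In the indicator, $d_s-\widehat\alpha_i x_s=z_s+(\alpha-\widehat\alpha_i)x_s$. Sandwiching $\I\{z_s\le \tilde b^k\pm\bar x|\widehat\alpha_i-\alpha|\}$ and using that $G$ is $g_{\max}$-Lipschitz gives an extra $g_{\max}\bar x|\widehat\alpha_i-\alpha|$ in expectation. The sandwich bounds concentrate at the same martingale rate.

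One subtlety is that the thresholds $\tilde b_s=b_s-\widehat\alpha_i x_s$ are defined with $\widehat\alpha_i$, while the bids were placed with $\widehat\alpha_{i-1}$. This affects only which rounds enter $n_i^k$; it is predictable and does not break the martingale argument. A further union over a $1/T$-grid of $\widehat\alpha$ values in $\mathcal A_0$ handles the dependence between $\widehat\alpha_i$ and $B_i$. The cost estimator is treated identically.

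\textbf{One-step comparison.} I would reduce the comparison for arbitrary $b\in[0,v_t]$ to grid points in three moves.
\begin{enumerate}
\item Rounding $v_t/(1+\lambda_t)$ down to $v^{m(t)}$ and $b$ to the bid grid changes the Lagrangian objective $r-\lambda c=(v-(1+\lambda)b)G(b-\alpha x)$ by at most $C(1+\bar\lambda)\bar v/\sqrt T$. This uses Lipschitzness of $G$ and $\Delta_v=\Delta_b=\bar v/\sqrt T$.
\item The Lagrangian objective equals $(1+\lambda_t)\,\tilde r(v_t/(1+\lambda_t),\cdot)$ in shifted coordinates. Its maximizer is therefore the oracle shifted bid of the rescaled value, which is exactly what the algorithm targets through $v^{m(t)}$.
\item By the confidence bound, the optimal grid bid $\tilde b^\star_{m}$ is never eliminated. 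It survives the $2w$-rule because its estimate is within $2Cw+2C\bar x\beta$ of the empirical best. It survives monotone filtering by Lemma~\ref{lem:residual-monotonicity}: lower bins' optima are at most $\tilde b^\star_m$, and by induction they remain in their own active sets.
\end{enumerate}
Every surviving bid, in particular the smallest one chosen by the algorithm, has true reward within $4Cw_{i-1}^{m}+C\bar x\beta_{i-1}$ of $\tilde b^\star_m$. Multiplying by $(1+\lambda_t)\le 1+\bar\lambda$ (Lemma~\ref{lem:dual-bound}) gives the stated inequality. The final displayed bound follows by rearranging with $b=b_t^\star$.

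\textbf{Main obstacle.} I expect the hardest step to be matching the algorithm's choice, which is the \emph{smallest} active bid rather than the empirical maximizer, with the Lagrangian comparison. The elimination is driven by the reward $\widehat r$ at $v^{m}$, not by $r-\lambda c$. My first attempt would be to use the identity $r-\lambda c=(1+\lambda)\tilde r(v/(1+\lambda),\cdot)$ so that reward-based elimination at the rescaled bin is exactly Lagrangian elimination. I would then check carefully that the $\widehat\alpha_{i-1}$ shift in the submitted bid contributes only $\bar x\beta_{i-1}$.
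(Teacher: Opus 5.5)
Your confidence-bound part is fine and is the paper's argument in substance. You condition on the predictable selection $\I\{\tilde b^k\ge\tilde b_s\}$, note that $\I\{\tilde b^k\ge\tilde d_s\}=\I\{z_s\le\tilde b^k+(\widehat\alpha_i-\alpha)x_s\}$ has conditional mean $G(\tilde b^k+(\widehat\alpha_i-\alpha)x_s)$, apply Freedman with a union bound, and pay $C\bar x|\widehat\alpha_i-\alpha|$ for the plug-in through the Lipschitzness of $G$. Your sandwich and grid union over $\widehat\alpha$ are unnecessary, because $\widehat\alpha_i$ is computed from $A_i$ and is fixed before $B_i$ starts.

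The gap is in the one-step comparison, exactly at the identity you propose for your main obstacle. The dual rescaling changes the value but not the context:
\[
r(v_t,b)-\lambda_t c(v_t,b)=(1+\lambda_t)\bigl(\widetilde v_t-b\bigr)G(b-\alpha x_t),\qquad \widetilde v_t=\frac{v_t}{1+\lambda_t},\quad x_t=f^{-1}(v_t).
\]
So in the coordinate $u=b-\alpha x_t$ the objective is $(1+\lambda_t)(A_t-u)G(u)$ with level $A_t=\widetilde v_t-\alpha x_t$. By contrast, $\tilde r(\widetilde v_t,\cdot)$, and hence the active set of bin $m(t)$, is organized around the level $v^{m(t)}-\alpha x^{m(t)}$ with $x^{m(t)}=f^{-1}(v^{m(t)})\approx f^{-1}(\widetilde v_t)$. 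The two levels differ by about $\alpha(x_t-x^{m(t)})$. This stays bounded away from zero whenever $\alpha\neq0$ and $\lambda_t,v_t$ are bounded away from zero, instead of being $O(\bar v/\sqrt T)$. The same term defeats the check you planned on the submitted bid, whose true shift is
\[
b_t-\alpha x_t=\tilde b_t+(\widehat\alpha_{i-1}-\alpha)x^{m(t)}-\alpha\bigl(x_t-x^{m(t)}\bigr),
\]
and only the middle term is $O(\bar x\beta_{i-1})$. The oracle shifted bid moves with the level but not one-for-one, so these discrepancies do not cancel.

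For instance, take $f(x)=x$, $\alpha=1/2$, $z\sim\mathcal U[0,0.6]$ (all standing assumptions hold), $v_t=1$ and $\lambda_t=1/2$. The Lagrangian maximizer has $u^\star=1/12$ and value about $0.017$. Bin $v^{m(t)}\approx 2/3$ instead targets $\tilde b=1/6$, and the smallest active bid is no larger. Hence $b_t\approx 1/2$, $u_t\approx 0$ or below, and the bid wins with probability essentially zero. This is a constant loss, not $O(w+\bar x\beta+\bar v/\sqrt T)$.

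The paper's proof makes the same identification: it writes $r(v_t,b)-\lambda_t c(v_t,b)=(1+\lambda_t)r(\widetilde v_t,b)$ and shifts the comparator at $x^{m(t)}$ rather than $x_t$. So you are following its route, but the step fails there too. With $x^{m(t)}=f^{-1}(v^{m(t)})$ as in Algorithm~\ref{alg:budget-bidding}, the inequality is not available. A valid argument needs the bin and the bid shift computed from the pair $(\widetilde v_t,x_t)$. For example, index active sets by the estimated level $\widetilde v_t-\widehat\alpha_{i-1}x_t$ and bid $\tilde b_t+\widehat\alpha_{i-1}x_t$; the proof of Lemma~\ref{lem:residual-monotonicity} then applies verbatim to $(A-\tilde b)G(\tilde b)$.

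Separately, your survival claim for $\tilde b^\star_m$ does not follow from what you state. Being within $2Cw+2C\bar x\beta$ of the empirical best does not guarantee survival under a rule whose threshold is $2w_i^m$, with constant one and no $\beta$-term. You must either enlarge the threshold to $2(Cw_i^m+C\bar x\beta_i)$ or show that the total estimation error is at most $w_i^m$. Otherwise the induction that keeps $\tilde b^\star_m$ active breaks, and your monotone-filter step also relies on it.
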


\begin{lemma}[Confidence-sum bound]
\label{lem:confidence-sum-bound}
The confidence radii satisfy
\[
\mathbb E\!\left[
\sum_{i=1}^n
\sum_{t\in A_i\cup B_i}
w_{i-1}^{m(t)}
\right]
\le
C\sqrt T\log T.
\]
The proof is given in Appendix~\ref{app:lem:confidence-sum-bound}.
\end{lemma}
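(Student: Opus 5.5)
The statement to prove is the last one listed, Lemma~\ref{lem:confidence-sum-bound}: the confidence radii $w_{i-1}^{m(t)}$, summed over all commit rounds, total $O(\sqrt T\log T)$ in expectation.

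The plan is to bound the sum phase by phase, using the doubling structure of the phases and a lower bound on the effective sample counts $N_{i-1}^m$.

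\textbf{Step 1 (crude reduction).} Since $w_{i-1}^m=\sqrt{\log(T/\delta)/(N_{i-1}^m\vee1)}\le\sqrt{\log(T/\delta)}$, phase $1$ uses $w_0^m$. That phase has length $2h=O(\sqrt T)$, so its contribution is $O(\sqrt T\sqrt{\log T})$. For $i\ge2$ it therefore suffices to show that, for every bin $m$ visited in phase $i$, $N_{i-1}^m$ is at least a constant multiple of the number of phase-$(i-1)$ update rounds that fall into bin $m$. The count $N_{i-1}^m$ is the minimum of $n_{i-1}^k$ over active bids $\tilde b^k\in\mathcal B_{i-2}^m$.

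\textbf{Step 2 (key counting step).} Here I use the smallest-active-bid rule together with monotone filtering. Every round $s\in B_{i-1}$ submits the \emph{smallest} feasible active shifted bid $\tilde b_s$ of its bin. For any active bid $\tilde b^k$ of bin $m$, the indicator $\I\{\tilde b^k\ge\tilde b_s\}$ is therefore $1$ for every round $s$ in bin $m$. By the monotone filtering rule $\tilde b^k\ge\max_{s'<m}\inf\mathcal B^{s'}$, the same holds for every round in a lower bin $m'\le m$. Hence
\[
N_{i-1}^m\ \ge\ \#\{s\in B_{i-1}: m(s)\le m\}\ \ge\ \#\{s\in B_{i-1}:m(s)=m\}=:L_{i-1}^m .
\]
The main obstacle is the discrepancy between the shift $\hat\alpha_{i-2}$ used when bidding and the shift $\hat\alpha_{i-1}$ used in the estimator. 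A further complication is the feasibility truncation $0\le\tilde b+\hat\alpha x\le v$. I would handle the first by noting that on $\mathcal E_\alpha$ the two shifts differ by at most $\bar x(\beta_{i-2}+\beta_{i-1})=O(\bar x\ell_{i-1}^{-1/2})$. I would then either enlarge the comparison threshold by one grid step $\Delta_b$, absorbing this into the discretization error, or note that the mismatch affects only $O(1)$ grid points. If this fails, I would fall back on the cruder bound $N_{i-1}^m\ge$ the number of rounds with $m(s)\le m$ only after rounding.

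\textbf{Step 3 (summation).} Bin visits are driven by $v_t/(1+\lambda_t)$. On $\mathcal E_\alpha$ the dual variable satisfies $0\le\lambda_t\le\bar\lambda$ (Lemma~\ref{lem:dual-bound}), but I will avoid distributional arguments over bins and instead use Cauchy--Schwarz with a pairing between consecutive phases. Let $L_i^m$ denote the number of rounds of phase $i$ in bin $m$. I need a comparability condition $\mathbb E[L_i^m]\lesssim\mathbb E[L_{i-1}^m]+\sqrt{\ell_i\log T}$; this holds because contexts are i.i.d. and, with $\eta=T^{-1/2}$, $\lambda_t$ moves by $O(\ell_i/\sqrt T)$ within a phase, while each bin has width $\Delta_v=\bar v/\sqrt T$. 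I am least sure of this step. If it is too delicate, the alternative is to use $N_{i-1}^m\ge\#\{s:m(s)\le m\}$ and sum over $m$ with a harmonic-type bound, $\sum_m L^m/\sqrt{\sum_{m'\le m}L^{m'}}\le 2\sqrt{\ell}$. This alternative gives the per-phase bound $\sqrt{\log(T/\delta)}\cdot O(\sqrt{\ell_i})$ without needing any comparability. Either way, I get
\[
\sum_{t\in A_i\cup B_i}w_{i-1}^{m(t)}=O\Bigl(\sqrt{\ell_i\log(T/\delta)}\Bigr).
\]
Summing over the doubling phases gives $\sum_i\sqrt{\ell_i}=O(\sqrt T)$. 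Setting $\delta=1/T$ and bounding the complement of the good event by $T\sqrt{\log T}\cdot\delta$ then yields $C\sqrt T\log T$.
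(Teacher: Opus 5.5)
Step 2 is essentially the paper's rank lower bound (Lemma~\ref{lem:rank-lower-bound}), written with bin indices instead of raw values. Step 3, however, does not close. The rounds that pay $w_{i-1}^{m(t)}$ belong to phase $i$, while your lower bound on $N_{i-1}^{m}$ counts rounds of $B_{i-1}$. So the quantity you must control is
\[
\sum_m \frac{L_i^m}{\sqrt{\bigl(\sum_{m'\le m}L_{i-1}^{m'}\bigr)\vee 1}},
\]
with numerator and denominator taken from two different samples. The inequality $\sum_m L^m/\sqrt{\sum_{m'\le m}L^{m'}}\le 2\sqrt{\ell}$ requires the \emph{same} counts in both places, so your fallback is not comparability-free. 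For example, if the $B_{i-1}$ rounds fall in high bins and the phase-$i$ rounds fall in low bins, your lower bound on $N_{i-1}^{m(t)}$ is zero for every phase-$i$ round, and the estimate becomes of order $\ell_i\sqrt{\log T}$.

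Your primary route simply assumes this comparability, and your justification does not support it. Since $\ell_i\ge h\approx\sqrt T$, the bound $O(\ell_i/\sqrt T)$ on the drift of $\lambda_t$ is never small: $\lambda_t$ can move by order one, up to $\bar\lambda$. That can shift $v_t/(1+\lambda_t)$ across a constant fraction of the $M\approx\sqrt T$ bins of width $\bar v/\sqrt T$. In addition, a per-bin comparison of expected counts would not control the random denominators $N_{i-1}^m$.

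The missing idea is the one in Lemma~\ref{lem:rank-counting-bound}: keep the lower bound cumulative, and index it by a quantity that is exchangeable between a fresh round and the history. The paper shows $N_i(t)\ge R_{i,t}-1$ with the raw-value rank $R_{i,t}=1+\sum_{s\in\mathcal H_i}\I\{v_s\le v_t\}$. Because $v_t$ is a fresh draw from the same distribution as the historical values, $R_{i,t}$ is uniform on $\{1,\dots,H_i+1\}$ (up to ties). Hence $\mathbb E[R_{i,t}^{-1/2}\mid\mathcal H_i]\le 2/\sqrt{H_i+1}$, which gives $O(\sqrt{\ell_i})$ per phase with no bin-by-bin comparison at all.

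Your bin-indexed count lacks this exchangeability when $\lambda_t\neq\lambda_s$. To use the rank argument you need the selected shifted bids to be monotone in the raw value $v$, which is Lemma~\ref{lem:selected-shifted-bid-monotonicity}. The paper's proof of that lemma passes from the bin ordering to the value ordering without addressing $\lambda$. So the $\lambda$-dependence you flagged is exactly where either argument needs additional care.
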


\begin{lemma}[Dual-term bound]
\label{lem:c-bound}
Let \(b_t^\star=b^\star(v_t,x_t)\) be the relaxed stationary benchmark selected
in Lemma~\ref{lem:relaxed-stationary-benchmark}. Under the dual update
\[
\lambda_{t+1}
=
\max\left\{
0,\lambda_t-\eta(\rho-\widehat c_t)
\right\},
\]
we have
\[
\mathbb E\!\left[
\sum_{t=1}^T
\lambda_t
\bigl(\widehat c_t-c(v_t,b_t^\star)\bigr)
\right]
\ge
-C\eta T\bar v^2.
\]
Equivalently,
\[
\mathbb E\!\left[
\sum_{t=1}^T
\lambda_t
\bigl(c(v_t,b_t^\star)-\widehat c_t\bigr)
\right]
\le
C\eta T\bar v^2.
\]
In particular, choosing \(\eta=T^{-1/2}\) gives
\[
\mathbb E\!\left[
\sum_{t=1}^T
\lambda_t
\bigl(c(v_t,b_t^\star)-\widehat c_t\bigr)
\right]
\le
C\bar v^2\sqrt T.
\]
The proof is given in Appendix~\ref{app:c-bound}.
\end{lemma}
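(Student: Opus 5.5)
The statement to prove is Lemma~\ref{lem:c-bound}. I would treat it as an online-gradient-descent regret bound for the dual variable, with the fixed comparator $\lambda=0$. The update $\lambda_{t+1}=\max\{0,\lambda_t-\eta(\rho-\widehat c_t)\}$ is projected gradient descent on the linear losses $\ell_t(\lambda)=\lambda(\rho-\widehat c_t)$ over $[0,\infty)$.

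The first step is the standard one-step expansion. By nonexpansiveness of the projection onto $[0,\infty)$,
\[
\lambda_{t+1}^2\le(\lambda_t-\eta(\rho-\widehat c_t))^2=\lambda_t^2-2\eta\lambda_t(\rho-\widehat c_t)+\eta^2(\rho-\widehat c_t)^2 .
\]
Rearranging gives
\[
\lambda_t(\widehat c_t-\rho)\ge\frac{\lambda_{t+1}^2-\lambda_t^2}{2\eta}-\frac{\eta}{2}(\rho-\widehat c_t)^2 .
\]
I would sum this over the active rounds. The sum telescopes to $(\lambda_{\tau}^2-\lambda_{|T_0|}^2)/(2\eta)\ge0$, because the algorithm initializes $\lambda_{|T_0|}=0$; during $T_0$ the multiplier is zero, so those rounds contribute nothing. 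Since $\widehat c_t\in[0,\bar v]$ (bids are at most $\bar v$) and $\rho\le\bar v$, the quadratic terms contribute at least $-\tfrac{\eta}{2}T\bar v^2$. This yields
\[
\sum_t\lambda_t(\widehat c_t-\rho)\ge -C\eta T\bar v^2
\]
pathwise.

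The second step replaces $\rho$ by $c(v_t,b_t^\star)$. I would write
\[
\lambda_t\bigl(\widehat c_t-c(v_t,b_t^\star)\bigr)=\lambda_t(\widehat c_t-\rho)+\lambda_t\bigl(\rho-c(v_t,b_t^\star)\bigr).
\]
The multiplier $\lambda_t$ is determined by the history $\mathcal F_{t-1}$, while $(v_t,x_t)$ is a fresh i.i.d.\ draw. Hence
\[
\mathbb E\bigl[\lambda_t(\rho-c(v_t,b_t^\star))\bigr]=\mathbb E[\lambda_t]\bigl(\rho-\mathbb E[c(v_t,b^\star(v_t,x_t))]\bigr)\ge0,
\]
using $\mathbb E[c(v_t,b^\star)]\le\rho$ from Lemma~\ref{lem:relaxed-stationary-benchmark}. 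Taking expectations of the pathwise bound and adding this nonnegative term gives the claim. Substituting $\eta=T^{-1/2}$ then gives the $C\bar v^2\sqrt T$ form.

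The main obstacle is the stopping time. The sum must be interpreted over rounds $t\le\tau$, and $\{t\le\tau\}$ is $\mathcal F_{t-1}$-measurable: termination is decided from $B_{t}$, which depends only on past rounds. So I would write each summand with the factor $\I\{t\le\tau\}\lambda_t$ and apply the conditional-expectation argument with this predictable weight; the telescoping argument is unaffected.

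One further subtlety is that $b_t^\star$ uses the true $\alpha$, whereas $\widehat c_t$ uses estimates. This does not matter here, because the two pieces are handled separately: the OGD piece only uses the bound $0\le\widehat c_t\le\bar v$, and the benchmark piece only uses the benchmark's feasibility.
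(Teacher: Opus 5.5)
Your proposal is correct and matches the paper's proof. Both arguments run the same steps: nonexpansiveness of the projection with comparator $\lambda=0$, telescoping from the zero initialization, and bounding $(\rho-\widehat c_t)^2$ by $O(\bar v^2)$. Both then add $\lambda_t(\rho-c(v_t,b_t^\star))$, which is nonnegative in expectation because $\lambda_t$ is $\mathcal F_{t-1}$-measurable, $(v_t,x_t)$ is fresh, and $\mathbb E[c(v_t,b^\star)]\le\rho$ by Lemma~\ref{lem:relaxed-stationary-benchmark}. Your extra handling of the stopping time via the predictable indicator $\I\{t\le\tau\}$, and of the exploration block, covers details the paper's proof passes over.
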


\subsection{Proof of Theorem~\ref{thm:main-regret}}
\label{app:proof-main-regret}

In this section, we prove Theorem~\ref{thm:main-regret}. We restate it below
for convenience.

\medskip
\noindent \textbf{Theorem~\ref{thm:main-regret}} {\it
For the contextual first-price auction with budget constraints under one-sided
feedback, Algorithm~\ref{alg:budget-bidding} satisfies
\[
\mathbb{E}[\mathrm{Regret}(\pi)]
\le
O\bigl(\sqrt T\log T(\bar v+\bar\lambda+\bar x(1+\bar\lambda))\bigr)
\]
under Assumptions~\ref{ass:Superlinear},
\ref{ass:lipschitz},
\ref{ass:bounded},
\ref{ass:identifiability},
and~\ref{ass:visible-quantile}.}

\proof{Proof.}
We decompose the proof into three parts corresponding to the three terms in the
regret decomposition.

Throughout the proof, set the failure probability to be
\[
\delta_0=T^{-2}.
\]
We take the failure probability in all high-probability lemmas to be
\(\delta_0\). By a union bound, all required good events hold with probability
at least \(1-O(\delta_0)\). On the complement of these events, the regret is
trivially bounded by \(O(\bar vT)\), so the contribution of the bad events is
\[
O(\bar vT\delta_0)=O(\bar v/T),
\]
which is negligible.

For each phase \(i\ge1\), define
\[
\beta_i
:=
C_\alpha
\sqrt{\frac{\log(n|A_i|/\delta_0)}{|A_i|}},
\]
where \(n\) is the number of phases. Also define
\[
\beta_0
:=
C_0T^{-1/4}\sqrt{\log(T/\delta_0)} .
\]
By Theorem~\ref{thm:main-estimation}, the initial localization bound, and a
union bound over all phases, with probability at least \(1-\delta_0\),
\[
\mathcal E_\alpha
:=
\left\{
|\widehat\alpha_i-\alpha|\le \beta_i,\quad i=0,1,\ldots,n
\right\}
\]
holds. In the remainder of the proof, we work on the common good event and omit
the negligible bad-event contribution.

\vspace{0.5em}
\noindent\textbf{Step 1: Reduction to a stationary benchmark.}
By Lemma~\ref{lem:relaxed-stationary-benchmark}, the optimal hard-budget value
is upper bounded by the relaxed stationary benchmark:
\[
R(\pi^\star)\le R^\star_{\mathrm{rel}}.
\]
Therefore,
\[
\mathrm{Regret}(\pi)
=
R(\pi^\star)-R(\pi)
\le
R^\star_{\mathrm{rel}}-R(\pi).
\]
It remains to show that Algorithm~\ref{alg:budget-bidding} achieves reward
within \(O(\sqrt T\log T)\) of this relaxed benchmark.

Let \(\lambda^\star\) be a minimizer in the definition of
\(R^\star_{\mathrm{rel}}\), and define
\[
b_t^\star
:=
b^\star(v_t,x_t)
\in
\arg\max_{0\le b\le v_t}
\bigl(v_t-(1+\lambda^\star)b\bigr)G(b-\alpha x_t).
\]
By Lemma~\ref{lem:relaxed-stationary-benchmark},
\[
R^\star_{\mathrm{rel}}
=
\mathbb E\!\left[
\sum_{t=1}^T r(v_t,b_t^\star)
\right].
\]

\vspace{0.5em}
\noindent\textbf{Step 2: Regret decomposition.}
Let \(\tau\) denote the stopping time of the algorithm. Then
\begin{align}
\mathbb E[\mathrm{Regret}(\pi)]
&\le
\bar v\,\mathbb E[T-\tau]
+
\mathbb E\!\left[
\sum_{t\in T_0}
\bigl(r(v_t,b_t^\star)-r(v_t,b_t)\bigr)
\right]
\notag\\
&\quad+
\mathbb E\!\left[
\sum_{i=1}^n
\sum_{t\in A_i\cup B_i}
\bigl(r(v_t,b_t^\star)-r(v_t,b_t)\bigr)
\right].
\label{eq:thm2-regret-decomp}
\end{align}
We bound the three terms on the right-hand side separately.

\vspace{0.5em}
\noindent\textbf{Step 3: Budget depletion term.}
By Lemma~\ref{lem:stopping-time-bound},
\[
\mathbb{E}[T-\tau]\le C\sqrt T\log T .
\]
Since the per-round reward is bounded by \(\bar v\) under
Assumption~\ref{ass:bounded}, the contribution of the post-stopping rounds is
bounded by
\[
\bar v\cdot \mathbb{E}[T-\tau]
\le
C\bar v\sqrt T\log T .
\]

\vspace{0.5em}
\noindent\textbf{Step 4: Exploration term.}
The initial exploration block contains
\[
|T_0|=2\lceil\sqrt T\rceil
\]
rounds. Again using the boundedness of the per-round reward, we have
\[
\mathbb{E}\!\left[
\sum_{t\in T_0}
\bigl(r(v_t,b_t^\star)-r(v_t,b_t)\bigr)
\right]
\le
C\bar v\sqrt T.
\]

\vspace{0.5em}
\noindent\textbf{Step 5: Commit-phase regret.}
It remains to control
\[
\mathbb{E}\!\left[
\sum_{i=1}^n
\sum_{t\in A_i\cup B_i}
\bigl(r(v_t,b_t^\star)-r(v_t,b_t)\bigr)
\right].
\]

Fix a round \(t\in A_i\cup B_i\). By Lemma~\ref{lem:dual-bound},
\[
0\le \lambda_t\le \bar\lambda .
\]
For any bid \(b\), define
\[
\widetilde v_t:=\frac{v_t}{1+\lambda_t}.
\]
Then
\[
r(v_t,b)-\lambda_t c(v_t,b)
=
(1+\lambda_t)r(\widetilde v_t,b).
\]
The algorithm chooses
\[
v^{m(t)}
=
\max\left\{u\in V:u\le \frac{v_t}{1+\lambda_t}\right\}.
\]
By the grid resolution and Assumption~\ref{ass:lipschitz}, for all bids \(b\),
\[
\bigl|r(\widetilde v_t,b)-r(v^{m(t)},b)\bigr|
\le
C\frac{\bar v}{\sqrt T}.
\]

Applying Lemma~\ref{lem:confidence-representation} with
\(b=b_t^\star\), we obtain
\[
r(v_t,b_t^\star)-r(v_t,b_t)
\le
C(1+\bar\lambda)w_{i-1}^{m(t)}
+
C(1+\bar\lambda)\bar x\beta_{i-1}
+
C\frac{(1+\bar\lambda)\bar v}{\sqrt T}
-
\lambda_t
\bigl(c(v_t,b_t)-c(v_t,b_t^\star)\bigr).
\]
Summing over all rounds in the commit phases gives
\begin{align}
&\mathbb{E}\!\left[
\sum_{i=1}^n
\sum_{t\in A_i\cup B_i}
\bigl(r(v_t,b_t^\star)-r(v_t,b_t)\bigr)
\right]
\notag\\
&\qquad\le
C(1+\bar\lambda)\bar v\sqrt T
+
C(1+\bar\lambda)
\mathbb{E}\!\left[
\sum_{i=1}^n
\sum_{t\in A_i\cup B_i}
w_{i-1}^{m(t)}
\right]
\notag\\
&\qquad\quad+
C(1+\bar\lambda)\bar x
\mathbb{E}\!\left[
\sum_{i=1}^n
\sum_{t\in A_i\cup B_i}
\beta_{i-1}
\right]
\notag\\
&\qquad\quad-
\mathbb{E}\!\left[
\sum_{t=1}^T
\lambda_t
\bigl(c(v_t,b_t)-c(v_t,b_t^\star)\bigr)
\right].
\label{eq:commit-phase-main-fixed}
\end{align}

\vspace{0.5em}
\noindent\textbf{Step 5.1: Bounding the cumulative confidence term.}
By Lemma~\ref{lem:confidence-sum-bound},
\[
\mathbb{E}\!\left[
\sum_{i=1}^n
\sum_{t\in A_i\cup B_i}
w_{i-1}^{m(t)}
\right]
\le
C\sqrt T\log T.
\]
Therefore,
\[
C(1+\bar\lambda)
\mathbb{E}\!\left[
\sum_{i=1}^n
\sum_{t\in A_i\cup B_i}
w_{i-1}^{m(t)}
\right]
\le
C(1+\bar\lambda)\sqrt T\log T.
\]

\vspace{0.5em}
\noindent\textbf{Step 5.1b: Bounding the cumulative parameter-estimation bias.}
On the good event, for all phases \(i\),
\[
|\widehat\alpha_i-\alpha|\le \beta_i.
\]
Since the intervals \(A_i\) and \(B_i\) have geometrically increasing lengths
and are of the same order,
\[
\sum_{i=1}^n |A_i\cup B_i|\beta_{i-1}
\le
C\sqrt{\log(T/\delta_0)}
\sum_{i=1}^n \sqrt{|A_i\cup B_i|}
\le
C\sqrt T\log T.
\]
Therefore,
\[
C(1+\bar\lambda)\bar x
\mathbb{E}\!\left[
\sum_{i=1}^n
\sum_{t\in A_i\cup B_i}
\beta_{i-1}
\right]
\le
C(1+\bar\lambda)\bar x\sqrt T\log T.
\]

\vspace{0.5em}
\noindent\textbf{Step 5.2: Bounding the dual term.}
It remains to control the dual term with the correct sign. Recall that in
\eqref{eq:commit-phase-main-fixed} this term appears as
\[
-
\sum_{t=1}^T
\lambda_t
\bigl(c(v_t,b_t)-c(v_t,b_t^\star)\bigr).
\]
Therefore, it suffices to lower bound
\[
\sum_{t=1}^T
\lambda_t
\bigl(c(v_t,b_t)-c(v_t,b_t^\star)\bigr).
\]

We decompose the cost as
\[
c(v_t,b_t)
=
\widehat c_t
+
\bigl(c(v_t,b_t)-\widehat c_t\bigr).
\]
Thus,
\begin{align*}
\lambda_t
\bigl(c(v_t,b_t)-c(v_t,b_t^\star)\bigr)
&=
\lambda_t
\bigl(\widehat c_t-c(v_t,b_t^\star)\bigr)
\\
&\quad+
\lambda_t
\bigl(c(v_t,b_t)-\widehat c_t\bigr).
\end{align*}

Summing over \(t=1,\ldots,T\) and taking expectations, by
Lemma~\ref{lem:c-bound}, we have
\[
\mathbb{E}\!\left[
\sum_{t=1}^T
\lambda_t
\bigl(\widehat c_t-c(v_t,b_t^\star)\bigr)
\right]
\ge
-C\eta T\bar v^2.
\]
With \(\eta=T^{-1/2}\), this term is \(O(\bar v^2\sqrt T)\).

For the estimation error term, Lemma~\ref{lem:confidence-representation} and
the good event imply
\[
\bigl|c(v_t,b_t)-\widehat c_t\bigr|
\le
Cw_{i-1}^{m(t)}
+
C\bar x\beta_{i-1}
+
C\frac{\bar v}{\sqrt T}.
\]
Since \(\lambda_t\le \bar\lambda\), applying
Lemma~\ref{lem:confidence-sum-bound} and the cumulative parameter-bias bound
from Step 5.1b gives
\[
\mathbb{E}\!\left[
\sum_{t=1}^T
\lambda_t
\bigl|c(v_t,b_t)-\widehat c_t\bigr|
\right]
\le
C\bar\lambda\sqrt T\log T
+
C\bar\lambda\bar x\sqrt T\log T
+
C\bar\lambda\bar v\sqrt T.
\]
Therefore,
\[
\mathbb{E}\!\left[
\sum_{t=1}^T
\lambda_t
\bigl(c(v_t,b_t)-\widehat c_t\bigr)
\right]
\ge
-
C\bar\lambda\sqrt T\log T
-
C\bar\lambda\bar x\sqrt T\log T
-
C\bar\lambda\bar v\sqrt T.
\]

Combining the two bounds yields
\[
\mathbb{E}\!\left[
\sum_{t=1}^T
\lambda_t
\bigl(c(v_t,b_t)-c(v_t,b_t^\star)\bigr)
\right]
\ge
-
C\bar\lambda\sqrt T\log T
-
C\bar\lambda\bar x\sqrt T\log T
-
C\bar\lambda\bar v\sqrt T
-
C\bar v^2\sqrt T.
\]
Equivalently,
\[
-
\mathbb{E}\!\left[
\sum_{t=1}^T
\lambda_t
\bigl(c(v_t,b_t)-c(v_t,b_t^\star)\bigr)
\right]
\le
C\bar\lambda\sqrt T\log T
+
C\bar\lambda\bar x\sqrt T\log T
+
C\bar\lambda\bar v\sqrt T
+
C\bar v^2\sqrt T.
\]

\vspace{0.5em}
\noindent\textbf{Step 6: Putting things together.}
Combining the stopping-time term, the exploration term, the confidence-sum
bound, the cumulative parameter-estimation bias bound, and the dual lower bound
in Steps 3--5, we obtain
\[
\mathbb{E}[\mathrm{Regret}(\pi)]
\le
C\bar v\sqrt T\log T
+
C(1+\bar\lambda)\sqrt T\log T
+
C(1+\bar\lambda)\bar x\sqrt T\log T
+
C\bar\lambda\bar v\sqrt T
+
C\bar v^2\sqrt T.
\]
Absorbing lower-order terms and problem-dependent constants gives
\[
\mathbb{E}[\mathrm{Regret}(\pi)]
\le
O\bigl(
\sqrt T\log T
(\bar v+\bar\lambda+\bar x(1+\bar\lambda))
\bigr),
\]
which proves the theorem. \hfill \(\square\)

\section{Proofs for Appendix \ref{app:algorithm}}
\subsection{Proof of Lemma~\ref{lem:dual-bound}}
\label{app:dual-bound}

\begin{proof}
Recall that the dual update is
\[
\lambda_{t+1}
=
\max\{0,\lambda_t-\eta(\rho-\widehat c_t)\}
=
\max\{0,\lambda_t+\eta(\widehat c_t-\rho)\},
\]
where, for \(t\in A_i\cup B_i\),
\[
\widehat c_t
=
\widehat c_{i-1}(v^{m(t)},\tilde b_t;x^{m(t)}).
\]

On the good estimation event \(\mathcal E_\alpha\), define
\[
\bar\beta_T:=\max_{0\le j\le n}\beta_j .
\]
Then, for every phase \(i\),
\[
|\widehat\alpha_{i-1}-\alpha|\le \bar\beta_T.
\]
By the value-shading rule and the feasible shifted-bid rule in
Algorithm~\ref{alg:budget-bidding},
\[
0\le b_t
=
\tilde b_t+\widehat\alpha_{i-1}x^{m(t)}
\le
v^{m(t)}
\le
\frac{v_t}{1+\lambda_t}
\le
\frac{\bar v}{1+\lambda_t}.
\]
Moreover, the shifted-coordinate cost estimator is bounded by the submitted
bid up to the shift-estimation error:
\[
0\le \widehat c_t
\le
b_t+\bar x|\widehat\alpha_{i-1}-\alpha|
\le
\frac{\bar v}{1+\lambda_t}
+
\bar x\bar\beta_T .
\]
Let
\[
\epsilon_T:=\bar x\bar\beta_T .
\]
Since \(\bar\beta_T=\widetilde O(T^{-1/4})\), for sufficiently large \(T\),
\[
\epsilon_T\le \frac{\rho}{2}.
\]

Define
\[
\lambda_0:=\frac{2\bar v}{\rho}-1.
\]
If \(\lambda_t\ge\lambda_0\), then
\[
\widehat c_t-\rho
\le
\frac{\bar v}{1+\lambda_t}
+
\epsilon_T
-
\rho
\le
\frac{\rho}{2}
+
\frac{\rho}{2}
-
\rho
=
0.
\]
Hence the update cannot increase \(\lambda_t\) whenever
\(\lambda_t\ge\lambda_0\).

If \(\lambda_t<\lambda_0\), then in one step
\[
\lambda_{t+1}
\le
\lambda_t+\eta\widehat c_t
\le
\lambda_t+\eta(\bar v+\epsilon_T).
\]
Thus \(\lambda_t\) can overshoot \(\lambda_0\) by at most
\(\eta(\bar v+\epsilon_T)\). Since \(\eta=T^{-1/2}\), for sufficiently large
\(T\),
\[
\lambda_t
\le
\lambda_0+\eta(\bar v+\epsilon_T)
\le
\frac{2\bar v}{\rho}+1
=
\bar\lambda .
\]
Nonnegativity follows directly from the projection in the update. Therefore,
\[
0\le \lambda_t\le \bar\lambda,
\qquad t=1,\ldots,T.
\]
This proves the lemma.
\end{proof}

\subsection{Proof of Lemma~\ref{lem:relaxed-stationary-benchmark}}
\label{app:relaxed-stationary-benchmark}

\begin{proof}
Let \(\Pi_{\mathrm{all}}\) denote the class of all admissible policies, and let
\(\pi^\star\) be an optimal policy for the original hard-budget problem.
Since \(x_t=f^{-1}(v_t)\), the contextual variable is uniquely determined by
the current value. Hence, for any bid \(b\), we write
\[
r(v,b)=(v-b)G(b-\alpha x),
\qquad
c(v,b)=bG(b-\alpha x),
\]
where \(x=f^{-1}(v)\).

For any policy \(\pi\in\Pi_{\mathrm{all}}\), define
\[
R(\pi)
:=
\mathbb E^\pi\!\left[\sum_{t=1}^T r(v_t,b_t)\right],
\qquad
C(\pi)
:=
\mathbb E^\pi\!\left[\sum_{t=1}^T c(v_t,b_t)\right].
\]
Since the hard-budget constraint implies the expected-budget constraint, the
optimal hard-budget policy satisfies
\[
C(\pi^\star)\le B=\rho T.
\]

For each \(\lambda\ge0\), define the Lagrangian upper bound
\[
L(\pi,\lambda)
:=
R(\pi)-\lambda(C(\pi)-B).
\]
Since \(C(\pi^\star)\le B\), we have
\[
R(\pi^\star)
\le
L(\pi^\star,\lambda)
\le
\sup_{\pi\in\Pi_{\mathrm{all}}}L(\pi,\lambda),
\qquad
\forall \lambda\ge0.
\]

Expanding the Lagrangian gives
\[
L(\pi,\lambda)
=
\sum_{t=1}^T
\mathbb E^\pi
\left[
r(v_t,b_t)-\lambda c(v_t,b_t)
\right]
+\lambda B.
\]
For any policy \(\pi\in\Pi_{\mathrm{all}}\),
\[
L(\pi,\lambda)
\le
\sum_{t=1}^T
\mathbb E
\left[
\max_{0\le b\le v_t}
\{r(v_t,b)-\lambda c(v_t,b)\}
\right]
+\lambda B.
\]
Using
\[
r(v,b)-\lambda c(v,b)
=
(v-(1+\lambda)b)G(b-\alpha x),
\]
and \(B=\rho T\), we obtain
\[
\sup_{\pi\in\Pi_{\mathrm{all}}}L(\pi,\lambda)
\le
T
\left\{
\mathbb E_v
\left[
\max_{0\le b\le v}
(v-(1+\lambda)b)G(b-\alpha x)
\right]
+\lambda\rho
\right\}.
\]
Moreover, for each fixed \(\lambda\), this upper bound is attained by the
stationary pointwise policy
\[
b_\lambda^\star(v,x)
\in
\arg\max_{0\le b\le v}
(v-(1+\lambda)b)G(b-\alpha x),
\]
which depends only on the current \((v,x)\).

Therefore, for every \(\lambda\in[0,\bar\lambda]\),
\[
R(\pi^\star)
\le
T
\left\{
\mathbb E_v
\left[
\max_{0\le b\le v}
(v-(1+\lambda)b)G(b-\alpha x)
\right]
+\lambda\rho
\right\}.
\]
Taking the minimum over \(\lambda\in[0,\bar\lambda]\) gives
\[
R(\pi^\star)
\le
R^\star_{\mathrm{rel}},
\]
where
\[
R^\star_{\mathrm{rel}}
=
T\cdot
\min_{\lambda\in[0,\bar\lambda]}
\left\{
\mathbb E_v
\left[
\max_{0\le b\le v}
(v-(1+\lambda)b)G(b-\alpha x)
\right]
+\lambda\rho
\right\}.
\]

It remains to show the cost feasibility and complementary slackness of the
selected relaxed stationary optimizer. Define
\[
D(\lambda)
=
\mathbb E
\left[
\max_{0\le b\le v}
\{(v-b)G(b-\alpha x)-\lambda bG(b-\alpha x)\}
\right]
+\lambda\rho .
\]
Equivalently,
\[
D(\lambda)
=
\mathbb E
\left[
\max_{0\le b\le v}
(v-(1+\lambda)b)G(b-\alpha x)
\right]
+\lambda\rho .
\]
The function \(D\) is convex as a pointwise supremum of affine functions of
\(\lambda\).

Let \(\lambda^\star\) be a minimizer of \(D\) over \([0,\bar\lambda]\). The
choice of \(\bar\lambda\) ensures that the upper endpoint is not active.
Indeed, using the action \(b=0\) gives
\[
D(\lambda)\ge \lambda\rho,
\]
while
\[
D(0)\le \bar v .
\]
Since \(\bar\lambda\rho>\bar v\), no minimizer can occur at
\(\bar\lambda\).

Therefore \(\lambda^\star\) satisfies the KKT condition for the minimization
over \(\lambda\ge0\). By the envelope theorem/subgradient optimality condition,
there exists a measurable maximizer \(b^\star(v,x)\) such that
\[
\rho-\mathbb E[c(v_t,b^\star(v_t,x_t))]
\in
\partial D(\lambda^\star).
\]
If \(\lambda^\star>0\), complementary slackness gives
\[
\mathbb E[c(v_t,b^\star(v_t,x_t))]=\rho .
\]
If \(\lambda^\star=0\), the one-sided optimality condition gives
\[
\rho-\mathbb E[c(v_t,b^\star(v_t,x_t))]\ge0,
\]
and hence
\[
\mathbb E[c(v_t,b^\star(v_t,x_t))]\le \rho .
\]
Thus in all cases the selected relaxed stationary optimizer is cost feasible in
expectation:
\[
\mathbb E[c(v_t,b^\star(v_t,x_t))]\le \rho .
\]
Moreover,
\[
\lambda^\star
\left(
\rho-\mathbb E[c(v_t,b^\star(v_t,x_t))]
\right)=0.
\]

Finally, by the definition of \(b^\star\),
\[
D(\lambda^\star)
=
\mathbb E
\left[
r(v_t,b^\star(v_t,x_t))
-
\lambda^\star c(v_t,b^\star(v_t,x_t))
\right]
+
\lambda^\star\rho .
\]
Using complementary slackness, this becomes
\[
D(\lambda^\star)
=
\mathbb E[r(v_t,b^\star(v_t,x_t))].
\]
Therefore,
\[
R^\star_{\mathrm{rel}}
=
T D(\lambda^\star)
=
T\,\mathbb E[r(v_t,b^\star(v_t,x_t))].
\]
This proves the lemma.
\end{proof}

\subsection{Proof of Lemma~\ref{lem:stopping-time-bound}}
\label{app:stopping-time-bound}

\begin{proof}
Let
\[
\tau:=\sup\{t\le T:B_t\ge \bar v\}
\]
be the last round at which the remaining budget is at least \(\bar v\). We prove
a high-probability bound on \(T-\tau\), and then convert it to an expectation
bound.

For each round \(t\), define the realized spend and the true expected cost as
\[
C_t:=b_t\mathbf 1\{b_t>d_t\},
\qquad
c_t:=c(v_t,b_t)=b_tG(b_t-\alpha x_t).
\]
For \(t\in A_i\cup B_i\), let
\[
\widehat c_t
:=
\widehat c_{i-1}(v^{m(t)},\tilde b_t;x^{m(t)})
\]
be the cost estimate used in the dual update.

We work on the common good event on which Lemmas~\ref{lem:dual-bound},
\ref{lem:confidence-representation}, and
\ref{lem:confidence-sum-bound} hold.

\medskip
\noindent
\textbf{Step 1: Cumulative estimated-cost pacing.}

By Lemma~\ref{lem:dual-bound},
\[
0\le \lambda_t\le \bar\lambda,
\qquad t=1,\ldots,T.
\]
The dual update is
\[
\lambda_{t+1}
=
\max\{0,\lambda_t-\eta(\rho-\widehat c_t)\}
=
\max\{0,\lambda_t+\eta(\widehat c_t-\rho)\}.
\]
Since the projection is onto the nonnegative orthant,
\[
\lambda_{t+1}
\ge
\lambda_t+\eta(\widehat c_t-\rho).
\]
Therefore,
\[
\widehat c_t-\rho
\le
\frac{\lambda_{t+1}-\lambda_t}{\eta}.
\]
Summing over all post-exploration rounds up to \(\tau\) gives
\[
\sum_{t\le \tau,\;t\notin T_0}(\widehat c_t-\rho)
\le
\frac{\lambda_{\tau+1}-\lambda_{|T_0|}}{\eta}
\le
\frac{\bar\lambda}{\eta}.
\]
Equivalently,
\[
\sum_{t\le \tau,\;t\notin T_0}\widehat c_t
\le
\rho\tau+\frac{\bar\lambda}{\eta}.
\]

\medskip
\noindent
\textbf{Step 2: From estimated cost to true expected cost.}

By Lemma~\ref{lem:confidence-representation}, for all
\(t\in A_i\cup B_i\),
\[
|c_t-\widehat c_t|
\le
Cw_{i-1}^{m(t)}
+
C\bar x\beta_{i-1}
+
C\frac{\bar v}{\sqrt T}.
\]
Therefore,
\begin{align*}
\sum_{t\le \tau,\;t\notin T_0}c_t
&\le
\sum_{t\le \tau,\;t\notin T_0}\widehat c_t
+
\sum_{i=1}^n\sum_{t\in A_i\cup B_i}
\left(
Cw_{i-1}^{m(t)}
+
C\bar x\beta_{i-1}
+
C\frac{\bar v}{\sqrt T}
\right)
\\
&\le
\rho\tau+\frac{\bar\lambda}{\eta}
+
C\sqrt T\log T .
\end{align*}
The last inequality uses Lemma~\ref{lem:confidence-sum-bound}, the cumulative
parameter-estimation bound
\[
\sum_{i=1}^n |A_i\cup B_i|\beta_{i-1}
\le
C\sqrt T\log T,
\]
and
\[
\sum_{i=1}^n\sum_{t\in A_i\cup B_i}\frac{\bar v}{\sqrt T}
\le
C\bar v\sqrt T .
\]
Since \(b_t=0\) on the initial exploration block \(T_0\), we also have
\(c_t=0\) on \(T_0\). Hence
\[
\sum_{t\le \tau}c_t
\le
\rho\tau+\frac{\bar\lambda}{\eta}
+
C\sqrt T\log T.
\]

\medskip
\noindent
\textbf{Step 3: From true expected cost to realized spend.}

Define the martingale
\[
M_s
:=
\sum_{t=1}^s(C_t-c_t)
=
\sum_{t=1}^s
\left(
b_t\mathbf 1\{b_t>d_t\}
-
b_tG(b_t-\alpha x_t)
\right).
\]
Conditioned on the history before observing \(d_t\), and on the current
\((x_t,v_t,b_t)\),
\[
\mathbb E[C_t-c_t\mid \mathcal F_{t-1},x_t,v_t,b_t]=0.
\]
Moreover, since \(0\le b_t\le \bar v\),
\[
|C_t-c_t|\le \bar v.
\]
Thus \(\{M_s\}_{s=1}^T\) is a martingale with bounded increments. By the
Azuma--Hoeffding inequality and a union bound over \(s\le T\), with probability
at least \(1-\delta\),
\[
\sup_{s\le T}
\left|
\sum_{t=1}^s(C_t-c_t)
\right|
\le
C\bar v\sqrt{T\log(T/\delta)}.
\]
In particular,
\[
\sum_{t\le \tau}C_t
\le
\sum_{t\le \tau}c_t
+
C\bar v\sqrt{T\log(T/\delta)}.
\]
Combining this with Step 2 gives
\[
\sum_{t\le \tau}C_t
\le
\rho\tau
+
\frac{\bar\lambda}{\eta}
+
C\sqrt T\log T
+
C\bar v\sqrt{T\log(T/\delta)}.
\]

\medskip
\noindent
\textbf{Step 4: Use the stopping-time definition.}

If \(\tau<T\), then by definition of \(\tau\),
\[
B_{\tau+1}<\bar v.
\]
Since the initial budget is \(B=\rho T\), this implies
\[
\sum_{t\le \tau}C_t>\rho T-\bar v.
\]
Therefore,
\[
\rho T-\bar v
<
\rho\tau
+
\frac{\bar\lambda}{\eta}
+
C\sqrt T\log T
+
C\bar v\sqrt{T\log(T/\delta)}.
\]
Rearranging gives
\[
\rho(T-\tau)
\le
\bar v
+
\frac{\bar\lambda}{\eta}
+
C\sqrt T\log T
+
C\bar v\sqrt{T\log(T/\delta)}.
\]
With \(\eta=T^{-1/2}\) and \(\delta=T^{-2}\), we obtain
\[
T-\tau
\le
C\sqrt T\log T
\]
with high probability.

Finally, on the failure event, the trivial bound \(T-\tau\le T\) holds. Since
the failure event has probability \(O(T^{-2})\), its contribution to the
expectation is \(O(T^{-1})\). Hence
\[
\mathbb E[T-\tau]
\le
C\sqrt T\log T.
\]
This proves the lemma.
\end{proof}
\subsection{Proof of Lemma~\ref{lem:confidence-representation}}
\label{app:confidence-representation}

\begin{proof}
We prove the cost bound first. The reward bound follows by the same argument.
The one-step Lagrangian comparison is proved at the end.

Fix a phase \(i\), a value bin \(m\), a candidate shifted bid
\(\tilde b^k\in\mathcal B_{i-1}^m\), and an evaluation context
\(x\in[0,\bar x]\). At the beginning of block \(B_i\), the estimate
\(\widehat\alpha_i\), the active sets, and all candidate shifted bids are fixed.
The bids submitted during \(B_i\) may depend on the past history and may have
been generated using estimates available before the block, but each bid is
chosen before observing the current noise \(z_s\). Thus, for the purpose of
constructing the estimator from \(B_i\), the only randomness left in the
comparison with the competing bid comes from the fresh noise terms.

For the shifted bid \(\tilde b^k\), define the plug-in bid at evaluation
context \(x\) by
\[
b_{\widehat\alpha_i}^k(x)
:=
\tilde b^k+\widehat\alpha_i x .
\]
For the cost target, define
\[
c_{\widehat\alpha_i}(v^m,\tilde b^k;x)
=
b_{\widehat\alpha_i}^k(x)
G\!\left(\tilde b^k+(\widehat\alpha_i-\alpha)x\right).
\]
This is the true expected cost of submitting the plug-in bid
\(b_{\widehat\alpha_i}^k(x)\) under the true competitor model
\(d=\alpha x+z\). The proof first controls the estimator around this plug-in
target and then compares this target with the ideal shifted-coordinate target
defined using the true parameter \(\alpha\).

For each historical sample \(s\in B_i\), define
\[
\tilde b_s=b_s-\widehat\alpha_i x_s,
\qquad
\tilde d_s=d_s-\widehat\alpha_i x_s,
\]
and
\[
I_s^k
:=
\mathbf 1\{\tilde b^k\ge \tilde b_s\}
=
\mathbf 1\{\tilde b^k\ge b_s-\widehat\alpha_i x_s\}.
\]
Let \(\mathcal F_{s-1}\) be the history before observing the context at round
\(s\), and let
\[
\mathcal G_s
:=
\sigma(\mathcal F_{s-1},x_s,b_s,\widehat\alpha_i).
\]
Then \(I_s^k\) is \(\mathcal G_s\)-measurable. We condition on the realized
predictable quantities in \(\mathcal G_s\) throughout the concentration step;
in particular, \(n_i^k\) below is fixed after conditioning. The only remaining
randomness in the event \(\{\tilde b^k\ge \tilde d_s\}\) comes from the
independent noise \(z_s\).

We first note that the product indicator used by the estimator is observable
under one-sided feedback. If \(I_s^k=0\), then the product indicator is zero and
nothing needs to be checked. If \(I_s^k=1\), then
\[
\tilde b^k+\widehat\alpha_i x_s\ge b_s.
\]
On a losing round, \(d_s\) is observed directly, so the event
\(\{\tilde b^k\ge \tilde d_s\}\) can be checked. On a winning round,
\(d_s\le b_s\), and therefore
\[
d_s\le b_s\le \tilde b^k+\widehat\alpha_i x_s,
\]
which implies
\[
\tilde d_s=d_s-\widehat\alpha_i x_s\le \tilde b^k.
\]
Hence the event \(\{\tilde b^k\ge \tilde d_s\}\) holds automatically on winning
rounds whenever \(I_s^k=1\).

The cost estimator can be written as
\[
\widehat c_i(v^m,\tilde b^k;x)
=
\frac{1}{n_i^k\vee 1}
\sum_{s\in B_i}
I_s^k
\mathbf 1\{\tilde b^k\ge \tilde d_s\}
b_{\widehat\alpha_i}^k(x),
\]
where
\[
n_i^k=\sum_{s\in B_i}I_s^k.
\]
If \(n_i^k=0\), then the estimator is defined to be zero. Since costs are
bounded,
\[
\left|
\widehat c_i(v^m,\tilde b^k;x)
-
c_{\widehat\alpha_i}(v^m,\tilde b^k;x)
\right|
\le
\bar v,
\]
which is absorbed by
\[
C\sqrt{\frac{\log(T/\delta)}{n_i^k\vee 1}}
\]
for a sufficiently large universal constant \(C\). Thus, in the remainder of
the proof, we assume \(n_i^k\ge 1\).

Since
\[
\tilde d_s
=
d_s-\widehat\alpha_i x_s
=
z_s-(\widehat\alpha_i-\alpha)x_s,
\]
we have
\[
\{\tilde b^k\ge \tilde d_s\}
=
\left\{
z_s\le \tilde b^k+(\widehat\alpha_i-\alpha)x_s
\right\}.
\]
Because \(z_s\) is independent of \(\mathcal G_s\),
\[
\mathbb E\!\left[
\mathbf 1\{\tilde b^k\ge \tilde d_s\}
\mid \mathcal G_s
\right]
=
G\!\left(\tilde b^k+(\widehat\alpha_i-\alpha)x_s\right).
\]
Define
\[
p_s^k
:=
G\!\left(\tilde b^k+(\widehat\alpha_i-\alpha)x_s\right),
\]
and
\[
X_s^k
:=
I_s^k
\left(
\mathbf 1\{\tilde b^k\ge \tilde d_s\}
-
p_s^k
\right).
\]
Then
\[
\mathbb E[X_s^k\mid\mathcal G_s]=0.
\]
Thus \(\{X_s^k\}_{s\in B_i}\) is a martingale difference sequence with respect
to the filtration obtained by revealing, at each round, first \(\mathcal G_s\)
and then the auction outcome. Moreover, \(|X_s^k|\le 1\), and since \(X_s^k\)
is nonzero only when \(I_s^k=1\),
\[
\sum_{s\in B_i}
\mathbb E[(X_s^k)^2\mid\mathcal G_s]
\le
n_i^k.
\]
Freedman's inequality gives, for a fixed \((i,m,k)\),
\[
\left|
\sum_{s\in B_i}X_s^k
\right|
\le
C
\left(
\sqrt{(n_i^k\vee 1)\log(T/\delta)}
+
\log(T/\delta)
\right)
\]
with high probability. Since the total number of phases, value bins, and
shifted bids is polynomial in \(T\), a union bound yields that the same
inequality holds simultaneously for all relevant \((i,m,k)\) with probability
at least \(1-\delta\), after increasing the constant \(C\). This event does not
depend on the evaluation context \(x\); the dependence on \(x\) only enters
through the bounded multiplier \(b_{\widehat\alpha_i}^k(x)\).

Define the conditional mean
\[
\bar c_i(v^m,\tilde b^k;x)
:=
\frac{1}{n_i^k}
\sum_{s\in B_i}
I_s^k
b_{\widehat\alpha_i}^k(x)
G\!\left(\tilde b^k+(\widehat\alpha_i-\alpha)x_s\right).
\]
Since
\[
\widehat c_i(v^m,\tilde b^k;x)
-
\bar c_i(v^m,\tilde b^k;x)
=
\frac{b_{\widehat\alpha_i}^k(x)}{n_i^k}
\sum_{s\in B_i}X_s^k,
\]
and \(b_{\widehat\alpha_i}^k(x)\le \bar v\), the martingale bound implies
\[
\left|
\widehat c_i(v^m,\tilde b^k;x)
-
\bar c_i(v^m,\tilde b^k;x)
\right|
\le
C
\sqrt{\frac{\log(T/\delta)}{n_i^k\vee 1}}.
\]
Indeed, when \(n_i^k\ge \log(T/\delta)\), the linear term
\(\log(T/\delta)/n_i^k\) is dominated by the square-root term; when
\(n_i^k<\log(T/\delta)\), the left-hand side is bounded by a constant times
\(\bar v\), which is again absorbed by the square-root term.

It remains to compare the conditional mean \(\bar c_i\) with the plug-in
population target \(c_{\widehat\alpha_i}\). Since
\[
c_{\widehat\alpha_i}(v^m,\tilde b^k;x)
=
b_{\widehat\alpha_i}^k(x)
G\!\left(\tilde b^k+(\widehat\alpha_i-\alpha)x\right),
\]
we have
\[
\begin{aligned}
&
\left|
\bar c_i(v^m,\tilde b^k;x)
-
c_{\widehat\alpha_i}(v^m,\tilde b^k;x)
\right|
\\
&\le
\frac{1}{n_i^k}
\sum_{s\in B_i}I_s^k
b_{\widehat\alpha_i}^k(x)
\left|
G\!\left(\tilde b^k+(\widehat\alpha_i-\alpha)x_s\right)
-
G\!\left(\tilde b^k+(\widehat\alpha_i-\alpha)x\right)
\right|.
\end{aligned}
\]
By the Lipschitz continuity of \(G\) and the boundedness of contexts,
\[
\left|
G\!\left(\tilde b^k+(\widehat\alpha_i-\alpha)x_s\right)
-
G\!\left(\tilde b^k+(\widehat\alpha_i-\alpha)x\right)
\right|
\le
C|\widehat\alpha_i-\alpha|\,|x_s-x|
\le
C|\widehat\alpha_i-\alpha|\bar x.
\]
Since \(b_{\widehat\alpha_i}^k(x)\le \bar v\), we obtain
\[
\left|
\bar c_i(v^m,\tilde b^k;x)
-
c_{\widehat\alpha_i}(v^m,\tilde b^k;x)
\right|
\le
C|\widehat\alpha_i-\alpha|\bar x.
\]
Combining the martingale fluctuation term and the plug-in comparison gives
\[
\left|
\widehat c_i(v^m,\tilde b^k;x)
-
c_{\widehat\alpha_i}(v^m,\tilde b^k;x)
\right|
\le
C
\sqrt{\frac{\log(T/\delta)}{n_i^k\vee 1}}
+
C|\widehat\alpha_i-\alpha|\bar x.
\]

Next compare the plug-in target with the ideal shifted-coordinate target
\[
c_{\alpha}(v^m,\tilde b^k;x)
=
(\tilde b^k+\alpha x)G(\tilde b^k).
\]
Using \(0\le G\le 1\), the Lipschitz continuity of \(G\), and \(x\le \bar x\),
\[
\begin{aligned}
&
\left|
c_{\widehat\alpha_i}(v^m,\tilde b^k;x)
-
c_{\alpha}(v^m,\tilde b^k;x)
\right|
\\
&=
\left|
(\tilde b^k+\widehat\alpha_i x)
G\!\left(\tilde b^k+(\widehat\alpha_i-\alpha)x\right)
-
(\tilde b^k+\alpha x)G(\tilde b^k)
\right|
\\
&\le
|\widehat\alpha_i-\alpha|x
+
C\bar v
\left|
G\!\left(\tilde b^k+(\widehat\alpha_i-\alpha)x\right)
-
G(\tilde b^k)
\right|
\\
&\le
C|\widehat\alpha_i-\alpha|\bar x .
\end{aligned}
\]
Therefore,
\[
\left|
\widehat c_i(v^m,\tilde b^k;x)
-
c_{\alpha}(v^m,\tilde b^k;x)
\right|
\le
C
\sqrt{\frac{\log(T/\delta)}{n_i^k\vee 1}}
+
C|\widehat\alpha_i-\alpha|\bar x.
\]

Now take the minimum effective sample size over active shifted bids in bin \(m\):
\[
N_i^m
=
\min_{\tilde b^k\in\mathcal B_{i-1}^m} n_i^k.
\]
Since \(N_i^m\le n_i^k\), we have
\[
\sqrt{\frac{\log(T/\delta)}{n_i^k\vee 1}}
\le
\sqrt{\frac{\log(T/\delta)}{N_i^m\vee 1}}
=
w_i^m.
\]
Hence
\[
\left|
\widehat c_i(v^m,\tilde b^k;x)
-
c_{\alpha}(v^m,\tilde b^k;x)
\right|
\le
Cw_i^m
+
C|\widehat\alpha_i-\alpha|\bar x.
\]

The reward estimator is handled in exactly the same way. Define
\[
r_{\widehat\alpha_i}(v^m,\tilde b^k;x)
=
\bigl(v^m-b_{\widehat\alpha_i}^k(x)\bigr)
G\!\left(\tilde b^k+(\widehat\alpha_i-\alpha)x\right),
\]
and
\[
r_{\alpha}(v^m,\tilde b^k;x)
=
\bigl(v^m-(\tilde b^k+\alpha x)\bigr)G(\tilde b^k).
\]
The martingale difference sequence is the same as above; only the bounded
multiplier \(b_{\widehat\alpha_i}^k(x)\) is replaced by
\[
v^m-b_{\widehat\alpha_i}^k(x).
\]
The same Freedman argument gives the martingale fluctuation term. The comparison
between the conditional mean and \(r_{\widehat\alpha_i}\) is again controlled by
the Lipschitz continuity of \(G\):
\[
\left|
\bar r_i(v^m,\tilde b^k;x)
-
r_{\widehat\alpha_i}(v^m,\tilde b^k;x)
\right|
\le
C|\widehat\alpha_i-\alpha|\bar x.
\]
Finally,
\[
\left|
r_{\widehat\alpha_i}(v^m,\tilde b^k;x)
-
r_{\alpha}(v^m,\tilde b^k;x)
\right|
\le
C|\widehat\alpha_i-\alpha|\bar x,
\]
because both the bid multiplier and the probability term change by at most
order \(|\widehat\alpha_i-\alpha|\bar x\). Therefore,
\[
\left|
\widehat r_i(v^m,\tilde b^k;x)
-
r_{\alpha}(v^m,\tilde b^k;x)
\right|
\le
Cw_i^m
+
C|\widehat\alpha_i-\alpha|\bar x.
\]

It remains to prove the one-step Lagrangian comparison used in
Theorem~\ref{thm:main-regret}. Fix a round \(t\in A_i\cup B_i\). The decision
at round \(t\) is made using the estimates from the previous phase, namely
\(\widehat r_{i-1}\), \(\widehat c_{i-1}\), and \(\widehat\alpha_{i-1}\).
Let
\[
\widetilde v_t:=\frac{v_t}{1+\lambda_t}.
\]
By construction,
\[
v^{m(t)}
=
\max\{u\in V:u\le \widetilde v_t\},
\]
so
\[
|v^{m(t)}-\widetilde v_t|\le \Delta_v\le \frac{\bar v}{\sqrt T}.
\]
For any bid \(b\), using
\[
r(v_t,b)-\lambda_t c(v_t,b)
=
(1+\lambda_t)r(\widetilde v_t,b),
\]
and the Lipschitz continuity of the reward in the value argument, we have
\[
\left|
r(v_t,b)-\lambda_t c(v_t,b)
-
(1+\lambda_t)r(v^{m(t)},b)
\right|
\le
C(1+\bar\lambda)\frac{\bar v}{\sqrt T}.
\]

Now consider any comparator bid \(b\in[0,v_t]\), and write its shifted version
at the representative context \(x^{m(t)}\) as
\[
\tilde b(b):=b-\alpha x^{m(t)}.
\]
Let \(\tilde b^{k(b)}\) be a nearest grid point to \(\tilde b(b)\). Since
\(\Delta_b=\bar v/\lceil\sqrt T\rceil\), the discretization error contributes
at most \(C\bar v/\sqrt T\) to the reward and cost terms. By the shifted-bid
monotonicity property and the monotone filtering rule, the active set
\(\mathcal B_{i-1}^{m(t)}\) contains a grid point \(\tilde b^\circ\) whose
true shifted-coordinate reward is within \(C\bar v/\sqrt T\) of this comparator.
On the confidence event proved above, the empirical reward of every active bid
is within
\[
Cw_{i-1}^{m(t)}+C\bar x\beta_{i-1}
\]
of its true shifted-coordinate reward. Since Algorithm~\ref{alg:budget-bidding}
selects an active feasible shifted bid and the elimination rule keeps all bids
whose empirical reward is within \(2w_{i-1}^{m(t)}\) of the empirical best, we
obtain
\[
r(v^{m(t)},b_t)
\ge
r(v^{m(t)},b)
-
Cw_{i-1}^{m(t)}
-
C\bar x\beta_{i-1}
-
C\frac{\bar v}{\sqrt T}.
\]
Multiplying by \(1+\lambda_t\le 1+\bar\lambda\), and translating back from the
shaded value \(\widetilde v_t\) to \(v_t\), yields
\[
r(v_t,b_t)-\lambda_t c(v_t,b_t)
\ge
r(v_t,b)-\lambda_t c(v_t,b)
-
C(1+\bar\lambda)
\left(
w_{i-1}^{m(t)}
+
\bar x\beta_{i-1}
+
\frac{\bar v}{\sqrt T}
\right).
\]
Taking \(b=b_t^\star\) and rearranging gives
\[
r(v_t,b_t^\star)-r(v_t,b_t)
\le
C(1+\bar\lambda)w_{i-1}^{m(t)}
+
C(1+\bar\lambda)\bar x\beta_{i-1}
+
C\frac{(1+\bar\lambda)\bar v}{\sqrt T}
-
\lambda_t
\bigl(c(v_t,b_t)-c(v_t,b_t^\star)\bigr).
\]
This completes the proof.
\end{proof}

\subsection{Proof of Lemma~\ref{lem:confidence-sum-bound}}
\label{app:lem:confidence-sum-bound}

We first state and prove three auxiliary lemmas. The first lemma establishes the
monotonicity of the shifted bids selected by the algorithm. The second lemma
lower bounds the effective sample size by a value-rank quantity. The third lemma
controls the sum of the inverse square roots of these ranks.

\begin{lemma}[Monotonicity of selected shifted bids]
\label{lem:selected-shifted-bid-monotonicity}
Fix a phase \(i\) and work on the good event. The shifted bids selected by
Algorithm~2 are nondecreasing in the value. That is, for any two rounds
\(s,t\in A_i\cup B_i\), if \(v_s\le v_t\), then
\[
\tilde b_s\le \tilde b_t .
\]
\end{lemma}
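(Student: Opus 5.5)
The plan is to factor the selection rule of Algorithm~\ref{alg:budget-bidding} into two monotone maps. Within phase $i$, the bid chosen at round $t$ depends on the history only through the bin index $m(t)$, because $\mathcal B_{i-1}^{m}$ and $\hat\alpha_{i-1}$ are frozen during $A_i\cup B_i$. Write
\[
\tilde b_t=\Psi_{i}(m(t)),\qquad \Psi_{i}(m):=\inf\{\tilde b\in\mathcal B_{i-1}^{m}:0\le \tilde b+\hat\alpha_{i-1}x^{m}\le v^{m}\}.
\]
It then suffices to prove two things: (a) $\Psi_i$ is nondecreasing in $m$, and (b) $v_s\le v_t$ implies $m(s)\le m(t)$. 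Composing (a) and (b) gives the lemma.

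\textbf{Step (a).} I would argue by induction over phases that $m\mapsto \inf \mathcal B_{i-1}^m$ is nondecreasing. The base case holds because $\mathcal B_0^m=\widetilde{\mathcal B}$ for every $m$.

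The monotone filtering rule enforces $\inf\mathcal B^m\ge \max_{s<m}\inf\mathcal B^s$ immediately after filtering. The difficulty is the subsequent $2w_i^m$ elimination, which could raise $\inf\mathcal B^m$ for a low bin above that of a higher bin. To rule this out I would use the single-crossing identity from the proof of Lemma~\ref{lem:residual-monotonicity}:
\[
\Phi(A_2,b'')-\Phi(A_2,b')-\bigl(\Phi(A_1,b'')-\Phi(A_1,b')\bigr)=(A_2-A_1)\bigl(G(b'')-G(b')\bigr)\ge 0\quad\text{for } b''>b',\ A_2>A_1.
\]
Consequently, if a small bid $b'$ is eliminated in bin $m$ because it is dominated, it is dominated by at least as much in every bin $m'>m$.

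On the good event, Lemma~\ref{lem:confidence-representation} transfers this ordering to the empirical rewards up to errors of order $w_i^m+\bar x\beta_i$. I would therefore inflate the elimination threshold constant so that a bid removed in bin $m$ is also removed in bin $m'$, or at least never becomes the infimum there.

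Finally, I would check the feasibility truncation. The lower limit $-\hat\alpha_{i-1}x^m$ and the upper limit $v^m-\hat\alpha_{i-1}x^m$ both move monotonically in $m$: the first is monotone because $x^m$ is increasing, and the second is increasing by Assumption~\ref{ass:Superlinear} up to the $\beta$-error. Hence taking the infimum over the feasible slice preserves the ordering.

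\textbf{Step (b).} The bin index is $m(t)=\lfloor v_t/((1+\lambda_t)\Delta_v)\rfloor$. When $\lambda_s=\lambda_t$, monotonicity is immediate. In general, within a phase the multipliers satisfy $|\lambda_s-\lambda_t|\le \eta\,|s-t|(\bar v+\epsilon_T)$ by the update rule and Lemma~\ref{lem:dual-bound}.

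I would proceed in two cases. If $v_s/(1+\lambda_s)\le v_t/(1+\lambda_t)$, step (b) holds directly. Otherwise, the reversal forces $v_t-v_s\le \bar v\,|\lambda_s-\lambda_t|$. In that case I would argue that the values in the selection rule and in the statement should be read as the shaded values $\tilde v=v/(1+\lambda)$ that the algorithm actually bins; these are the quantities used in the downstream rank argument of Lemma~\ref{lem:confidence-sum-bound}.

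I expect step (b) to be the main obstacle, precisely because of the time-varying $\lambda$. What I would try first is to show that the reversed configurations contribute only bins differing by $O(\eta\ell_i\bar v/\Delta_v)$. I would then absorb those configurations into the rank counting rather than claiming exact monotonicity for the raw values $v$.
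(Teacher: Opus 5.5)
\textbf{There is a genuine gap.} Your proposal does not prove the lemma as stated. You concede step (b) for raw values and replace the statement with a reinterpretation, and step (a) rests on an unspecified change to the algorithm. That said, your diagnosis of (b) is correct, and the paper's proof has the same gap rather than closing it.

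\textbf{Step (b).} The paper's argument has exactly your structure. It asserts the lower-envelope invariant $\inf\mathcal B_{i-1}^{m_1}\le\inf\mathcal B_{i-1}^{m_2}$ for $m_1\le m_2$, shows the selection rule returns $\inf\mathcal B_{i-1}^{m(t)}$, and then concludes monotonicity ``in the value,'' silently identifying value order with bin order. Since $m(t)$ is determined by $v_t/(1+\lambda_t)$ and $\lambda_t$ is updated every round, that identification fails. For example, with $\lambda_s=0$, $\lambda_t=1$ and $v_s=0.5<v_t=0.6$, one gets $m(s)>m(t)$, and then $\tilde b_s=\inf\mathcal B_{i-1}^{m(s)}$ may strictly exceed $\tilde b_t$. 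So no argument closes (b) for raw values; what is actually supported is $m(s)\le m(t)\Rightarrow\tilde b_s\le\tilde b_t$. Your quantitative fallback is vacuous: with $\eta=T^{-1/2}$ and $\Delta_v=\bar v/\lceil\sqrt T\rceil$, your bin-discrepancy bound $O(\eta\ell_i\bar v/\Delta_v)$ is of order $\ell_i\ge\lceil\sqrt T\rceil=M$ in every phase, no better than the trivial bound. Absorbing the reversals into the rank counting is not free either. The downstream rank argument is stated for raw values, and Lemma~\ref{lem:rank-counting-bound} uses that $v_t$ is a fresh draw exchangeable with the history, which the shaded value $v_t/(1+\lambda_t)$ is not.

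\textbf{Step (a).} You are right that the paper only asserts the invariant and that the $2w_i^m$ elimination can break it. The monotone filter is applied before elimination, so the sets $\mathcal B_i^m$ used in phase $i+1$ are never re-filtered. Your repair does not work, however, for three reasons.
\begin{enumerate}
\item A common inflation of the threshold cannot make elimination in bin $m$ imply elimination in bin $m'>m$. Passing the gap through the single-crossing identity costs errors of order $w_i^m+w_i^{m'}+\bar x\beta_i$. The threshold in bin $m'$ is $2w_i^{m'}$, and $N_i^{m'}$ is a bin-specific minimum that can be smaller than $N_i^m$.
\item Bids below the new infimum of bin $m$ that remain active in bin $m'$ need not lie in $\mathcal B^m$ at all, since they may have left bin $m$ in an earlier phase. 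Then there is no elimination to transfer.
\item Changing the threshold changes Algorithm~\ref{alg:budget-bidding}, so you would be proving the lemma for a different algorithm.
\end{enumerate}
If you accept modifying the algorithm, the clean route is to enforce the envelope directly. Either apply the filter after elimination, sequentially in $m$, or select $\max_{s\le m}\inf\mathcal B_{i-1}^{s}$; then (a) holds by construction. The burden moves to showing that near-optimal bids survive, which is where Lemma~\ref{lem:residual-monotonicity} enters.

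\textbf{Feasibility step.} You use the wrong mechanism here. If $\hat\alpha_{i-1}>0$, the lower limit $-\hat\alpha_{i-1}x^m$ \emph{decreases} in $m$, and it would break the ordering if it ever bound. The paper's correct point is that it never binds, because $\widetilde{\mathcal B}\subseteq\mathbb R_+$, while the upper limit only truncates from above. Hence the selected bid equals $\inf\mathcal B_{i-1}^{m}$ whenever the feasible set is nonempty. The monotonicity of the upper limit matters only for nonemptiness.
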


\begin{proof}
The oracle monotonicity justifies the cross-bin elimination rule, ensuring that
the near-optimal shifted bids are not discarded. In addition, the rule maintains
a monotone lower-envelope invariant for the active sets:
\[
\inf B_{i-1}^{m_1}\le \inf B_{i-1}^{m_2},\qquad m_1\le m_2.
\]
Since \(\alpha>0\) and, on the good event, \(\hat\alpha_{i-1}>0\), the lower
feasibility constraint
\[
0\le \tilde b+\hat\alpha_{i-1}x^m
\]
is automatically satisfied for all
\(\tilde b\in\widetilde B\subseteq\mathbb R_+\). Moreover, whenever the feasible
active set is nonempty, the upper feasibility constraint only truncates the
active interval from above. Hence the smallest feasible active shifted bid
coincides with the lower endpoint of the active set. Therefore, by the
monotone lower-envelope invariant, the shifted bid selected by the algorithm is
nondecreasing in the value.
\end{proof}

\begin{lemma}[Rank lower bound]
\label{lem:rank-lower-bound}
For every phase \(i\) and every round \(t\in A_i\cup B_i\),
\[
N_i(t)\ge R_{i,t}-1,
\qquad
R_{i,t}
:=
1+\sum_{s\in\mathcal H_i}\mathbf 1\{v_s\le v_t\}.
\]
Here \(N_i(t)\) is the effective sample size associated with the confidence
radius used at round \(t\), and \(\mathcal H_i\) denotes the historical samples
used to construct the estimates available at the beginning of phase \(i\).
\end{lemma}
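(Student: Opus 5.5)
The plan is to reduce the effective sample size $N_i(t)$ to a count of historical rounds whose selected shifted bid lies weakly below the bid that defines the radius at round $t$, and then to use the monotonicity of Lemma~\ref{lem:selected-shifted-bid-monotonicity} to turn that count into the value-rank $R_{i,t}$.

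\textbf{Step 1: unpack $N_i(t)$.} The confidence radius at round $t$ is $w_{i-1}^{m(t)}$, built from $N^{m(t)}=\min_{\tilde b^k\in\mathcal B^{m(t)}}n^k$. Here $n^k=\sum_{s\in\mathcal H_i}\I\{\tilde b^k\ge\tilde b_s\}$ is monotone in $\tilde b^k$. The minimum over the active set is therefore attained at the smallest active shifted bid, $\inf\mathcal B^{m(t)}$. By the argument in Lemma~\ref{lem:selected-shifted-bid-monotonicity}, this lower endpoint is exactly the shifted bid $\tilde b_t$ chosen by the algorithm (the lower feasibility constraint is inactive on the good event). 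Hence
\[
N_i(t)=\sum_{s\in\mathcal H_i}\I\{\tilde b_t\ge\tilde b_s\}.
\]

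\textbf{Step 2: compare with the value rank.} Let $s\in\mathcal H_i$ with $v_s\le v_t$. Suppose $s$ lies in the same phase, so it was produced by the same active sets. Then Lemma~\ref{lem:selected-shifted-bid-monotonicity} gives $\tilde b_s\le\tilde b_t$, so every such $s$ contributes to $N_i(t)$. Summing gives $N_i(t)\ge\sum_{s}\I\{v_s\le v_t\}=R_{i,t}-1$. The $-1$ absorbs the self-term and the convention $R_{i,t}\ge1$.

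There is one subtlety. Monotonicity is stated in the value $v$, but the algorithm bins by the shaded value $v_t/(1+\lambda_t)$. I would either interpret $v$ in the lemma as the binned shaded value $v^{m(t)}$, which is what indexes the active sets, or note that within the rank definition the same convention is used.

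\textbf{Main obstacle.} The hardest part is that $\mathcal H_i$ consists of samples from the previous block. Their shifted bids were recomputed as $\tilde b_s=b_s-\hat\alpha_i x_s$ with the new estimate, while $b_s$ was chosen using $\hat\alpha_{i-1}$ and older active sets. I would first try to show the monotone ordering is preserved under this recentering:
\[
b_s-\hat\alpha x_s=\tilde b^{\mathrm{old}}_s+(\hat\alpha_{i-1}-\hat\alpha_i)x_s .
\]
The correction is monotone in $x_s$, and hence in $v_s$. Since the active sets only shrink via the monotone filter, the lower endpoints are nested and monotone across phases. If the correction term breaks exact ordering, I would fall back to a slack version of the rank, accepting an additive $O(\bar x\beta_{i-1})$ misordering. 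This slack only affects the downstream sum by lower-order terms.
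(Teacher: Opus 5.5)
Your Steps 1--2 reproduce the paper's proof. The paper asserts $N_i(t)=\sum_{s\in\mathcal H_i}\mathbf 1\{\tilde b_s\le\tilde b_t\}$ (your Step 1 spells out this identification), applies Lemma~\ref{lem:selected-shifted-bid-monotonicity} to every $s\in\mathcal H_i$ with $v_s\le v_t$, and sums. It never separates historical from same-phase samples, and it never mentions recentering or the shaded bins. The obstacles you list are therefore genuine and the paper passes over them. Your proposal does not close them either, and one step needs correcting.

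Step 1 is off by one update. At $t\in A_i\cup B_i$ the radius is $w_{i-1}^{m(t)}$. The quantity $N_{i-1}^{m}$ is a minimum over the active set as it stood \emph{before} the reward-based elimination that produced $\mathcal B_{i-1}^{m}$. That set's lower endpoint can lie strictly below $\tilde b_t=\inf\mathcal B_{i-1}^{m(t)}$, so you only get $N\le\sum_s\mathbf 1\{\tilde b_s\le\tilde b_t\}$, which is the wrong direction. The right threshold is the lower endpoint of the set over which the minimum is taken. Up to the monotone filter, that is exactly the active set from which the bids in $\mathcal H_i$ were drawn. This correction works in your favor: the ``older active sets'' part of your obstacle becomes a same-phase comparison of lower endpoints.

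Two gaps remain, and your fixes do not handle them.

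(i) Recentering. Bids are placed at the representative context, so $b_s-\hat\alpha_{\mathrm{new}}x_s=\tilde b_s^{\mathrm{old}}+\hat\alpha_{\mathrm{old}}x^{m(s)}-\hat\alpha_{\mathrm{new}}x_s$, not $\tilde b_s^{\mathrm{old}}+(\hat\alpha_{\mathrm{old}}-\hat\alpha_{\mathrm{new}})x_s$. This correction has no definite sign. Your slack fallback does not give a lower-order error, because the count $n^k$ is a step function of the threshold, not a Lipschitz one. Historical samples whose played grid bid equals the threshold (for example, samples from bin $m(t)$ itself) can all drop out under an arbitrarily small positive correction. Nothing bounds their number relative to $R_{i,t}$.

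(ii) Shading. When $\lambda_s\ne\lambda_t$, $v_s\le v_t$ does not imply $m(s)\le m(t)$. $R_{i,t}$ is defined with raw values, so there is no ``same convention'' to appeal to. Redefining it with shaded values changes the statement and breaks Lemma~\ref{lem:rank-counting-bound}. That lemma needs $v_t$ to be a fresh draw exchangeable with the historical values, and $v_t/(1+\lambda_t)$ with history-dependent $\lambda_t$ is not.

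Closing the argument requires changing the rank (or the estimator's indicator) and re-checking the downstream lemmas, not just adding slack.
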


\begin{proof}
Fix a phase \(i\) and a round \(t\in A_i\cup B_i\). Let \(\tilde b_t\) be the
selected shifted bid at round \(t\). By
Lemma~\ref{lem:selected-shifted-bid-monotonicity}, on the good event, the
selected shifted bid is nondecreasing in the value. Therefore, for every
historical sample \(s\in\mathcal H_i\) such that
\[
v_s\le v_t,
\]
we have
\[
\tilde b_s\le \tilde b_t .
\]
Hence every historical sample with value no larger than \(v_t\) contributes to
the effective sample size associated with the confidence radius used at round
\(t\). In other words,
\[
\mathbf 1\{v_s\le v_t\}
\le
\mathbf 1\{\tilde b_s\le \tilde b_t\},
\qquad s\in\mathcal H_i .
\]
Summing over \(s\in\mathcal H_i\), we obtain
\[
\sum_{s\in\mathcal H_i}\mathbf 1\{v_s\le v_t\}
\le
\sum_{s\in\mathcal H_i}\mathbf 1\{\tilde b_s\le \tilde b_t\}
=
N_i(t).
\]
By the definition
\[
R_{i,t}
=
1+\sum_{s\in\mathcal H_i}\mathbf 1\{v_s\le v_t\},
\]
this gives
\[
R_{i,t}-1\le N_i(t),
\]
which proves the lemma.
\end{proof}

\begin{lemma}[Rank-counting bound]
\label{lem:rank-counting-bound}
Under the doubling phase schedule,
\[
\mathbb E\!\left[
\sum_{i=1}^{n}\sum_{t\in A_i\cup B_i}
\frac{1}{\sqrt{R_{i,t}\vee 1}}
\right]
\le
C\sqrt T\log T .
\]
\end{lemma}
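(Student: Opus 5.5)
We will bound the expected rank sum one phase at a time and then sum over the doubling schedule. Fix a phase \(i\) and write \(H:=|\mathcal H_i|\) for the number of historical samples behind the estimates used in phase \(i\). Under the doubling schedule, \(\mathcal H_i\) contains at least the preceding update block (length \(\ell_{i-1}\), or the exploration block \(T_0\) when \(i=1\)). Hence \(H\ge c\,|A_i\cup B_i|\) for a universal \(c>0\), and \(|A_i\cup B_i|\le 2\ell_i\). The number of phases is \(n=O(\log T)\).

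\textbf{Step 1 (exchangeability of the current round).} For \(t\in A_i\cup B_i\), the value \(v_t=f(x_t)\) is a fresh i.i.d. draw. It is independent of the historical values \(\{v_s\}_{s\in\mathcal H_i}\), which are themselves i.i.d. from the same distribution, because contexts are i.i.d. and do not depend on the bidding policy. So \(R_{i,t}-1=\sum_{s\in\mathcal H_i}\mathbf 1\{v_s\le v_t\}\) has the law of the rank of one extra sample among \(H+1\) exchangeable draws. Since \(f\) is invertible and the context distribution is non-degenerate (continuous), ties have probability zero. Then \(R_{i,t}\) is uniform on \(\{1,\dots,H+1\}\), and
\[
\mathbb E\!\left[\frac{1}{\sqrt{R_{i,t}}}\right]=\frac{1}{H+1}\sum_{r=1}^{H+1}\frac{1}{\sqrt r}\le \frac{2}{\sqrt{H+1}} .
\]
If ties could occur, the rank only increases (the indicator is \(\le\)), so the value stochastically dominates the uniform rank and the bound still holds. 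No conditioning on the algorithm's choices is needed, because \(R_{i,t}\) depends only on values.

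\textbf{Step 2 (summing within and across phases).} Summing over \(t\in A_i\cup B_i\) gives
\[
\mathbb E\sum_{t\in A_i\cup B_i}\frac{1}{\sqrt{R_{i,t}}}\le \frac{2|A_i\cup B_i|}{\sqrt{H}}\le C\sqrt{\ell_i}.
\]
We then sum over phases. Since \(\ell_i=2^{i-1}h\) and \(\sum_i\ell_i\le T\), the geometric sum gives \(\sum_{i=1}^n\sqrt{\ell_i}\le C\sqrt T\). That is already better than \(C\sqrt T\log T\).

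\textbf{Main obstacle.} The delicate point is identifying \(\mathcal H_i\) precisely enough to guarantee \(H\gtrsim|A_i\cup B_i|\), in particular for the first phase and the possibly truncated final phase. If only the block \(B_{i-1}\) is used, \(H=\ell_{i-1}=\ell_i/2\), which suffices. For \(i=1\), the block \(T_0\) of length \(2h\) matches \(\ell_1=h\) up to constants.

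As a fallback, we can avoid the exact uniform-rank argument. We would instead use a per-phase bound valid for any fixed set: \(\sum_{r\le N}r^{-1/2}\le 2\sqrt N\), combined with a DKW-type concentration of the empirical CDF of historical values. That route costs an extra \(\sqrt{\log T}\) factor per phase, and hence the stated \(\log T\) overall.

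Finally, if stopping truncates a phase, the rounds in that phase only shrink, so the bound still holds.
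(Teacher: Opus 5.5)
Your proposal is correct and follows essentially the same route as the paper: the fresh value $v_t$ is independent of the $H_i$ historical values, so $R_{i,t}$ is (stochastically at least) uniform on $\{1,\ldots,H_i+1\}$, which gives $\mathbb E[R_{i,t}^{-1/2}]\le 2/\sqrt{H_i+1}$, and the doubling schedule ensures $H_i\ge c\,|A_i\cup B_i|$, so each phase contributes $O(\sqrt{\ell_i})$. Your coupling argument for ties is cleaner than the paper's tie-breaking remark, and your geometric sum $\sum_i\sqrt{\ell_i}\le C\sqrt T$ is tighter than the paper's $C\sqrt T\log T$, so the DKW fallback is unnecessary.
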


\begin{proof}
Fix a phase \(i\), and condition on the historical sample set
\(\mathcal H_i\). Let
\[
H_i:=|\mathcal H_i|.
\]
For a fresh round \(t\in A_i\cup B_i\), the value \(v_t\) is independent of
\(\mathcal H_i\) and is drawn from the same distribution as the historical
values. Recall that
\[
R_{i,t}
=
1+\sum_{s\in\mathcal H_i}\mathbf 1\{v_s\le v_t\}.
\]
Thus \(R_{i,t}\) is the rank of \(v_t\) relative to the historical values, up to
the convention that ranks start from one.

More explicitly, conditional on \(\mathcal H_i\), the random variable
\[
\sum_{s\in\mathcal H_i}\mathbf 1\{v_s\le v_t\}
\]
counts how many historical values are below the fresh value \(v_t\). Therefore,
up to ties,
\[
R_{i,t}\in\{1,2,\ldots,H_i+1\}.
\]
If the value distribution is continuous, this rank is exactly uniform on
\(\{1,\ldots,H_i+1\}\). If ties may occur, the same upper bound below follows
by an arbitrary deterministic or randomized tie-breaking rule, since ties can
only change the rank by a constant-order convention.

Hence,
\[
\mathbb E\!\left[
\frac{1}{\sqrt{R_{i,t}\vee1}}
\,\middle|\,
\mathcal H_i
\right]
\le
\frac{1}{H_i+1}
\sum_{r=1}^{H_i+1}
\frac{1}{\sqrt r}.
\]
Using the elementary bound
\[
\sum_{r=1}^{H_i+1}\frac{1}{\sqrt r}
\le
1+\int_1^{H_i+1}\frac{1}{\sqrt x}\,dx
\le
2\sqrt{H_i+1},
\]
we obtain
\[
\mathbb E\!\left[
\frac{1}{\sqrt{R_{i,t}\vee1}}
\,\middle|\,
\mathcal H_i
\right]
\le
\frac{2}{\sqrt{H_i+1}}.
\]

Summing over all rounds in the phase gives
\[
\mathbb E\!\left[
\sum_{t\in A_i\cup B_i}
\frac{1}{\sqrt{R_{i,t}\vee1}}
\,\middle|\,
\mathcal H_i
\right]
\le
\frac{2|A_i\cup B_i|}{\sqrt{H_i+1}}.
\]

By the doubling phase construction, the amount of history available at the
beginning of phase \(i\) is comparable to the previous phase length, while
\(|A_i\cup B_i|\) is at most a constant multiple of this quantity. Hence there
exists a universal constant \(C_0>0\) such that
\[
|A_i\cup B_i|
\le
C_0(H_i+1).
\]
Therefore,
\[
\frac{2|A_i\cup B_i|}{\sqrt{H_i+1}}
\le
2C_0\sqrt{H_i+1}.
\]
Under the same doubling construction, \(H_i+1\) is also at most a constant
multiple of \(|A_i\cup B_i|\), after increasing the constant for the first
phase if necessary. Hence
\[
2C_0\sqrt{H_i+1}
\le
C\sqrt{|A_i\cup B_i|}.
\]
Taking expectation over \(\mathcal H_i\), we get
\[
\mathbb E\!\left[
\sum_{t\in A_i\cup B_i}
\frac{1}{\sqrt{R_{i,t}\vee1}}
\right]
\le
C\sqrt{|A_i\cup B_i|}.
\]

Finally, summing over phases yields
\[
\mathbb E\!\left[
\sum_{i=1}^{n}
\sum_{t\in A_i\cup B_i}
\frac{1}{\sqrt{R_{i,t}\vee1}}
\right]
\le
C\sum_{i=1}^{n}\sqrt{|A_i\cup B_i|}.
\]
The phase lengths grow geometrically and the total horizon is \(T\), so
\[
\sum_{i=1}^{n}\sqrt{|A_i\cup B_i|}
\le
C\sqrt T\log T .
\]
Combining the last two displays gives
\[
\mathbb E\!\left[
\sum_{i=1}^{n}
\sum_{t\in A_i\cup B_i}
\frac{1}{\sqrt{R_{i,t}\vee1}}
\right]
\le
C\sqrt T\log T .
\]
This proves the lemma.
\end{proof}

\begin{proof}[Proof of Lemma~\ref{lem:confidence-sum-bound}]
Let
\[
w_{i-1}(t):=w_{i-1}^{m(t)}
\]
denote the confidence radius used at round \(t\in A_i\cup B_i\). Let
\(\mathcal E\) be the intersection of the high-probability events used in the
phase analysis, including the monotonicity event, the concentration events, and
the discretization events.

On \(\mathcal E\), by the definition of the confidence radius,
\[
w_{i-1}(t)
\le
C\sqrt{\frac{\log(T/\delta)}{N_i(t)\vee 1}} .
\]
By Lemma~\ref{lem:rank-lower-bound},
\[
N_i(t)\ge R_{i,t}-1.
\]
Therefore,
\[
\frac{1}{\sqrt{N_i(t)\vee 1}}
\le
\frac{C}{\sqrt{R_{i,t}\vee 1}},
\]
and hence
\[
w_{i-1}(t)
\le
C\sqrt{\log(T/\delta)}
\frac{1}{\sqrt{R_{i,t}\vee 1}} .
\]
Summing over all phases and rounds gives
\[
\sum_{i=1}^{n}\sum_{t\in A_i\cup B_i}
w_{i-1}(t)\mathbf 1_{\mathcal E}
\le
C\sqrt{\log(T/\delta)}
\sum_{i=1}^{n}\sum_{t\in A_i\cup B_i}
\frac{1}{\sqrt{R_{i,t}\vee 1}} .
\]
Taking expectations and applying Lemma~\ref{lem:rank-counting-bound}, we obtain
\[
\mathbb E\!\left[
\sum_{i=1}^{n}\sum_{t\in A_i\cup B_i}
w_{i-1}(t)\mathbf 1_{\mathcal E}
\right]
\le
C\sqrt T\log T .
\]

It remains to control the failure event. Since the confidence radii are
uniformly bounded, \(0\le w_{i-1}(t)\le C\). Therefore,
\[
\mathbb E\!\left[
\sum_{i=1}^{n}\sum_{t\in A_i\cup B_i}
w_{i-1}(t)\mathbf 1_{\mathcal E^c}
\right]
\le
CT\,\mathbb P(\mathcal E^c).
\]
Choosing the failure probabilities so that
\[
\mathbb P(\mathcal E^c)\le T^{-2},
\]
the failure-event contribution is \(O(1)\). Combining the good-event and
failure-event bounds yields
\[
\mathbb E\!\left[
\sum_{i=1}^{n}\sum_{t\in A_i\cup B_i}
w_{i-1}^{m(t)}
\right]
\le
C\sqrt T\log T .
\]
This proves the lemma.
\end{proof}

\subsection{Proof of Lemma~\ref{lem:c-bound}}
\label{app:c-bound}

\begin{proof}
The dual update can be written as the projection of
\[
\lambda_t-\eta(\rho-\widehat c_t)
\]
onto \(\mathbb R_+\). By the non-expansiveness of projection and by comparing
with \(0\), we have
\[
\lambda_{t+1}^2
\le
\bigl(\lambda_t-\eta(\rho-\widehat c_t)\bigr)^2 .
\]
Expanding the right-hand side gives
\[
\lambda_{t+1}^2
\le
\lambda_t^2
-
2\eta\lambda_t(\rho-\widehat c_t)
+
\eta^2(\rho-\widehat c_t)^2 .
\]
Rearranging yields
\[
2\eta\lambda_t(\rho-\widehat c_t)
\le
\lambda_t^2-\lambda_{t+1}^2
+
\eta^2(\rho-\widehat c_t)^2 .
\]
Dividing both sides by \(2\eta\), we obtain
\[
\lambda_t(\rho-\widehat c_t)
\le
\frac{\lambda_t^2-\lambda_{t+1}^2}{2\eta}
+
\frac{\eta}{2}(\rho-\widehat c_t)^2 .
\]

Summing over \(t=1,\ldots,T\), we get
\[
\sum_{t=1}^T\lambda_t(\rho-\widehat c_t)
\le
\frac{1}{2\eta}
\sum_{t=1}^T
\left(
\lambda_t^2-\lambda_{t+1}^2
\right)
+
\frac{\eta}{2}
\sum_{t=1}^T
(\rho-\widehat c_t)^2 .
\]
The first term telescopes:
\[
\sum_{t=1}^T
\left(
\lambda_t^2-\lambda_{t+1}^2
\right)
=
\lambda_1^2-\lambda_{T+1}^2
\le
\lambda_1^2 .
\]
Since the algorithm initializes the dual variable at
\[
\lambda_1=0,
\]
we have
\[
\lambda_1^2-\lambda_{T+1}^2
\le 0.
\]
Therefore,
\[
\sum_{t=1}^T\lambda_t(\rho-\widehat c_t)
\le
\frac{\eta}{2}
\sum_{t=1}^T
(\rho-\widehat c_t)^2 .
\]

By the boundedness of feasible bids and the construction of the cost estimator,
there exists a universal constant \(C>0\) such that
\[
|\rho-\widehat c_t|
\le
C\bar v,
\qquad
t=1,\ldots,T.
\]
Hence
\[
\sum_{t=1}^T\lambda_t(\rho-\widehat c_t)
\le
C\eta T\bar v^2 .
\]
Equivalently,
\[
\sum_{t=1}^T\lambda_t(\widehat c_t-\rho)
\ge
-C\eta T\bar v^2 .
\tag{A}
\]

Now let
\[
b_t^\star=b^\star(v_t,x_t)
\]
be the relaxed stationary benchmark from
Lemma~\ref{lem:relaxed-stationary-benchmark}. By
Lemma~\ref{lem:relaxed-stationary-benchmark}, the selected relaxed stationary
optimizer is cost feasible in expectation:
\[
\mathbb E[c(v_t,b_t^\star)]\le \rho .
\]
Moreover, \(\lambda_t\) is measurable with respect to the history before round
\(t\), while \((v_t,x_t)\) is a fresh sample independent of this history.
Therefore,
\[
\mathbb E\!\left[
\lambda_t
\bigl(\rho-c(v_t,b_t^\star)\bigr)
\mid
\mathcal F_{t-1}
\right]
=
\lambda_t\,
\mathbb E\!\left[
\rho-c(v_t,b_t^\star)
\right]
\ge
0 .
\]
Taking expectation gives
\[
\mathbb E\!\left[
\lambda_t
\bigl(\rho-c(v_t,b_t^\star)\bigr)
\right]
\ge
0 .
\]
Summing over \(t=1,\ldots,T\), we obtain
\[
\mathbb E\!\left[
\sum_{t=1}^T
\lambda_t
\bigl(\rho-c(v_t,b_t^\star)\bigr)
\right]
\ge
0 .
\]

Using the decomposition
\[
\lambda_t(\widehat c_t-c(v_t,b_t^\star))
=
\lambda_t(\widehat c_t-\rho)
+
\lambda_t(\rho-c(v_t,b_t^\star)),
\]
we have
\[
\sum_{t=1}^T
\lambda_t(\widehat c_t-c(v_t,b_t^\star))
=
\sum_{t=1}^T
\lambda_t(\widehat c_t-\rho)
+
\sum_{t=1}^T
\lambda_t(\rho-c(v_t,b_t^\star)).
\]
Taking expectations and applying inequality \((A)\), together with the
nonnegativity of the second term in expectation, yields
\[
\mathbb E\!\left[
\sum_{t=1}^T
\lambda_t(\widehat c_t-c(v_t,b_t^\star))
\right]
\ge
-C\eta T\bar v^2 .
\]
Equivalently,
\[
\mathbb E\!\left[
\sum_{t=1}^T
\lambda_t(c(v_t,b_t^\star)-\widehat c_t)
\right]
\le
C\eta T\bar v^2 .
\]
Taking
\[
\eta=T^{-1/2}
\]
gives
\[
\mathbb E\!\left[
\sum_{t=1}^T
\lambda_t(c(v_t,b_t^\star)-\widehat c_t)
\right]
\le
C\bar v^2\sqrt T .
\]
This proves the lemma.
\end{proof}

\section{Structured Multi-dimensional Algorithm and Proof}
\label{app:multi-dimensional-guarantee}

In this section, we provide the structured multi-dimensional extension used in
Section~\ref{sec:multi-extension}. The purpose of this extension is not to
handle arbitrary high-dimensional contexts. Instead, we focus on a setting where
the contexts lie on an ordered one-dimensional manifold embedded in
\(\mathbb R^d\). This preserves the ordering structure required by the
monotone active-set rule, while still allowing the highest competing bid to
depend on a full vector of contextual features.

Let the unknown contextual parameter be
\[
\boldsymbol{\alpha}\in\mathbb R^d,
\qquad d>1,
\]
and suppose that the highest competing bid follows
\[
d_t=\langle \boldsymbol{\alpha},\mathbf x_t\rangle+z_t,
\]
where \(z_t\) is i.i.d. noise drawn from the unknown distribution \(G\), and is
independent of the context.

A direct extension to arbitrary \(\mathbf x_t\in\mathbb R^d\) is not compatible
with the active-set construction used in the scalar model. In multiple
dimensions, the componentwise order is only a partial order, so many pairs of
contexts are incomparable. Moreover, \(f^{-1}(v)\) is generally not uniquely
defined. We therefore impose the following structured assumptions.

\begin{assumption}[Monotone contextual manifold and value invertibility]
\label{ass:monotone-manifold}
There exists a latent scalar variable \(s_t\in[0,1]\) and a mapping
\[
h:[0,1]\to\mathbb R^d
\]
such that
\[
\mathbf x_t=h(s_t).
\]
Each coordinate function \(h_j\) is strictly increasing on \([0,1]\). Moreover,
the value function \(f\) is strictly increasing in each coordinate on a
neighborhood of the contextual support
\[
\mathcal X=\{h(s):s\in[0,1]\}.
\]
Define
\[
\phi(s):=f(h(s)).
\]
Then \(\phi\) is strictly increasing on \([0,1]\), and hence invertible on its
range.
\end{assumption}

Assumption~\ref{ass:monotone-manifold} turns the componentwise partial order in
\(\mathbb R^d\) into a total order along the contextual manifold. Indeed, for
two contexts \(\mathbf x_i=h(s_i)\) and \(\mathbf x_j=h(s_j)\), either
\(s_i\le s_j\) or \(s_i>s_j\), and hence either
\(\mathbf x_i\le \mathbf x_j\) componentwise or
\(\mathbf x_i>\mathbf x_j\) componentwise. Since \(\phi\) is strictly
increasing, sorting observations by value is equivalent to sorting them by the
latent scalar \(s\). For each value grid point \(v^m\), we define the
representative context by
\[
\mathbf x^m:=h(\phi^{-1}(v^m)).
\]

\begin{assumption}[Superlinear growth on the contextual manifold]
\label{ass:manifold-superlinear}
For any \(s_1>s_2\),
\[
\phi(s_1)-\phi(s_2)
>
\langle
\boldsymbol{\alpha},h(s_1)-h(s_2)
\rangle .
\]
\end{assumption}

Assumption~\ref{ass:manifold-superlinear} is the multi-dimensional analogue of
Assumption~\ref{ass:Superlinear}, restricted to the contextual manifold. It
ensures that the value ordering dominates the contextual shift in the highest
competing bid.

\begin{assumption}[Multi-dimensional visibility of high competing bids]
\label{ass:multi-visible-quantile}
There exists a constant \(\Delta_q>0\) such that, for every \(s\in[0,1]\),
\[
\mathbb P\!\left(
d_t\ge f(h(s))+\Delta_q\mid \mathbf x_t=h(s)
\right)
\ge 1-p_0.
\]
Equivalently, under the model
\(d_t=\langle\boldsymbol{\alpha},h(s)\rangle+z_t\), this condition can be
written as
\[
\mathbb P\!\left(
z_t\ge
f(h(s))-\langle\boldsymbol{\alpha},h(s)\rangle+\Delta_q
\right)
\ge 1-p_0,
\qquad
\forall s\in[0,1].
\]
Provided that \(G^{-1}(p_0)\) lies in the interior of the support of \(G\), the
above condition implies
\[
G^{-1}(p_0)
\ge
f(h(s))-\langle\boldsymbol{\alpha},h(s)\rangle+\Delta_q,
\qquad
\forall s\in[0,1].
\]
\end{assumption}

Assumption~\ref{ass:multi-visible-quantile} is the multi-dimensional analogue
of Assumption~\ref{ass:visible-quantile}. It ensures that the highest
competing bid exceeds the learner's value by a fixed positive margin with
probability at least \(1-p_0=0.01\) along the contextual manifold. Therefore,
the \(p_0\)-quantile of the residual distribution remains visible under
one-sided feedback after replacing the scalar shift \(\alpha x\) by the
inner product \(\langle\boldsymbol{\alpha},\mathbf x\rangle\).

\begin{assumption}[Multi-dimensional identifiability]
\label{ass:multi-identifiability}
Let \(L\ge d+1\) be the number of ordered bins used by
Algorithm~\ref{alg:multi-quantile}. Let
\(\mathcal I_1,\ldots,\mathcal I_L\) denote the population ordered bins along
the latent variable \(s\). For any candidate parameter
\(\tilde{\boldsymbol{\alpha}}\), let
\(q_\ell^o(\tilde{\boldsymbol{\alpha}})\) denote the population \(p_0\)-quantile
of the uncensored residual
\[
d-\langle\tilde{\boldsymbol{\alpha}},\mathbf x\rangle
\]
conditional on \(s\in\mathcal I_\ell\). Define
\[
\bar q^o(\tilde{\boldsymbol{\alpha}})
=
\frac1L\sum_{\ell=1}^L q_\ell^o(\tilde{\boldsymbol{\alpha}}),
\]
and
\[
Q^o(\tilde{\boldsymbol{\alpha}})
=
\left[
\frac1L
\sum_{\ell=1}^L
\left(
q_\ell^o(\tilde{\boldsymbol{\alpha}})
-
\bar q^o(\tilde{\boldsymbol{\alpha}})
\right)^2
\right]^{1/2}.
\]
There exist constants \(\kappa>0\) and \(r_0>0\) such that, for all
\[
\|\tilde{\boldsymbol{\alpha}}-\boldsymbol{\alpha}\|_2\le r_0,
\]
we have
\[
Q^o(\tilde{\boldsymbol{\alpha}})-Q^o(\boldsymbol{\alpha})
\ge
\kappa
\|\tilde{\boldsymbol{\alpha}}-\boldsymbol{\alpha}\|_2.
\]
\end{assumption}

Assumption~\ref{ass:multi-identifiability} ensures that the ordered contextual
manifold contains enough variation to identify the full vector
\(\boldsymbol{\alpha}\). Comparing only two groups gives at most one balancing
condition, which is insufficient for \(d\)-dimensional identification. The
joint quantile-balancing objective instead compares residual quantiles across
\(L\ge d+1\) ordered bins.

\paragraph{Multi-dimensional quantile estimator.}
The following estimator generalizes Algorithm~\ref{alg:quantile}. It balances
residual quantiles jointly across multiple ordered context bins rather than
estimating each coordinate separately.

\begin{algorithm}[htbp]
\caption{Multi-dimensional Quantile-Balancing Estimator for \(\boldsymbol{\alpha}\)}
\label{alg:multi-quantile}
\begin{enumerate}
    \item \textbf{Input:} one-sided feedback samples
    \[
    \{(\mathbf{x}_i,v_i,d_i\mathbf{1}\{b_i\le d_i\},\mathbf{1}\{b_i\le d_i\})\}_{i=1}^n,
    \]
    local parameter set \(\mathcal A_0\subset\mathbb R^d\), quantile level
    \(p_0\), and number of ordered bins \(L\ge d+1\).

    \item Sort the samples according to their values \(v_i\), and partition the
    ordered list into \(L\) consecutive bins
    \[
    \mathcal I_1,\ldots,\mathcal I_L
    \]
    with sizes differing by at most one. Equivalently, under the monotone
    manifold assumption, this corresponds to partitioning the samples according
    to the latent order \(s_i\).

    \item For each candidate parameter
    \(\tilde{\boldsymbol{\alpha}}\in\mathcal A_0\):
    \begin{enumerate}
        \item Construct censored residuals
        \[
        R_i(\tilde{\boldsymbol{\alpha}})
        =
        \begin{cases}
        d_i-\langle \tilde{\boldsymbol{\alpha}},\mathbf{x}_i\rangle,
        & b_i\le d_i,\\
        -\infty,
        & b_i>d_i.
        \end{cases}
        \]

        \item For each bin \(\ell=1,\ldots,L\), compute the empirical
        \(p_0\)-quantile
        \[
        \hat q_\ell(\tilde{\boldsymbol{\alpha}})
        =
        \inf
        \left\{
        y:
        \frac{1}{|\mathcal I_\ell|}
        \sum_{i\in\mathcal I_\ell}
        \mathbf 1\{R_i(\tilde{\boldsymbol{\alpha}})\le y\}
        \ge p_0
        \right\}.
        \]

        \item Compute the average bin quantile
        \[
        \bar q(\tilde{\boldsymbol{\alpha}})
        =
        \frac1L
        \sum_{\ell=1}^L
        \hat q_\ell(\tilde{\boldsymbol{\alpha}}).
        \]

        \item Compute the joint quantile-balancing objective
        \[
        Q(\tilde{\boldsymbol{\alpha}})
        =
        \left[
        \frac1L
        \sum_{\ell=1}^L
        \left(
        \hat q_\ell(\tilde{\boldsymbol{\alpha}})
        -
        \bar q(\tilde{\boldsymbol{\alpha}})
        \right)^2
        \right]^{1/2}.
        \]
    \end{enumerate}

    \item Output
    \[
    \hat{\boldsymbol{\alpha}}
    =
    \arg\min_{\tilde{\boldsymbol{\alpha}}\in\mathcal A_0}
    Q(\tilde{\boldsymbol{\alpha}}).
    \]
\end{enumerate}
\end{algorithm}

\paragraph{Structured multi-dimensional bidding algorithm.}
The bidding algorithm is the same as Algorithm~\ref{alg:budget-bidding}, except
that the shifted coordinate is now
\[
\tilde b=b-\langle\boldsymbol{\alpha},\mathbf x\rangle,
\]
and the representative context for value bin \(v^m\) is
\(\mathbf x^m=h(\phi^{-1}(v^m))\).

\begin{algorithm}[htbp]
\caption{Bidding Algorithm for Structured Multi-dimensional Contextual First-Price Auctions with Budgets}
\label{alg:multi-budget-bidding}
\footnotesize
\setlength{\abovedisplayskip}{3pt}
\setlength{\belowdisplayskip}{3pt}
\setlength{\abovedisplayshortskip}{2pt}
\setlength{\belowdisplayshortskip}{2pt}
\begin{enumerate}
\setlength{\itemsep}{2pt}
\setlength{\parskip}{0pt}
\setlength{\parsep}{0pt}

    \item \textbf{Input and initialization.}
    Given \(T\), \(B=\rho T\), \(\delta\), \(\eta=T^{-1/2}\), and quantile
    level \(p_0\), construct the value grid, shifted-bid grid, active sets, and
    phases as defined above. Initialize \(\hat r_0=\hat c_0=0\),
    \(w_0^m=1\) for all \(m\), and \(B_1=B\).

    \item \textbf{Initial exploration.}
    For \(t\in T_0\), set \(b_t=0\) and observe \(d_t\). Estimate
    \(\widehat{\boldsymbol{\alpha}}_0\), initialize the local search set
    \(\mathcal A_0\) around \(\widehat{\boldsymbol{\alpha}}_0\), and set
    \(\lambda_{|T_0|}=0\).

    \item \textbf{Phases.}
    For each phase \(i=1,\ldots,n\), decisions in \(A_i\cup B_i\) use
    \(\widehat{\boldsymbol{\alpha}}_{i-1}\), \(\hat r_{i-1}\),
    \(\hat c_{i-1}\), and
    \(\{\mathcal B_{i-1}^m\}_{m=0}^M\).

    \begin{enumerate}
    \setlength{\itemsep}{1pt}
    \setlength{\parskip}{0pt}
    \setlength{\parsep}{0pt}

        \item For each round \(t\in A_i\cup B_i\), choose
        \[
        v^{m(t)}=\max\{u\in V:u\le v_t/(1+\lambda_t)\},
        \qquad
        \mathbf x^{m(t)}=h(\phi^{-1}(v^{m(t)})).
        \]

        \item Choose the smallest feasible active shifted bid
        \[
        \tilde b_t
        =
        \inf
        \left\{
        \tilde b\in\mathcal B_{i-1}^{m(t)}:
        0\le
        \tilde b+
        \langle
        \widehat{\boldsymbol{\alpha}}_{i-1},\mathbf x^{m(t)}
        \rangle
        \le v^{m(t)}
        \right\},
        \]
        submit
        \[
        b_t
        =
        \tilde b_t+
        \langle
        \widehat{\boldsymbol{\alpha}}_{i-1},\mathbf x^{m(t)}
        \rangle,
        \]
        and update
        \[
        \lambda_{t+1}
        =
        \max
        \left\{
        0,
        \lambda_t-\eta
        \bigl(
        \rho-\hat c_{i-1}(v^{m(t)},\tilde b_t)
        \bigr)
        \right\},
        \quad
        B_{t+1}
        =
        B_t-b_t\mathbf 1\{b_t>d_t\}.
        \]
        If \(B_{t+1}<\bar v\), terminate.

        \item At the end of \(A_i\), update
        \(\widehat{\boldsymbol{\alpha}}_i\) by
        Algorithm~\ref{alg:multi-quantile} using
        \[
        \{(\mathbf x_k,v_k,d_k\mathbf 1\{b_k\le d_k\},
        \mathbf 1\{b_k\le d_k\})\}_{k\in A_i}
        \]
        over \(\mathcal A_0\) with quantile level \(p_0\).

        \item At the end of \(B_i\), update \(\hat r_i,\hat c_i\). For each
        \(m\), first apply
        \[
        \mathcal B_{i-1}^m
        \leftarrow
        \left\{
        \tilde b^k\in\mathcal B_{i-1}^m:
        \tilde b^k\ge
        \max_{s<m}\inf\mathcal B_{i-1}^s
        \right\},
        \]
        then update
        \[
        \mathcal B_i^m
        \leftarrow
        \left\{
        \tilde b^k\in\mathcal B_{i-1}^m:
        \hat r_i(v^m,\tilde b^k)
        \ge
        \max_{\tilde b\in\mathcal B_{i-1}^m}
        \hat r_i(v^m,\tilde b)-2w_i^m
        \right\},
        \]
        where
        \[
        w_i^m
        =
        \sqrt{\frac{\log(T/\delta)}{N_i^m\vee1}},
        \qquad
        N_i^m
        =
        \min_{\tilde b^k\in\mathcal B_{i-1}^m}n_i^k.
        \]
    \end{enumerate}
\end{enumerate}
\end{algorithm}

\begin{lemma}[Multi-dimensional quantile-balancing error]
\label{lem:multi-estimation-error}
Fix a phase \(i\) and suppose \(|A_i|\ge \sqrt T\). Take \(L=O(d)\) ordered
bins in Algorithm~\ref{alg:multi-quantile}. Under
Assumptions~\ref{ass:lipschitz},
\ref{ass:multi-visible-quantile},
\ref{ass:monotone-manifold}, and
\ref{ass:multi-identifiability}, on the initial localization event, the
estimator in Algorithm~\ref{alg:multi-quantile} satisfies, with probability at
least \(1-\delta\),
\[
\|\widehat{\boldsymbol{\alpha}}_i-\boldsymbol{\alpha}\|_2
\le
C d\sqrt{\frac{\log(|A_i|/\delta)}{|A_i|}} .
\]
Here the constant \(C\) may depend on the local identifiability constant
\(\kappa^{-1}\), the density bounds, and the boundedness constants.
\end{lemma}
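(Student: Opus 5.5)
The plan is to mirror the scalar proof of Theorem~\ref{thm:main-estimation}, replacing the two-group gap by the joint objective \(Q\). The first step is a censoring-removal step analogous to Lemma~\ref{lem:quantile-invariance}. Every censored sample satisfies \(d_j<b_j\le v_j=f(h(s_j))\). Hence, for \(\tilde{\boldsymbol\alpha}\in\mathcal A_0\), its uncensored residual obeys
\[
d_j-\langle\tilde{\boldsymbol\alpha},\mathbf x_j\rangle\le f(h(s_j))-\langle\boldsymbol\alpha,h(s_j)\rangle+\|\tilde{\boldsymbol\alpha}-\boldsymbol\alpha\|_2\sup_s\|h(s)\|_2 .
\]
By Assumption~\ref{ass:multi-visible-quantile}, this bound lies at least \(\Delta_q\) minus a vanishing term below \(G^{-1}(p_0)\). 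The population bin quantile \(q^o_\ell(\tilde{\boldsymbol\alpha})\) is at least \(G^{-1}(p_0)-\|\tilde{\boldsymbol\alpha}-\boldsymbol\alpha\|_2\sup\|h\|\). Combining this with a concentration bound on empirical quantiles then shows, for \(T\) large, that \(\hat q_\ell(\tilde{\boldsymbol\alpha})=\hat q^o_\ell(\tilde{\boldsymbol\alpha})\) for all \(\ell\le L\) and \(\tilde{\boldsymbol\alpha}\in\mathcal A_0\). Here the bins are formed by sorting on \(v_i\), which by Assumption~\ref{ass:monotone-manifold} is the latent order of \(s_i\), and the initial localization radius \(\widetilde O(T^{-1/4})\) makes the perturbation term negligible.

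The second step extends Lemma~\ref{lem:uniform-quantile-concentration} to \(L\) bins and a \(d\)-dimensional parameter. Condition on the contexts, so that the bins are fixed and each has size about \(n/L\). The relevant class of sets \(\{z\le y-\langle\boldsymbol\alpha-\tilde{\boldsymbol\alpha},\mathbf x\rangle\}\) is a class of half-spaces in \((z,\mathbf x)\) with VC dimension \(O(d)\). A VC inequality with a union bound over the \(L\) bins, followed by inversion of the CDF using the density lower bound \(g_{\min}\), gives
\[
\sup_{\tilde{\boldsymbol\alpha}\in\mathcal A_0}\max_{\ell}|\hat q^o_\ell-q^o_\ell|\le \varepsilon_n:=C\sqrt{\frac{d\,L\log(n/\delta)}{n}} .
\]
With \(L=O(d)\), this is \(\varepsilon_n\le Cd\sqrt{\log(n/\delta)/n}\). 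This is the source of the factor \(d\), and some care is needed so that the \(d\)-dependence enters only as \(\sqrt{dL}\) and not worse. The map from a vector of quantiles to its empirical standard deviation is \(1\)-Lipschitz in the root-mean-square norm. Therefore \(|Q(\tilde{\boldsymbol\alpha})-Q^o(\tilde{\boldsymbol\alpha})|\le 2\varepsilon_n\) uniformly over \(\mathcal A_0\); the factor \(2\) is a safe constant that covers the centering by \(\bar q\).

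The third step is the argmin argument, exactly as in Steps 3--5 of the proof of Theorem~\ref{thm:main-estimation}. At \(\tilde{\boldsymbol\alpha}=\boldsymbol\alpha\) the residuals are pure noise \(z\), so all \(q^o_\ell(\boldsymbol\alpha)\) equal \(G^{-1}(p_0)\) and \(Q^o(\boldsymbol\alpha)=0\). Optimality of \(\hat{\boldsymbol\alpha}\) then gives \(Q(\hat{\boldsymbol\alpha})\le Q(\boldsymbol\alpha)\le 2\varepsilon_n\). Since \(\mathcal A_0\) lies in the \(r_0\)-ball for large \(T\), Assumption~\ref{ass:multi-identifiability} gives
\[
Q(\hat{\boldsymbol\alpha})\ge Q^o(\hat{\boldsymbol\alpha})-2\varepsilon_n\ge \kappa\|\hat{\boldsymbol\alpha}-\boldsymbol\alpha\|_2-2\varepsilon_n .
\]
Hence \(\|\hat{\boldsymbol\alpha}-\boldsymbol\alpha\|_2\le 4\varepsilon_n/\kappa\), which is the claim. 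Union-bounding the censoring and concentration events at level \(\delta/2\) each completes the argument.

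The main obstacle is the second step. Sorting by \(v_i\) produces data-dependent bins whose membership depends on the sample. Because the bins depend on the \(x_i\) but not on the \(z_i\), conditioning on all contexts makes them deterministic while keeping the \(z_i\) independent. The uniform-in-\(\tilde{\boldsymbol\alpha}\) VC bound must then be applied conditionally per bin, and the \(\sqrt{dL}\) scaling must be tracked carefully.
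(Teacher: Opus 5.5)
Your proposal is correct at the paper's level of rigor and shares the paper's skeleton: censoring removal via the margin $\Delta_q$, a uniform bound on the bin-quantile deviations, and the sandwich $\kappa\|\hat{\boldsymbol\alpha}-\boldsymbol\alpha\|_2-2\varepsilon_n\le Q(\hat{\boldsymbol\alpha})\le Q(\boldsymbol\alpha)\le 2\varepsilon_n$. Where you differ is in how you obtain the uniform bound.

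The paper's proof of this lemma does not use a VC class. It takes an $\varepsilon$-net $\mathcal N_\varepsilon$ of $\mathcal A_0$ with $|\mathcal N_\varepsilon|\le (C/\varepsilon)^d$ and applies fixed-parameter quantile concentration at each net point and bin. It then union-bounds, so $d$ enters via $\log|\mathcal N_\varepsilon|\approx d\log n$ and $L$ via the bin size $n/L$. Finally it extends to all of $\mathcal A_0$ deterministically: moving $\tilde{\boldsymbol\alpha}$ by $\varepsilon$ shifts every residual, hence every empirical and population quantile, by at most $\bar x\varepsilon$, and it takes $\varepsilon=n^{-1}$.

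You instead lift Lemma~\ref{lem:uniform-quantile-concentration}. A VC bound for the half-spaces $\{z\le y-\langle\boldsymbol\alpha-\tilde{\boldsymbol\alpha},\mathbf x\rangle\}$ controls the CDFs uniformly in $(\tilde{\boldsymbol\alpha},y)$ at once, and the $g_{\min}$ inversion turns this into quantile control. Both routes give $\sqrt{dL\log(n/\delta)/n}=O(d\sqrt{\log(n/\delta)/n})$.

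The net argument needs only pointwise concentration plus an elementary perturbation bound. Your route avoids discretization and makes explicit where the density lower bound is used; the paper folds it into ``standard empirical quantile concentration''. It also yields CDF-level uniformity that the censoring step can reuse, and it handles the data-dependent bins more carefully by conditioning on the contexts. The cost is a VC inequality for independent, non-identically distributed summands.

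One point neither argument spells out. After conditioning, the centering is the quantile of the mixture $|\mathcal I_\ell|^{-1}\sum_{j\in\mathcal I_\ell}G(y-\langle\boldsymbol\alpha-\tilde{\boldsymbol\alpha},\mathbf x_j\rangle)$ over sample bins, whereas Assumption~\ref{ass:multi-identifiability} refers to population bins. The discrepancy is only $\|\tilde{\boldsymbol\alpha}-\boldsymbol\alpha\|_2$ times an empirical-process fluctuation over contexts and bin boundaries, hence lower order. Still, a line bridging the two would make your third step airtight.
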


\begin{proof}
Let \(n_i=|A_i|\). The proof follows the same steps as the scalar quantile
estimator. The only difference is that the two scalar context groups are
replaced by \(L=O(d)\) ordered bins along the latent one-dimensional manifold.

First, by Assumption~\ref{ass:monotone-manifold}, the contexts satisfy
\(\mathbf x=h(s)\) and \(\phi(s)=f(h(s))\) is strictly increasing. Therefore
sorting the samples by their values \(v_i\) is equivalent to sorting them by the
latent variable \(s_i\). Since Algorithm~\ref{alg:multi-quantile} partitions
the ordered list into \(L\) consecutive bins with sizes differing by at most
one, each bin has size
\[
|\mathcal I_\ell|\ge \lfloor n_i/L\rfloor .
\]

We first reduce the censored problem to the uncensored one. For a candidate
\(\tilde{\boldsymbol{\alpha}}\in\mathcal A_0\), define the uncensored residual
\[
R_j^o(\tilde{\boldsymbol{\alpha}})
=
d_j-\langle \tilde{\boldsymbol{\alpha}},\mathbf x_j\rangle .
\]
By the same visible-quantile argument as in
Lemma~\ref{lem:quantile-invariance}, the empirical \(p_0\)-quantile computed from
the censored residuals coincides with the empirical \(p_0\)-quantile computed from
the uncensored residuals, uniformly over
\(\tilde{\boldsymbol{\alpha}}\in\mathcal A_0\) and over all bins. The only
change from the scalar proof is that the residual shift is
\[
\langle \tilde{\boldsymbol{\alpha}}-\boldsymbol{\alpha},\mathbf x\rangle
\]
instead of \((\tilde\alpha-\alpha)x\). Since
\[
|\langle \tilde{\boldsymbol{\alpha}}-\boldsymbol{\alpha},\mathbf x\rangle|
\le
\|\tilde{\boldsymbol{\alpha}}-\boldsymbol{\alpha}\|_2
\|\mathbf x\|_2
\le
\bar x
\|\tilde{\boldsymbol{\alpha}}-\boldsymbol{\alpha}\|_2,
\]
the same localization and visibility-margin argument applies. Thus it suffices
to analyze the empirical quantiles of the uncensored residuals.

For each bin \(\ell\), let
\(q_\ell^o(\tilde{\boldsymbol{\alpha}})\) be the population \(p_0\)-quantile of
\[
d-\langle \tilde{\boldsymbol{\alpha}},\mathbf x\rangle
\]
conditional on the sample belonging to bin \(\mathcal I_\ell\). At the true
parameter,
\[
d-\langle \boldsymbol{\alpha},\mathbf x\rangle=z,
\]
whose distribution is independent of the bin. Hence all bin-wise population
quantiles coincide and
\[
Q^o(\boldsymbol{\alpha})=0.
\]
By Assumption~\ref{ass:multi-identifiability}, for every
\(\tilde{\boldsymbol{\alpha}}\) in the local neighborhood,
\[
Q^o(\tilde{\boldsymbol{\alpha}})-Q^o(\boldsymbol{\alpha})
\ge
\kappa
\|\tilde{\boldsymbol{\alpha}}-\boldsymbol{\alpha}\|_2 .
\]

It remains to control the deviation between the empirical and population
quantile-balancing objectives uniformly over
\(\tilde{\boldsymbol{\alpha}}\in\mathcal A_0\). This is the same discretization
step as in the scalar proof, except that the local parameter set is now
\(d\)-dimensional. Take an \(\varepsilon\)-net
\(\mathcal N_\varepsilon\) of \(\mathcal A_0\) under the Euclidean norm. Since
\(\mathcal A_0\subset\mathbb R^d\) is bounded, we may choose the net so that
\[
|\mathcal N_\varepsilon|\le (C/\varepsilon)^d .
\]
For a fixed net point \(\boldsymbol{\beta}\in\mathcal N_\varepsilon\) and a
fixed bin \(\ell\), standard empirical quantile concentration gives
\[
|\hat q_\ell(\boldsymbol{\beta})-q_\ell^o(\boldsymbol{\beta})|
\le
C\sqrt{
\frac{
\log(|\mathcal N_\varepsilon|L/\delta)
}{
|\mathcal I_\ell|
}
}
\]
with probability at least \(1-\delta/(|\mathcal N_\varepsilon|L)\). Taking a
union bound over all \(\boldsymbol{\beta}\in\mathcal N_\varepsilon\) and all
\(\ell=1,\ldots,L\), and using \(|\mathcal I_\ell|\ge c n_i/L\), we get
\[
\max_{\boldsymbol{\beta}\in\mathcal N_\varepsilon}
\max_{1\le \ell\le L}
|\hat q_\ell(\boldsymbol{\beta})-q_\ell^o(\boldsymbol{\beta})|
\le
C\sqrt{
\frac{
L\{d\log(C/\varepsilon)+\log(L/\delta)\}
}{
n_i
}
}.
\]

We now extend the bound from the net to all
\(\tilde{\boldsymbol{\alpha}}\in\mathcal A_0\). For any
\(\tilde{\boldsymbol{\alpha}}\in\mathcal A_0\), take
\(\boldsymbol{\beta}\in\mathcal N_\varepsilon\) with
\[
\|\tilde{\boldsymbol{\alpha}}-\boldsymbol{\beta}\|_2\le\varepsilon.
\]
For every sample,
\[
\left|
\langle \tilde{\boldsymbol{\alpha}}-\boldsymbol{\beta},\mathbf x_j\rangle
\right|
\le
\bar x\varepsilon .
\]
Therefore the residuals under \(\tilde{\boldsymbol{\alpha}}\) and
\(\boldsymbol{\beta}\) differ uniformly by at most \(\bar x\varepsilon\). This
implies that both the empirical and population bin quantiles differ by at most
\(C\bar x\varepsilon\). Choosing \(\varepsilon=n_i^{-1}\), this net-extension
error is negligible compared with the statistical term. Since \(L=O(d)\), we
obtain
\[
\sup_{\tilde{\boldsymbol{\alpha}}\in\mathcal A_0}
\max_{1\le \ell\le L}
|\hat q_\ell(\tilde{\boldsymbol{\alpha}})
-
q_\ell^o(\tilde{\boldsymbol{\alpha}})|
\le
C d\sqrt{\frac{\log(n_i/\delta)}{n_i}} .
\]

The quantile-balancing objective is Lipschitz in the vector of bin quantiles:
if two quantile vectors differ coordinatewise by at most \(\epsilon\), then the
corresponding values of \(Q\) differ by at most \(C\epsilon\). Hence
\[
\sup_{\tilde{\boldsymbol{\alpha}}\in\mathcal A_0}
|\hat Q(\tilde{\boldsymbol{\alpha}})
-
Q^o(\tilde{\boldsymbol{\alpha}})|
\le
C d\sqrt{\frac{\log(n_i/\delta)}{n_i}} .
\]

Finally, since \(\widehat{\boldsymbol{\alpha}}_i\) minimizes the empirical
objective,
\[
\hat Q(\widehat{\boldsymbol{\alpha}}_i)
\le
\hat Q(\boldsymbol{\alpha}).
\]
Thus
\[
Q^o(\widehat{\boldsymbol{\alpha}}_i)
\le
Q^o(\boldsymbol{\alpha})
+
2\sup_{\tilde{\boldsymbol{\alpha}}\in\mathcal A_0}
|\hat Q(\tilde{\boldsymbol{\alpha}})
-
Q^o(\tilde{\boldsymbol{\alpha}})|
\le
C d\sqrt{\frac{\log(n_i/\delta)}{n_i}} .
\]
Combining this with the identifiability condition in
Assumption~\ref{ass:multi-identifiability} yields
\[
\|\widehat{\boldsymbol{\alpha}}_i-\boldsymbol{\alpha}\|_2
\le
C d\sqrt{\frac{\log(n_i/\delta)}{n_i}} .
\]
This proves the lemma.
\end{proof}
\paragraph{Proof of Proposition~\ref{prop:multi-dimensional-guarantee}.}

We now prove Proposition~\ref{prop:multi-dimensional-guarantee}, which states
that under Assumptions~\ref{ass:lipschitz},
\ref{ass:bounded},
\ref{ass:monotone-manifold},
\ref{ass:manifold-superlinear},
\ref{ass:multi-visible-quantile}, and
\ref{ass:multi-identifiability},
Algorithm~\ref{alg:multi-budget-bidding} satisfies
\[
\mathrm{Regret}(\pi)=\widetilde O(d\sqrt T).
\]
\begin{proof}
The proof follows the regret decomposition of
Theorem~\ref{thm:main-regret}. We only describe the modifications required by
the structured multi-dimensional context model.

First, the shifted coordinate is replaced by
\[
\tilde b=b-\langle\boldsymbol{\alpha},\mathbf x\rangle .
\]
Accordingly, the ideal population reward and cost targets become
\[
r_{\boldsymbol{\alpha}}(v,\tilde b;\mathbf x)
=
\bigl(v-(\tilde b+\langle\boldsymbol{\alpha},\mathbf x\rangle)\bigr)G(\tilde b),
\qquad
c_{\boldsymbol{\alpha}}(v,\tilde b;\mathbf x)
=
(\tilde b+\langle\boldsymbol{\alpha},\mathbf x\rangle)G(\tilde b).
\]
The one-sided observability argument for the shifted-coordinate estimators is
unchanged after this replacement.

Second, the representative context for each value bin is still well defined.
Under Assumption~\ref{ass:monotone-manifold},
\(\phi(s)=f(h(s))\) is strictly increasing. Hence each value grid point \(v^m\)
corresponds to the unique representative context
\[
\mathbf x^m=h(\phi^{-1}(v^m)).
\]
Therefore the discretization and active-set construction remain
one-dimensional along the latent ordering, rather than requiring a full grid in
\(\mathbb R^d\).

Third, the scalar estimation error \(|\hat\alpha_i-\alpha|\) is replaced by the
Euclidean error
\(\|\widehat{\boldsymbol{\alpha}}_i-\boldsymbol{\alpha}\|_2\). By
Lemma~\ref{lem:multi-estimation-error}, with high probability,
\[
\|\widehat{\boldsymbol{\alpha}}_i-\boldsymbol{\alpha}\|_2
\le
C d\sqrt{\frac{\log(|A_i|/\delta)}{|A_i|}} .
\]
Consequently, using the doubling phase schedule and
\(|A_i\cup B_i|=O(|A_i|)\), the contribution of phase \(i\) is bounded by
\[
|A_i\cup B_i|\cdot
d\sqrt{\frac{\log(|A_i|/\delta)}{|A_i|}}
=
\widetilde O(d\sqrt{|A_i|}).
\]
Summing over the doubling phases gives
\[
\sum_{i=1}^n
\sum_{t\in A_i\cup B_i}
\|\widehat{\boldsymbol{\alpha}}_{i-1}-\boldsymbol{\alpha}\|_2
\le
\widetilde O(d\sqrt T).
\]

The confidence representation changes only through this parameter error. The
martingale concentration part is identical to
Lemma~\ref{lem:confidence-representation}. The comparison between the plug-in
target and the ideal target now uses
\[
|\langle
\widehat{\boldsymbol{\alpha}}_i-\boldsymbol{\alpha},
\mathbf x
\rangle|
\le
\|\widehat{\boldsymbol{\alpha}}_i-\boldsymbol{\alpha}\|_2
\|\mathbf x\|_2
\le
\bar x
\|\widehat{\boldsymbol{\alpha}}_i-\boldsymbol{\alpha}\|_2 .
\]
Therefore the analogue of Lemma~\ref{lem:confidence-representation} is
\[
|\hat r_i-r_{\boldsymbol{\alpha}}|
+
|\hat c_i-c_{\boldsymbol{\alpha}}|
\le
Cw_i^m
+
C\bar x
\|\widehat{\boldsymbol{\alpha}}_i-\boldsymbol{\alpha}\|_2 .
\]

The active-set and rank-counting arguments also carry over. Although arbitrary
high-dimensional contexts are only partially ordered,
Assumption~\ref{ass:monotone-manifold} restricts the contexts to the monotone
curve \(\mathbf x=h(s)\), and ordering by value is equivalent to ordering by
the latent scalar \(s\). Together with
Assumption~\ref{ass:manifold-superlinear} and the same monotone active-set
filtering rule, the multi-dimensional residual-monotonicity lemma implies that
the selected shifted bids are nondecreasing in this order. Hence the same rank
lower bound and confidence-sum bound as in
Lemma~\ref{lem:confidence-sum-bound} apply:
\[
\sum_{i=1}^n
\sum_{t\in A_i\cup B_i}
w_{i-1}^{m(t)}
\le
\widetilde O(\sqrt T).
\]

Finally, the Lagrangian relaxation, dual update, stopping-time argument, and
budget-feasibility analysis do not depend on the dimension of the context.
Plugging the multi-dimensional confidence bound and the estimation-error sum
above into the same regret decomposition as in
Theorem~\ref{thm:main-regret} yields
\[
\operatorname{Regret}(\pi)
\le
\widetilde O(\sqrt T)
+
\widetilde O(d\sqrt T)
=
\widetilde O(d\sqrt T),
\]
up to the same boundedness and dual constants as in
Theorem~\ref{thm:main-regret}. This proves the proposition.
\end{proof}

\section{Additional Details for Numerical Results}
\label{app:numerical-details}

This appendix supplements the numerical results reported in
Section~\ref{sec:numerical-results}. The main text presents two figures:
one for the one-dimensional contextual model and one for the structured
multi-dimensional extension. Here we provide the simulation parameters used to
generate these two figures. We also include an additional normalized-regret plot and a robustness experiment under alternative private-value distributions.
\paragraph{Common experimental settings.}
Across all experiments in the main text, the horizon values are
\[
T\in\{1000,2000,5000,10000,15000,20000\}.
\]
The total budget is
\[
B=\rho T,
\qquad
\rho=0.1,
\]
and the value upper bound is
\[
\bar v=1.
\]
Each data point is averaged over \(30\) independent repetitions. The quantile
level used in the quantile-balancing estimator is
\[
p_0=0.99.
\]
The bid grid size used in the numerical implementation is \(80\).

The regret is computed with respect to an oracle benchmark that knows the true
contextual parameter and the true noise distribution. In all plots, the reported
quantity is the average regret over independent repetitions.

\paragraph{Algorithms compared in the one-dimensional experiment.}
The one-dimensional experiment in the main text compares three algorithms:
\begin{itemize}
    \item \emph{Contextual Budgeted Bidding}: the proposed contextual algorithm
    with quantile-balancing estimation.
    \item \emph{Non-contextual Budgeted Baseline}: a budgeted bidding algorithm
    that ignores contextual dependence in the competing bid distribution.
    \item \emph{Naive Losing-only OLS}: a contextual baseline that estimates the
    contextual coefficient by ordinary least squares using only losing rounds,
    where \(d_t\) is observed.
\end{itemize}

The comparison with the naive losing-only OLS baseline is included to illustrate
the selection bias caused by one-sided feedback. Since the highest competing bid
\(d_t\) is observed only when the learner loses, directly regressing on losing
samples does not generally recover the true contextual coefficient.

\paragraph{Parameters for the one-dimensional figure in the main text.}
The one-dimensional contextual model is
\[
d_t=\alpha x_t+z_t,
\qquad
\alpha=0.8.
\]

For the theory-aligned setting, contexts are generated as
\[
x_t\sim \mathcal U(0,1),
\]
and the private value is
\[
v_t=f(x_t)=0.1+0.9x_t.
\]
The noise distribution is
\[
z_t\sim \mathcal U(0.15,0.35).
\]
This setting is aligned with the structural assumptions used in the theoretical
analysis. In particular, the value function grows faster than the contextual
shift in competing bids, since the slope of \(f\) is \(0.9\), while
\(\alpha=0.8\).

For the robustness setting, contexts are again generated as
\[
x_t\sim \mathcal U(0,1),
\]
but the private value is
\[
v_t=\min\{0.4\sqrt{x_t}+0.1,\bar v\}.
\]
The noise is generated as
\[
z_t=\max\{N(0.1,0.1^2),0\}.
\]
This setting is included to test the empirical robustness of the algorithm
outside the exact sufficient conditions used in the theory.

\paragraph{Algorithms compared in the structured multi-dimensional experiment.}
The structured multi-dimensional experiment compares the proposed structured
multi-dimensional contextual bidding algorithm with the corresponding
non-contextual budgeted bidding baseline. The structured contextual algorithm
uses the one-dimensional latent ordering of the contexts to preserve the
monotone active-set construction.

\paragraph{Parameters for the structured multi-dimensional figure in the main text.}
The structured multi-dimensional model is
\[
d_t=\langle \alpha,x_t\rangle+z_t,
\qquad
x_t\in\mathbb R^2.
\]

For the theory-aligned structured setting, we generate a latent scalar
\[
s_t\sim\mathcal U(0,1),
\]
and set
\[
x_t=h(s_t)=(s_t,s_t^2).
\]
The contextual parameter is
\[
\alpha=(0.25,0.15)^\top.
\]
The value function along the manifold is
\[
v_t=\phi(s_t)=0.10+0.30s_t+0.20s_t^2.
\]
Equivalently, on the contextual manifold,
\[
v_t=f(x_t)=0.10+0.30x_{t1}+0.20x_{t2}.
\]
The noise distribution is
\[
z_t\sim\mathcal U(0.05,0.35).
\]

For the structured multi-dimensional robustness setting, the contextual
parameter is
\[
\alpha=(0.45,0.35)^\top.
\]
The two context coordinates are generated independently:
\[
x_{t1},x_{t2}\sim\mathcal U(0,1),
\]
and the private value is
\[
v_t=
\min\left\{
0.40\sqrt{\frac{x_{t1}+x_{t2}}{2}}+0.10,
\bar v
\right\}.
\]
The noise is
\[
z_t=\max\{N(0.1,0.1^2),0\}.
\]
This setting is not restricted to the monotone one-dimensional manifold and is
therefore used only as an empirical robustness check.

\paragraph{Additional normalized-regret figure.}
In addition to the cumulative-regret figures shown in the main text, we include
the normalized regret
\[
\frac{\mathrm{Regret}(T)}{\sqrt T}.
\]
This plot is intended to illustrate whether the empirical growth of regret is
consistent with the predicted \(\widetilde O(\sqrt T)\) scaling. A relatively
stable normalized-regret curve provides numerical evidence for the
square-root-type regret behavior.

\begin{figure}[htbp]
    \centering
    \includegraphics[
        width=0.85\linewidth,
        trim={1.5cm 7cm 1cm 7.0cm},
        clip
    ]{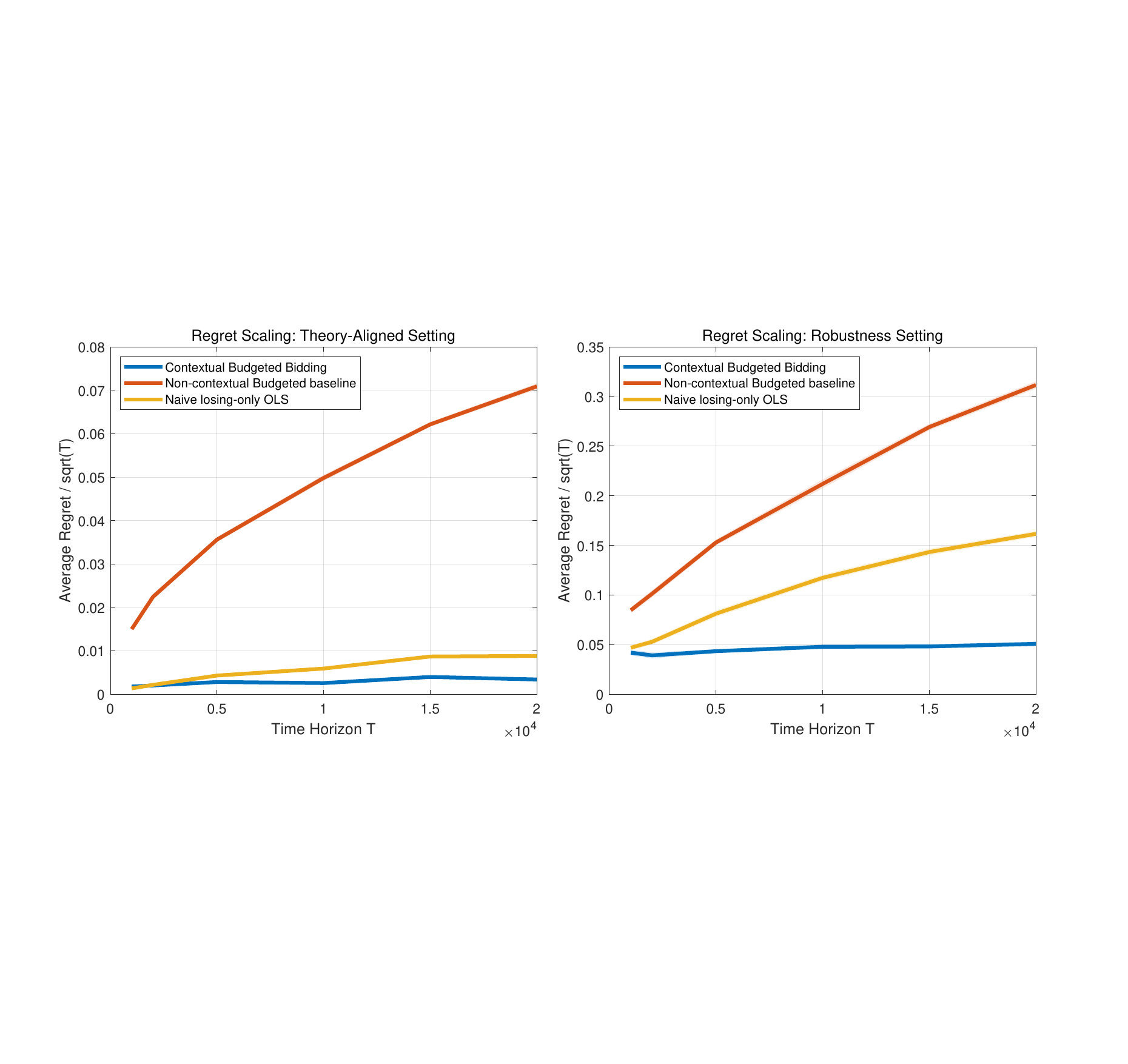}
    \caption{Normalized regret, \(\mathrm{Regret}(T)/\sqrt T\), in the
    one-dimensional contextual setting.}
    \label{fig:onedim-scaling-app}
\end{figure}

\paragraph{Additional experiments with alternative private-value distributions.}
We also consider additional one-dimensional experiments in which the private
values are sampled directly from alternative distributions, rather than being
generated as deterministic functions of the context. Specifically, we consider
normal and log-normal private-value distributions:
\[
v_t\sim N(0.6,0.1^2),
\]
truncated to \([0,\bar v]\), and
\[
\log v_t\sim N(-0.4,0.1^2),
\]
with \(v_t\) again truncated to \([0,\bar v]\).

These experiments are designed to test whether the proposed bidding algorithm
continues to perform well under value distributions that are not exactly covered
by the sufficient structural assumptions. The resulting regret curves are
reported in Figure~\ref{fig:value-distribution-regret-app}.

\begin{figure}[htbp]
    \centering
    \includegraphics[
        width=0.85\linewidth,
        trim={1.5cm 7cm 1cm 7.0cm},
        clip
    ]{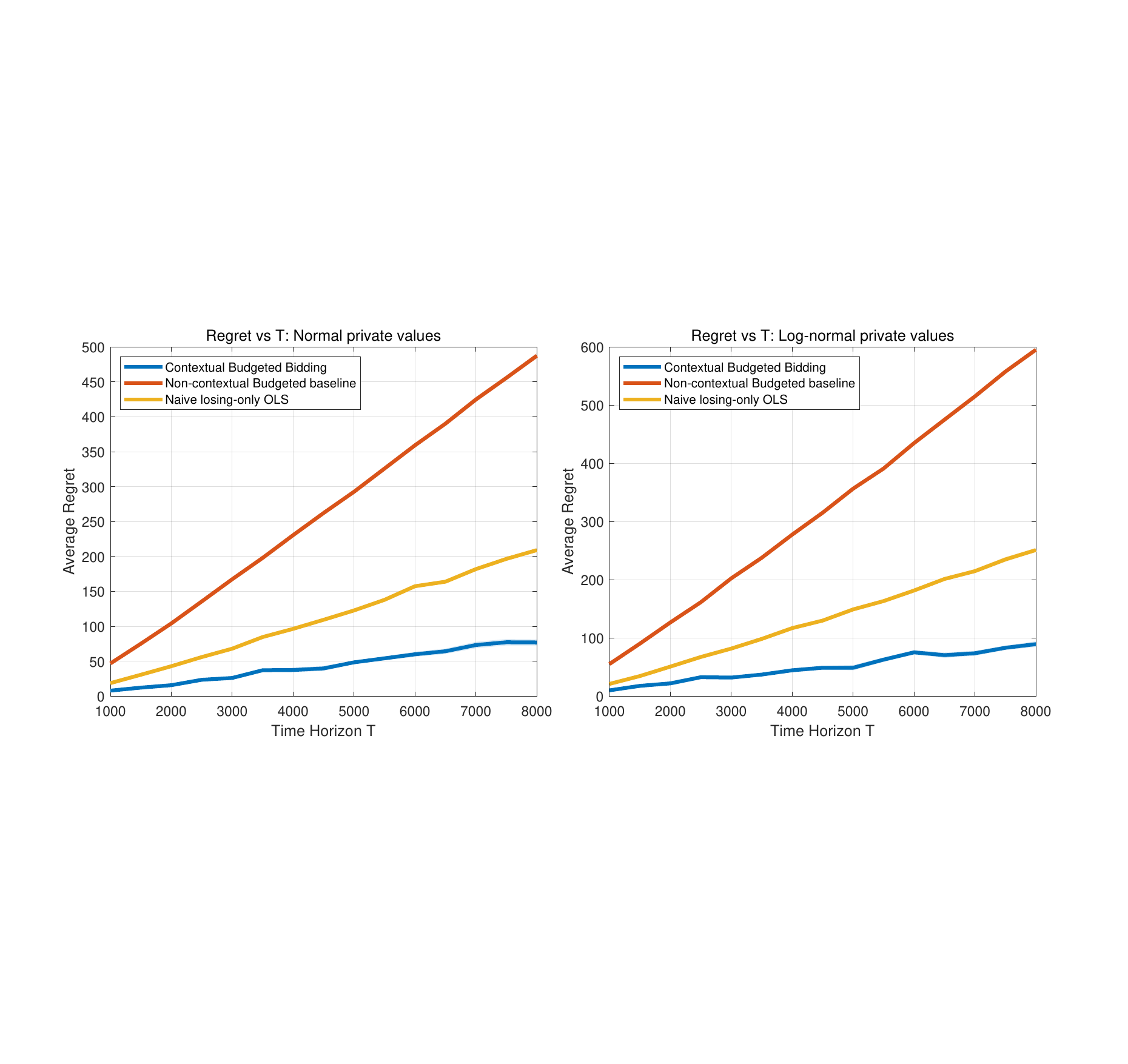}
    \caption{Regret comparison under alternative private-value distributions.
    The left panel reports results for truncated normal private values, and the
    right panel reports results for truncated log-normal private values.}
    \label{fig:value-distribution-regret-app}
\end{figure}
\end{APPENDICES}
\end{document}